\numberwithin{equation}{section}
\pgfplotsset{compat=newest}
\title{Diffusion Tensor Regularization with Metric Double Integrals}
\author{Leon Frischauf$^1$\\{\footnotesize\href{mailto:leon.frischauf@univie.ac.at}{leon.frischauf@univie.ac.at}}
\and Melanie Melching$^1$\\{\footnotesize\href{mailto:melanie.melching@univie.ac.at}{melanie.melching@univie.ac.at}}
\and Otmar Scherzer$^{1,2}$\\{\footnotesize\href{mailto:otmar.scherzer@univie.ac.at}{otmar.scherzer@univie.ac.at}}}
\date{}
\titleformat{\section}[block]{\large\sc\filcenter}{\thesection.}{0.5ex}{}[]
\titleformat{\subsection}[runin]{\bf}{\thesubsection.}{0.5ex}{}[.]
\newtheorem{lemma}{Lemma}[section]
\newaliascnt{proposition}{lemma}
\newaliascnt{corollary}{lemma}
\newtheorem{corollary}[corollary]{Corollary}
\newaliascnt{theorem}{lemma}
\newtheorem{theorem}[theorem]{Theorem}
\newaliascnt{definition}{lemma}
\newtheorem{definition}[definition]{Definition}
\newaliascnt{assumption}{lemma}
\newtheorem{assumption}[assumption]{Assumption}
\newaliascnt{remark}{lemma}
\newtheorem{remark}[remark]{Remark}
\theoremstyle{nonumberplain}
\newtheorem{proof}{Proof}
\newcommand{\N}{\mathds{N}}
\newcommand{\R}{\mathds{R}}
\let\RE\Re
\let\Re=\undefined
\DeclareMathOperator{\Re}{\RE e}
\let\IM\Im
\let\Im=\undefined
\DeclareMathOperator{\Im}{\IM m}
\newcommand{\abs}[1]{\left|#1\right|}
\newcommand{\norm}[1]{\left\|#1\right\|}
\newcommand{\set}[1]{\left\{#1\right\}}
\newcommand{\e}{\mathrm e}
\let\ii\i
\renewcommand{\i}{\mathrm i}
\renewcommand{\d}{d}
\newcommand{\ve}{\varepsilon}
\newcommand{\rarr}{\rightarrow}
\newcommand{\vd}{w^\delta}
\newcommand{\defeq}{\vcentcolon=}
\newcommand{\eqdef}{=\vcentcolon}
\newcommand{\enorm}[1]{\left|#1\right|}
\newcommand{\dx}{\mathrm{d}x}
\newcommand{\dxy}{\mathrm{d}(x,y)}
\DeclareMathOperator*{\argmin}{arg\,min}
\newcommand{\Log}{\mathrm{Log}}
\newcommand{\Exp}{\mathrm{Exp}}
\newcommand{\SPD}{\mathrm{SPD}}
\newcommand{\SPDs}{\SPD^{\mathrm{spec}}}
\newcommand{\SPDl}{\SPD^{\mathrm{Log}}}
\newcommand{\diag}{\mathrm{diag}}
\newcommand{\SYM}{\mathrm{SYM}}
\newcommand{\dRM}{\d_{\R^{m \times m}}}
\newcommand{\F}{\mathcal{F}}
\newcommand{\Ft}{\tilde{\mathcal{F}}}
\newcommand{\Reg}{\Phi}
\newcommand{\Proj}{\mathrm{P}}
\begin{document}

\maketitle
\thispagestyle{empty}
\begin{center}
\hspace*{5em}
\parbox[t]{12em}{\footnotesize
\hspace*{-1ex}$^1$Faculty of Mathematics\\
University of Vienna\\
Oskar-Morgenstern-Platz 1\\
A-1090 Vienna, Austria}
\hfil
\parbox[t]{17em}{\footnotesize
\hspace*{-1ex}$^2$Johann Radon Institute for Computational\\
\hspace*{1em}and Applied Mathematics (RICAM)\\
Altenbergerstraße 69\\
A-4040 Linz, Austria}
\end{center}


\begin{abstract}
In this paper we propose a variational regularization method for denoising and inpainting of diffusion tensor magnetic resonance images. We consider these images as manifold-valued Sobolev functions, i.e. in an infinite dimensional setting, which are defined appropriately. The regularization functionals are defined as double integrals, which are equivalent to Sobolev semi-norms in the Euclidean setting. We extend the analysis of \fullcite{CiaMelSch19} concerning stability and convergence of the variational regularization methods by a uniqueness result, apply them to diffusion tensor processing, and validate our model in numerical examples with synthetic and real data. 
\end{abstract}

\section{Introduction}
\label{sec:Intro}
In this paper we investigate denoising and inpainting of \emph{diffusion tensor (magnetic resonance) images} (DTMRI) with 
a derivative-free, non-local variational regularization technique proposed, implemented and 
analyzed first in \cite{CiaMelSch19}.

The proposed regularization functionals generalize equivalent definitions of the Sobolev semi-norms, which have been derived in the fundamental work of \cite{BouBreMir01} 
and follow up work \cite{Dav02,Pon04b}. These papers provide a derivative-free representation of Sobolev semi-norms for intensity and vector-valued functions. The beauty of this representation is that it allows for a straight forward definition of Sobolev energies of \emph{manifold-valued data} (see \cite{CiaMelSch19}), without requiring difficult differential geometric concepts (as 
for instance in \cite{GiaMuc06,Con17}).

Diffusion tensor images are considered to be re-presentable 
as functions from an image domain $\Omega \subset \R^n$, with $n=2,3,$ respectively, into the manifold of 
symmetric, positive definite matrices in $\R^{m \times m}$, denoted by $K$ in the following - 
for DTMRI images $m=3$. 
Therefore, they are ideal objects to check the efficiency of the proposed regularization techniques. 
A measured diffusion tensor is often very noisy and post-processing steps for noise removal are important. Even more, due to the noise it is possible that a measured tensor has negative eigenvalues, which is not physical, and thus often the whole tensor at this point is omitted, leading to incomplete data. Then the missing information has to be inpainted before visualization.

Variational regularization of vector- and matrix- manifold-valued functions has been 
considered before, for instance in \cite{WeiBro02,TscDer02,TscDer05,BacBerSteWei16,BerChaHiePerSte16,LelStrKoeCre13,WeiDemSto14, OstWan20} and \cite{BerFitPerSte18, BreHolStoWei18, Pen19}. Non-local regularization formulations were studied for example in \cite{BerTen17,LauNikPerSte17} for filtering tensor-valued functions, and also \cite{GilOsh08,LelPapSchSpe15} for filtering intensity images.
An overview of diffusion and regularization techniques for vector-, and matrix-valued data is given in \cite{WeiBro02}. 

Variational methods for denoising and inpainting attempt to find a good compromise between 
matching some given noisy, tensor-valued data $\vd:\Omega \rarr K$ and prior 
information on the desired solution $w^0:\Omega \rarr K$, also called noise free or ideal solution.
The choice of prior knowledge on $w^0$ is that 
\begin{enumerate}
	\item it is an element of the \emph{set} $W^{s,p}(\Omega; K)$, with a metric $d$ on $K$, the set of positive definite matrices, which is a subset of the fractional Sobolev space    
	 $W^{s,p}(\Omega; \R^{m \times m})$, with $s \in (0,1)$ and $p \in (1, \infty)$, and
	\item that 
	 \begin{equation} \label{eq:phi}\boxed{
	   \Reg^l_{[\d]}(w) \defeq 
				\int_{\Omega\times \Omega} \frac{\d^p(w(x), w(y))}{\enorm{x-y}^{n+p s}} \rho^l(x-y) \dxy}
	\end{equation} 
    is relatively small. The function $\rho$ is a non-negative and radially symmetric mollifier with an on-off indicator $l \in \{0,1\}$ denoting 
    whether the mollifier is used or not. Note, that in case that $\d = \dRM$ is the Euclidean metric and if we choose in addition $l=0$,  
    $\Reg_{[\dRM]}^0 $ becomes the fractional Sobolev semi-norm. 
\end{enumerate} 
The compromise of approximating $\vd$ with a function in $W^{s,p}(\Omega; K)$ with a small energy term $\Reg^l_{[\d]}(w)$ is achieved by minimization of the functional
\begin{equation} \label{eq:reg}\boxed{
		\begin{aligned} 
			\F_{[\d]}^{\alpha,\vd}(w) \defeq
			& \int_\Omega \chi_{\Omega \setminus D}(x) \d^p(w(x),\vd(x)) \dx + 
			\alpha \Reg_{[\d]}^l (w),
	\end{aligned}}
\end{equation}

where the parameter $\alpha > 0$ determines the preference of staying close to the given function $\vd$ in 
$\Omega \setminus D$ and a small energy $\Reg^l_{[\d]}(w)$. One should not confuse the energy term 
with double integral representations approximating semi-norms on manifolds (see for instance \cite{Heb96,KreMor19,EffNeuRum20}).

The indicator function of $\Omega \backslash D$, 
\begin{equation*}
\chi_{\Omega \setminus D}(x) = \begin{cases}
1 &\mbox{ if } x \in \Omega \setminus D,\\
0 &\mbox{ otherwise},
\end{cases}
\end{equation*}

used in \autoref{eq:reg} allows us to consider the two tasks of denoising ($D = \emptyset$) and inpainting ($D \neq \emptyset$) within one analysis. 

The paper is organised as follows: In \autoref{sec:Not} we constitute our notation and setting used to analyze variational methods for DTMRI data processing. 
We review regularization results from \cite{CiaMelSch19} in \autoref{sec:Regularization}. In \autoref{sec:DT} we verify that these results from \autoref{sec:Regularization} are applicable in the context of diffusion tensor imaging, meaning that we show that the functional $\F_{[\d]}^{\alpha,\vd}$ defined in \autoref{eq:reg} attains a minimizer and fulfills a stability as well as a convergence result. Furthermore we extend the analysis and give a uniqueness result using differential geometric properties of symmetric, positive definite matrices, where it is of particular importance, that these matrices endowed with the log-Euclidean metric form a flat Hadamard manifold. 
In \autoref{sec:Numerics} we give more details on the numerical minimization of the regularization functional, and discuss different variants.
In the last \autoref{sec:experiments} we show numerical results for denoising and inpainting problems of synthetic and real DTMRI data.
%

\section{Notation and Setting}
\label{sec:Not}
In the beginning we summarize basic notation and assumptions used throughout the paper.
In the theoretical part we work with general dimensions $n,m \in \N$ while we consider the 
particular case $n=2,m=3$, that is 2-dimensional slices of a 3-dimensional DTMRI image, in the numerical examples in \autoref{sec:experiments}.
\begin{assumption}\label{ass:metric}
	\begin{enumerate}
		\item $\Omega \subset \R^n$ is a  nonempty, bounded and 
		connected open set with Lipschitz boundary and $D \subset \Omega$ is measurable.
		\item $p \in (1, \infty), \ s \in (0,1)$ and $l \in \{0,1\}$.
		\item \label{itm:K} $K \subseteq \R^{m \times m}$ is a nonempty and closed subset of $\R^{m \times m}$.
		\item $\dRM: \R^{m \times m} \times \R^{m \times m} \rarr [0, \infty)$ denotes the Euclidean distance induced by the (Frobenius norm) on $\R^{m \times m}$ and 
		\item $\d \defeq \mathrm{d}_{K}: K \times K \rarr [0, \infty)$ denotes an arbitrary metric on $K$ which is equivalent to $\dRM$. \label{itm:EquivMetric}
	\end{enumerate}
\end{assumption}
Moreover, we need the definition of a mollifier which appears in the regularizer of the functional in \autoref{eq:reg}.
\begin{definition}[Mollifier]\label{def:mol} 
	We call $\rho \in C_c^\infty(\R^n;\R)$ a \emph{mollifier} if
	\begin{itemize}
		\item $\rho$ is a non-negative, radially symmetric function,
		\item $\int_{\R^n} \rho (x) \dx = 1$ and 
		\item there exists some $0 < \tau < \norm{\rho}_{L^\infty(\R^n;\R)}$ 
		and $\eta \defeq \eta_\tau > 0$ such that
		$\set{ z \in \R^n :\rho(z) \geq \tau } = \set{z \in \R^n : \enorm{z} \leq \eta }$.
	\end{itemize}
\end{definition}
The last condition holds for instance if $\rho$ is radially decreasing satisfying $\rho(0) > 0$.

\subsection{Diffusion tensors}
\label{ssec:DT}
	It is commonly assumed 
	that the recorded diffusion tensor images are functions with values which are symmetric, positive definite matrices.
   Hence we make the assumption that
   \begin{equation*}
   		w,\vd: \Omega \rarr \SPD(m),
   	\end{equation*}
   	where $\SPD(m)$ is the set of symmetric, positive definite, real $m \times m$ matrices defined below in \autoref{eq:defSPD}. When working with data from MRI measurements $m=3$. 
   	
   	In the following definition we summarize sets of matrices and associated norms on the sets:

\begin{definition}\label{def:sets}
\begin{itemize}
    \item The vector space of \emph{symmetric matrices}
    	\begin{equation}\label{eq:defSYM}
    	\SYM(m)
    	\defeq \big\{ M \in \R^{m \times m}  :  M^T = M \big\}.
    	\end{equation}
    \item Additionally, we define set of \emph{symmetric, positive definite $m \times m$ matrices}
    	\begin{equation}\label{eq:defSPD}
    	\SPD(m)
    	\defeq \big\{ M \in \SYM(m)  :  x^TMx>0 \text{ for } x\in \R^m\setminus\{0\} \big\}.
    	\end{equation}
	\item The set of \emph{symmetric, positive definite matrices with bounded spectrum}
	\begin{equation}\label{eq:SPDs}
	\SPDs_{[\underline{\ve}, \bar{\ve}]}(m)
	\defeq \big\{ M \in \SPD(m)  :  \mathrm{spec}(M) \subseteq [\underline{\ve}, \bar{\ve}] \big\},
	\end{equation}
	where $\mathrm{spec}$ denotes the spectrum of a given matrix. For diffusion tensors the spectrum is real.
	\item 
	The set of \emph{symmetric, positive definite matrices with bounded logarithm}
	\begin{equation}\label{eq:SPDb}
	\SPDl_z(m)
	\defeq \big\{ M \in \SPD(m)  :  \norm{\Log(M)}_F \leq z \big\},
	\end{equation}
	where $\Log$ is the matrix logarithm defined later in \autoref{def:MatrixProp} \autoref{itm:Log} and $\norm{\cdot}_F$ denotes the \emph{Frobenius norm} defined as
	\begin{equation}\label{eq:FrobNorm}
	\|M\|_F = \sqrt{\sum_{i,j=1}^{m} |m_{ij}|^2}.
	\end{equation}
\end{itemize}
\end{definition}

When working with DTMRI data, in particular in \autoref{sec:experiments}, we will chose $K=\SPDl_z(3)$. In the general theory stated in the next \autoref{sec:Regularization} \emph{any} nonempty and bounded set can be taken.

From now on and whenever possible we omit the space dimension and write $\SYM,\SPD, \SPDs_{[\underline{\ve}, \bar{\ve}]}$ and $\SPDl_z$ instead of $\SYM(m),\SPD(m), \SPDs_{[\underline{\ve}, \bar{\ve}]}(m)$ and $\SPDl_z(m)$.

\subsection{Fractional Sobolev spaces}
\label{ssec:SobolevSpaces}
Moreover, we need the definition of fractional Sobolev spaces and associated subsets.
\begin{definition}[Sobolev spaces of fractional order]\label{def:Spaces} Let \autoref{ass:metric} hold.
	\begin{itemize}
		\item We denote by $L^p(\Omega;\R^{m \times m})$ the \emph{Lebesgue space} of matrix-valued functions.
		\item The \emph{Sobolev space} $W^{1,p}(\Omega;\R^{m \times m})$ consists of all weakly differentiable functions in $L^p(\Omega;\R^{m \times m})$ for which
		\begin{equation*}
			\norm{w}_{W^{1,p}(\Omega;\R^{m \times m})} 
			\defeq \left( \norm{w}_{L^p(\Omega;\R^{m \times m})}^p 
			+  \int_\Omega  \norm{\nabla w(x)}^p_F \dx 
			\right)^{1/p} < \infty\;,
		\end{equation*}
		where $\nabla w$ is the Jacobian of $w$ and $\abs{w}_{W^{1,p}(\Omega;\R^{m \times m})} \defeq \left( \int_\Omega \norm{\nabla w(x)}_F^p \dx \right)^{1/p}$ is the Sobolev semi-norm.
		\item The \emph{fractional Sobolev space} of order $s$ is defined (cf. \cite{Ada75}) as the set
		\begin{gather*}
			W^{s,p}(\Omega;\R^{m \times m}) \defeq  \set{ w \in L^p(\Omega;\R^{m \times m}) :\frac{\norm{w(x)-w(y)}_F}{\enorm{x-y}^{\frac{n}{p}+s}} \in L^p (\Omega \times \Omega;\R) } 
		\end{gather*}
		equipped with the norm
		\begin{equation}\label{eq:s_norm}
			\norm{w}_{W^{s,p}(\Omega;\R^{m \times m})} \defeq \left(\norm{w}_{L^p(\Omega;\R^{m \times m})}^p + \abs{w}_{W^{s,p}(\Omega;\R^{m \times m})}^p \right)^{1/p},
		\end{equation}
		where $\abs{w}_{W^{s,p}(\Omega;\R^{m \times m})}$ is the semi-norm on $W^{s,p}(\Omega;\R^{m \times m})$, defined by
		\begin{equation}\label{eq:s_seminorm}
			\abs{w}_{W^{s,p}(\Omega;\R^{m \times m})} \defeq  
			\left(\int_{\Omega\times \Omega} \frac{\norm{w(x)-w(y)}_F^p}{\enorm{x-y}^{n+ps}} \dxy \right)^{1/p}
			\quad \text{ for all } w \in W^{s,p}(\Omega;\R^{m \times m}).
		\end{equation}
		\item We define the \emph{fractional Sobolev set} of order $s$ with data in $K$ as
		\begin{equation} \label{eq:sk}
			W^{s,p}(\Omega;K) \defeq \set{ w \in W^{s,p}(\Omega;\R^{m \times m}) : w(x) \in K \text{ for a.e. } x \in \Omega }.
		\end{equation}
		The \emph{Lebesgue set} with data in $K$ is defined as
		\begin{equation} \label{eq:lk}
			L^p(\Omega;K)  \defeq \set{ w \in L^p(\Omega;\R^{m \times m}) : w(x) \in K \text{ for a.e. } x \in \Omega}.
		\end{equation}
	\end{itemize}
\end{definition}
Note that $L^p(\Omega;K)$ and $W^{s,p}(\Omega;K)$ are sets and not linear spaces because summation of elements in $K$ is typically not closed in $K$. 

\section{Metric double integral regularization on closed subsets}
\label{sec:Regularization}
We start this section by stating conditions under which the regularization functional in \autoref{eq:reg} attains a minimizer and fulfills a stability as well as a convergence result. 
Therefore we recall results established in \cite{CiaMelSch19}. There the authors define a regularization functional 
inspired by the work of \citeauthor{BouBreMir01} \cite{BouBreMir01,Pon04b, Dav02}. The analysis in turn is based on \cite{SchGraGroHalLen09}. We apply these results to diffusion tensor image denoising and inpainting in the next section.

We start by stating general conditions on the exact data $w^0$, the noisy data $\vd$ and the functional $\F_{[\d]}^{\alpha,\vd}$, defined in \autoref{eq:reg}. 
\begin{assumption} \label{ass:gc} 
	Let \autoref{ass:metric} hold. 
	Moreover, let $w^0,\vd \in L^p(\Omega;K)$ and let $\rho$ be a mollifier as defined in \autoref{def:mol}. 
	We assume that 
	\begin{enumerate}
		\item	 
		For every $t > 0$ and $\alpha > 0$ the level sets
		\begin{equation*}
			\text{level}(\F^{\alpha,w^0}_{[\d]};t) \defeq \set{  w \in W^{s,p}(\Omega;K):\  \F^{\alpha,w^0}_{[\d]}(w) \leq t  }  
		\end{equation*}
		are weakly sequentially pre-compact in $W^{s,p}(\Omega;\R^{m \times m})$. \label{itm:F2}
		\item	  
		There exists $\bar{t} > 0$ such that $\text{level}(\F^{\alpha,w^0}_{[\d]};\bar{t})$ is nonempty. \label{itm:F3}
	\end{enumerate}
\end{assumption}

\begin{remark}\label{rem:AssFulfilled}
If \autoref{ass:metric} is fulfilled and
in particular when performing image denoising ($D = \emptyset$) or inpainting ($D \neq \emptyset$) of functions with values 
in $K$, then the functional \autoref{eq:reg} with $\Reg_{[\d]}^l$ as in \autoref{eq:phi} defined on $W^{s,p}(\Omega;K)$ satisfies 
\autoref{ass:gc} (cf. \cite{CiaMelSch19}).
\end{remark}
According to \cite{CiaMelSch19} we now have the following result giving existence of a minimizer of the functional in \autoref{eq:reg} as well as a stability and convergence result:
\begin{theorem}\label{thm:ex} 
	Let \autoref{ass:gc} hold (which is guaranteed by \autoref{rem:AssFulfilled}). 
	For the functional defined in \autoref{eq:reg} over $W^{s,p}(\Omega;K)$ with $\Reg_{[\d]}^l$ defined in \autoref{eq:phi}
	the following results hold:
	\begin{description} 	 
		\item{\emph{Existence:}} For every $v \in L^p(\Omega;K)$ and $\alpha > 0$ the functional 
		$\F_{[\d]}^{\alpha,v}: W^{s,p}(\Omega;K) \rightarrow [0, \infty)$ attains a minimizer in $W^{s,p}(\Omega;K)$. 
		\item{\emph{Stability:}} Let $\alpha > 0$ be fixed, $\vd \in L^p(\Omega;K)$ and let $(v_k)_{k \in \N}$ be a sequence 
		in $L^p(\Omega;K)$ such that $\norm{\vd-v_k}_{L^p(\Omega;K)} \rightarrow 0$. Then every sequence 
		$(w_k)_{k \in \N}$ satisfying 
		\begin{equation*}
			w_k \in \arg \min \set{ \F_{[\d]}^{\alpha,v_k}(w):\ w \in W^{s,p}(\Omega;K) }
		\end{equation*}	
		has a converging subsequence with respect to the weak topology of $W^{s,p}(\Omega;\R^{m \times m})$. 
		The limit $\tilde{w}$ of every such converging subsequence $(w_{k_j})_{j \in \N}$ is a minimizer of 
		$\F_{[\d]}^{\alpha,\vd}$. Moreover, $(\Reg_{[\d]}^l(w_{k_j}))_{j \in \N}$ converges to $\Reg_{[\d]}^l(\tilde{w})$.	
		
		\item{\emph{Convergence:}} Let $\alpha:(0, \infty) \rightarrow (0,\infty)$ be a function satisfying $\alpha(\delta) \rightarrow 0$ and 
		$\frac{\delta^p}{\alpha(\delta)} \rightarrow 0$ for $\delta \to 0$.
		
		Let $(\delta_k)_{k \in \N}$ be a sequence of positive real numbers converging to $0$. Moreover, let 
		$(v_k)_{k \in \N}$ be a sequence in $L^p(\Omega;K)$ with $\norm{w^0-v_k}_{L^p(\Omega;K)} \leq \delta_k$ and 
		set $\alpha_k \defeq \alpha(\delta_k)$. Then every sequence  $(w_k)_{k \in \N}$ defined as
		$$w_k \in \arg \min \set{ \F^{\alpha_k,v_k}_{[\d]}(w):\ w \in W^{s,p}(\Omega;K)} $$ 
		has a \emph{weakly converging} subsequence 
		$w_{k_j} \rightharpoonup w^0$ as $j \to \infty$ (with respect to the topology of $W^{s,p}(\Omega;\R^{m \times m})$).
		In addition, $\Reg_{[\d]}^l(w_{k_j}) \rightarrow \Reg_{[\d]}^l(w^0)$. 
		Moreover, it follows that even
		$w_k \rightharpoonup w^0$ weakly (with respect to the topology of $W^{s,p}(\Omega;\R^{m \times m})$) and 
		$\Reg_{[\d]}^l(w_{k}) \rightarrow \Reg_{[\d]}^l(w^0)$.
	\end{description}
\end{theorem}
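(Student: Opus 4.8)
The plan is to run the direct method of the calculus of variations inside the Tikhonov-regularization framework of \cite{SchGraGroHalLen09}, exactly in the spirit of \cite{CiaMelSch19}. Three ingredients drive all three assertions: the weak sequential pre-compactness of the level sets granted by \autoref{ass:gc}~\autoref{itm:F2}, the weak sequential lower semicontinuity of $\F_{[\d]}^{\alpha,v}$ on $W^{s,p}(\Omega;K)$, and the weak sequential closedness of the admissible set $W^{s,p}(\Omega;K)$. The single technical tool underpinning the latter two is the compact embedding $W^{s,p}(\Omega;\R^{m \times m}) \hookrightarrow\hookrightarrow L^p(\Omega;\R^{m \times m})$ for the bounded Lipschitz domain $\Omega$ with $s \in (0,1)$, $p \in (1,\infty)$: weak convergence $w_k \rightharpoonup w$ in $W^{s,p}$ yields strong $L^p$ convergence and, after passing to a subsequence, pointwise convergence $w_k(x) \to w(x)$ for a.e. $x$. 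I expect the genuine obstacle to be precisely the lower semicontinuity of the non-local double integral $\Reg_{[\d]}^l$ together with weak closedness of the \emph{non-convex} constraint set: neither the integrand nor $K$ is convex, so standard convex semicontinuity theorems are unavailable, and everything must instead be routed through a.e. convergence, continuity of the equivalent metric $\d$ (\autoref{ass:metric}~\autoref{itm:EquivMetric}), and Fatou's lemma.

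\emph{Existence.} I would take a minimizing sequence $(w_k)$ for $\F_{[\d]}^{\alpha,v}$; since its values are eventually bounded, it lies in a level set, so \autoref{ass:gc}~\autoref{itm:F2} furnishes a weak limit $\hat w$ in $W^{s,p}$ along a subsequence. Closedness of $K$ (\autoref{ass:metric}~\autoref{itm:K}) together with the a.e. convergence from the embedding shows $\hat w(x) \in K$ a.e., hence $\hat w \in W^{s,p}(\Omega;K)$. For lower semicontinuity I argue termwise: continuity of $\d$ makes both integrands, namely $\chi_{\Omega \setminus D}(x)\,\d^p(w_k(x),v(x))$ and $\d^p(w_k(x),w_k(y))\,\enorm{x-y}^{-(n+ps)}\rho^l(x-y)$, converge a.e. along the subsequence to the corresponding integrands for $\hat w$, and Fatou's lemma gives $\F_{[\d]}^{\alpha,v}(\hat w) \le \liminf_k \F_{[\d]}^{\alpha,v}(w_k)$, so $\hat w$ is a minimizer.

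\emph{Stability.} With $\alpha$ fixed and $v_k \to \vd$ in $L^p$, testing optimality against a fixed competitor (e.g.\ a constant map into $K$) and using metric equivalence bounds $\F_{[\d]}^{\alpha,v_k}(w_k)$ uniformly, so the $(w_k)$ again sit in a common level set and admit a weak limit $\tilde w$ along a subsequence $(w_{k_j})$. The only new point relative to Existence is the varying data: since $v_{k_j} \to \vd$ in $L^p$ (hence a.e. along a further subsequence) and $w_{k_j} \to \tilde w$ a.e., the fidelity integrand converges a.e.\ and Fatou again applies, so $\tilde w$ minimizes $\F_{[\d]}^{\alpha,\vd}$. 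For $\Reg_{[\d]}^l(w_{k_j}) \to \Reg_{[\d]}^l(\tilde w)$ I would squeeze: lower semicontinuity gives $\liminf_j \Reg_{[\d]}^l(w_{k_j}) \ge \Reg_{[\d]}^l(\tilde w)$, while the optimality of $w_{k_j}$ combined with the convergence of the data term supplies the matching reverse inequality for the $\limsup$.

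\emph{Convergence.} Here the parameter choice is decisive. Minimality gives $\F_{[\d]}^{\alpha_k,v_k}(w_k) \le \F_{[\d]}^{\alpha_k,v_k}(w^0)$; since $\norm{w^0-v_k}_{L^p(\Omega;K)} \le \delta_k$ and $\d$ is equivalent to $\dRM$, the right-hand fidelity is of order $\delta_k^p$ while its regularization part equals $\alpha_k \Reg_{[\d]}^l(w^0)$. Dividing by $\alpha_k$ and using $\delta_k^p/\alpha_k \to 0$ bounds $\Reg_{[\d]}^l(w_k)$ uniformly and forces the fidelity term to $0$; thus $(w_k)$ lies in bounded level sets, a weak limit exists along a subsequence, and the vanishing fidelity pins the limit to $w^0$ on $\Omega \setminus D$. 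Lower semicontinuity then yields $\Reg_{[\d]}^l(w^0) \le \liminf_j \Reg_{[\d]}^l(w_{k_j}) \le \limsup_j \Reg_{[\d]}^l(w_{k_j}) \le \Reg_{[\d]}^l(w^0)$, giving both $w_{k_j} \rightharpoonup w^0$ and $\Reg_{[\d]}^l(w_{k_j}) \to \Reg_{[\d]}^l(w^0)$. Whole-sequence convergence follows by the standard subsequence-of-subsequence principle once $w^0$ is identified as the unique such limit, which is where the uniqueness result developed later in \autoref{sec:DT} enters.
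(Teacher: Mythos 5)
The paper itself contains no proof of this statement: it is cited from \cite{CiaMelSch19} ("According to \cite{CiaMelSch19} we now have the following result"), so your attempt can only be measured against the standard variational-regularization argument underlying that reference. For the \emph{Existence} and \emph{Stability} parts your route is exactly that argument and is sound: precompact level sets from \autoref{ass:gc}, the compact embedding of $W^{s,p}(\Omega;\R^{m \times m})$ into $L^p(\Omega;\R^{m \times m})$ to upgrade weak convergence to a.e.\ convergence along subsequences, closedness of $K$ to keep the limit in $W^{s,p}(\Omega;K)$, continuity of $\d$ plus Fatou for lower semicontinuity of both integrals, and a squeeze between optimality and lower semicontinuity for the convergence of the $\Reg_{[\d]}^l$-values. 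The one detail you elide --- that boundedness of $\F_{[\d]}^{\alpha,v_k}(w_k)$ places the $w_k$ in a level set of the functional with the \emph{fixed} data appearing in \autoref{ass:gc} --- is repaired by the elementary inequality $\d^p(a,c)\le 2^{p-1}\left(\d^p(a,b)+\d^p(b,c)\right)$ together with the metric equivalence of \autoref{ass:metric} \autoref{itm:EquivMetric}, so it is not a real gap.

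The \emph{Convergence} part, however, has a genuine hole, and you half-admit it yourself. When $D\neq\emptyset$ the vanishing fidelity term identifies the weak limit $\tilde w$ only on $\Omega\setminus D$; nothing in your argument forces $\tilde w = w^0$ on the inpainting region $D$. Your subsequent squeeze is then circular: lower semicontinuity gives $\Reg_{[\d]}^l(\tilde w)\le \liminf_j \Reg_{[\d]}^l(w_{k_j})\le\limsup_j \Reg_{[\d]}^l(w_{k_j}) \le \Reg_{[\d]}^l(w^0)$, which identifies $\tilde w$ as a $\Reg_{[\d]}^l$-minimizing element among functions agreeing with $w^0$ on $\Omega\setminus D$ --- not as $w^0$ itself, unless $w^0$ happens to be such a minimizer (this is the implicit hypothesis under which results of this type hold; for $D=\emptyset$ the issue disappears, since then the fidelity pins the limit on all of $\Omega$). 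Finally, your appeal to the uniqueness theorem of \autoref{sec:DT} to obtain whole-sequence convergence cannot work here: \autoref{thm:ex} is stated for an arbitrary closed $K$ and any metric equivalent to $\dRM$, whereas \autoref{thm:uniqueness} concerns only $K=\SPDl_z$ with the log-Euclidean metric and, worse, asserts uniqueness of minimizers of $\F_{[\d]}^{\alpha,\vd}$ at fixed $\alpha$, not uniqueness of weak cluster points of regularized solutions as $\alpha_k\to 0$. It is also logically circular, since the results of \autoref{sec:DT} are themselves consequences of \autoref{thm:ex}. The whole-sequence claim should instead come from the subsequence-of-subsequences principle once one knows that \emph{every} weak cluster point equals $w^0$ --- which is precisely the point your argument leaves open.
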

In the theorem above, stability (with respect to the $L^p$-norm) ensures that the minimizers of $\F_{[\d]}^{\alpha,\vd}$ depend continuously on the given data $\vd$. 
We emphasize that in an Euclidean setting (that is on $W^{s,p}(\Omega,\R^{m \times m})$, for $s > 0$ and $p > 1$, one could expect convergence in even stronger norms. However, here, we have to make sure that the traces into $K \subseteq \R^{m \times m}$ are well-defined in appropriate Sobolev spaces, which requires additional compactness assumptions, or in other words, stronger regularization.

In the next section we apply \autoref{thm:ex} to diffusion tensor images, i.e. when choosing $K$ as a closed subset of the symmetric, positive definite matrices.

\section{Diffusion tensor regularization}
\label{sec:DT}
The goal of this section is to define appropriate fractional order Sobolev sets as defined in \autoref{eq:sk} of functions which can represent diffusion tensor images. To this end we use 
the set of symmetric, positive definite $m\times m$ matrices with bounded logarithm (defined in \autoref{eq:SPDb})
	\begin{equation}\label{eq:K}
	\boxed{
		K = \SPDl_z
		}
	\end{equation}
and associate it with the \emph{log-Euclidean metric}, 
defined below in \autoref{eq:LEmetric}. This metric was shown to be an adequate distance measure for DTMRI, see e.g. \cite{FilArsPenAya07,ArsFilPenAya05}.

Below we show that \autoref{thm:ex} applies to the regularization functional in \autoref{eq:reg} with the particular choice $K = \SPDl_z$. In addition to what follows from the general theory 
from \cite{CiaMelSch19} in a straight forward manner we present a uniqueness result for the 
minimizer of the regularization functional.

We begin by defining needed concepts from matrix calculus. When working with symmetric, positive definite matrices many of the operations below reduce to their scalar counterpart applied to the eigenvalues.

\subsection{Matrix calculus}
We start this section by repeating basic definitions known from matrix calculus (see for instance \cite{Oss92}). Especially the matrix logarithm is needed to define the log-Euclidean metric on the symmetric, positive definite matrices. 
\begin{lemma}[Matrix properties] \label{rem:SPDProp}
	\begin{enumerate}
		\item \emph{Eigendecomposition:} \label{itm:Decomp} 
		Let $A \in \SYM$ with eigenvalues $(\lambda_i)_{i=1}^m$. Then we can write
		\begin{equation*}
		A = U\Lambda U^T,
		\end{equation*}
		where $U \in \R^{m \times m}$ is the orthonormal matrix whose i-th column consists of the i-th normalized eigenvector of $A$. Hence we have that $U U^T  = \mathbb{1}_m$, where $\mathbb{1}_m$ denotes the identity matrix in $\R^{m \times m}$. $\Lambda$ is the diagonal matrix whose diagonal entries are the corresponding eigenvalues, $\Lambda = \diag(\lambda_1, \dots \lambda_m)$. 
		\item \label{itm:unitary} If $U,V \in \R^{m \times m}$ are both unitary then so are $UV^T, U^TV, VU^T$ and $V^TU$. 
	\end{enumerate}
\end{lemma}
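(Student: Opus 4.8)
The plan is to treat the two claims separately, since each is a classical fact of linear algebra whose proof I would only sketch rather than belabor.

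For the eigendecomposition in part \emph{(i)}, I would invoke (or briefly reprove) the spectral theorem for real symmetric matrices. First I would check that every eigenvalue of $A \in \SYM$ is real: viewing $A$ as acting on $\C^m$, if $Av = \lambda v$ with $v \neq 0$, then $\lambda \norm{v}^2 = v^* A v$, and since $A = A^T$ is real one has $\overline{v^* A v} = v^* A^T v = v^* A v$, so $v^* A v$ is real and hence $\lambda = \bar\lambda$. Next I would record that eigenvectors for distinct eigenvalues are orthogonal, again by symmetry, via $\lambda_i \inner{v_i}{v_j} = \inner{Av_i}{v_j} = \inner{v_i}{Av_j} = \lambda_j \inner{v_i}{v_j}$. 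The core step is then the production of an orthonormal basis of eigenvectors, which I would do by induction on $m$: pick a unit eigenvector $v_1$ (it exists because the previous step guarantees the characteristic polynomial has a real root), observe that $A$ leaves the orthogonal complement $v_1^\perp$ invariant since $\inner{Aw}{v_1} = \inner{w}{Av_1} = \lambda_1 \inner{w}{v_1} = 0$ for $w \perp v_1$, and apply the induction hypothesis to the restriction of $A$ to this $(m-1)$-dimensional invariant subspace. Assembling the resulting orthonormal eigenvectors as the columns of $U$ yields $U U^T = \mathbb{1}_m$ together with $A = U \Lambda U^T$, where $\Lambda = \diag(\lambda_1,\dots,\lambda_m)$.

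For part \emph{(ii)}, I would argue directly from the defining property that a real square matrix $U$ is unitary precisely when $U^T U = \mathbb{1}_m$, which for square matrices is equivalent to $U U^T = \mathbb{1}_m$. I would first note that the transpose of a unitary matrix is again unitary, so $U^T$ and $V^T$ are unitary whenever $U$ and $V$ are. Each of the four matrices in the statement is therefore a product of two unitary matrices, and a one-line computation such as $(UV^T)^T (UV^T) = V U^T U V^T = V V^T = \mathbb{1}_m$ settles the claim; the cases $U^T V$, $V U^T$ and $V^T U$ are handled by the identical manipulation.

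Neither part presents a genuine obstacle. The only mildly delicate point is the existence of a real eigenvector underpinning the induction in part \emph{(i)}, and this is exactly the place where the symmetry of $A$ — equivalently, the reality of its spectrum — is indispensable. Everything else is routine bookkeeping with orthogonality relations.
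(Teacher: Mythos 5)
Your proof is correct. Note, however, that the paper itself does not prove this lemma at all: it is stated as a standard fact of matrix calculus with a citation to the literature, so there is no proof to compare against. Your argument — the reality of the spectrum, orthogonality of eigenvectors for distinct eigenvalues, and the induction on invariant subspaces for part \emph{(i)}, together with the one-line computation $(UV^T)^T(UV^T) = VU^TUV^T = \mathbb{1}_m$ for part \emph{(ii)} — is the standard textbook proof and supplies exactly what the paper delegates to its reference.
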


Next we state
the definitions of the matrix exponential and logarithm, see in particular \cite{PenFilAya06,FilArsPenAya07a}.

\begin{definition}\label{def:MatrixProp}
	Let $A,B \in \SYM$ with corresponding eigendecompositions $A = U\Lambda_A U^T$ and $B = V \Lambda_B V^T$, where $U,V \in R^{m \times m}$ unitary and 
	$\Lambda_A = \diag(\lambda_1, \dots, \lambda_m), \Lambda_B = \diag(\mu_1, \dots, \mu_m) \in \R^{m \times m}$ diagonal.
	\begin{enumerate}
	\item \emph{Exponential map:} The exponential map is defined as
		\begin{equation*}
			\Exp(A) = \Exp(U\Lambda_A U^T) = U \Exp(\Lambda_A) U^T.
		\end{equation*}
		It holds that
		\begin{equation*}
			\Exp(\Lambda_A) = \diag(\e^{\lambda_1}, \dots, \e^{\lambda_m}),
		\end{equation*}
		where $\e: \R \rarr \R_{\geq 0}$ denotes the (scalar) exponential function. 
	   $\Exp: \SYM \rarr \SPD$
		is a diffeomorphism \cite[Thm. 2.8]{FilArsPenAya07a}.
	\item \label{itm:Log}\emph{Logarithm: } If $\Exp(A) = B$, then A is the matrix logarithm of $B$. It is defined as
		\begin{equation*}
			\Log(B) = \Log(V \Lambda_B V^T) = V \Log(\Lambda_B)V^T.
		\end{equation*} 
		Moreover,
		\begin{equation*}
			\Log(\Lambda_B) = \diag(\log(\mu_1), \dots, \log(\mu_m)),
		\end{equation*}
		where $\log: \R_{\geq 0} \rarr \R$ is the (scalar) natural logarithm, i.e. $\log \defeq \log_\e$. \\
		 When restricting to symmetric, positive definite matrices $\Log: \SPD \rarr \SYM$ is a diffeomorphism \cite[Thm. 2.8]{FilArsPenAya07a}.
	\end{enumerate}
\end{definition}

The previous \autoref{def:MatrixProp} shows that the exponential and logarithm of a symmetric (positive definite) matrix can be computed easily due to the eigendecomposition (see \autoref{rem:SPDProp}) by calculating the
scalar exponential map and logarithm of the eigenvalues.
	
\begin{remark}[Matrix logarithm]
	For a general matrix in $\R^{m \times m}$ the matrix logarithm is not unique. Matrices with positive eigenvalues have a \emph{unique, symmetric} logarithm, called the \emph{pricipal-logarithm} \cite{FilArsPenAya07a}.
\end{remark}

The next lemma states properties of the Frobenius norm (recall \autoref{eq:FrobNorm}).
\begin{lemma}[Properties of Frobenius norm] \label{lem:PropFrobNorm}
	\begin{enumerate}
		\item  Let $A,B \in \R^{m \times m}$ be symmetric and skew-symmetric, respectively, i.e. $A = A^T, \ B = -B^T$. Then
		\begin{equation}\label{eq:PropFrob}
		\norm{A+B}_F^2 = \norm{A}_F^2 + \norm{B}_F^2.
		\end{equation}
		\item  The Frobenius norm is unitary invariant, i.e.
		\begin{equation}\label{eq:OrthogonalInv}
		\norm{A}_F = \norm{UAV}_F
		\end{equation}
		for $A \in \R^{m \times m}$ and $U,V \in \R^{m \times m}$ unitary.
		\item  If $A \in \SPD$ with (positive) eigenvalues $(\lambda_i)_{i=1}^m$ then
		\begin{equation}\label{eq:FrobeniusEigenvalues}
		\norm{\Log(A)}_F = \left( \sum_{i=1}^{m} \log^2(\lambda_i) \right)^{1/2}.
		\end{equation}
	\end{enumerate}
\end{lemma}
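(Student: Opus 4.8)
The plan is to reduce all three claims to the single elementary identity $\norm{M}_F^2 = \mathrm{tr}(M^T M)$ (which is immediate from \autoref{eq:FrobNorm}, since the right-hand side is $\sum_{i,j} m_{ij}^2$) together with the cyclic invariance of the trace, $\mathrm{tr}(XY) = \mathrm{tr}(YX)$. Everything then becomes bookkeeping with traces, so I do not expect a genuine obstacle; the only points requiring care are interpreting ``unitary'' for real matrices as orthogonality ($U^T U = U U^T = \mathbb{1}_m$) and, in the third part, invoking the earlier matrix-calculus results cleanly.

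For (i) I would write $\norm{A+B}_F^2 = \mathrm{tr}\big((A+B)^T(A+B)\big)$ and expand into $\mathrm{tr}(A^TA) + \mathrm{tr}(B^TB) + \mathrm{tr}(A^TB) + \mathrm{tr}(B^TA)$. The first two terms are exactly $\norm{A}_F^2 + \norm{B}_F^2$, so the whole content is showing the cross terms vanish. Using $A = A^T$ and $B = -B^T$ they become $\mathrm{tr}(AB) - \mathrm{tr}(BA)$, which is zero by cyclic invariance; this is the one genuine idea in the lemma. For (ii) I would compute $\norm{UAV}_F^2 = \mathrm{tr}\big(V^T A^T U^T U A V\big)$, cancel $U^T U = \mathbb{1}_m$, and then use cyclicity to move $V$ past to get $\mathrm{tr}(A^T A\, V V^T) = \mathrm{tr}(A^T A) = \norm{A}_F^2$ after cancelling $V V^T = \mathbb{1}_m$. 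Taking square roots (both sides nonnegative) gives the claim.

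Finally, (iii) combines (ii) with the eigendecomposition. I would take $A = U \Lambda U^T$ from \autoref{rem:SPDProp}\autoref{itm:Decomp} with $U$ orthogonal, so that by \autoref{def:MatrixProp}\autoref{itm:Log} one has $\Log(A) = U\,\Log(\Lambda)\,U^T$ with $\Log(\Lambda) = \diag(\log\lambda_1,\dots,\log\lambda_m)$. Applying part (ii) with left factor $U$ and right factor $U^T$ (both orthogonal, cf. \autoref{rem:SPDProp}\autoref{itm:unitary}) yields $\norm{\Log(A)}_F = \norm{\Log(\Lambda)}_F$, and since $\Log(\Lambda)$ is diagonal, \autoref{eq:FrobNorm} evaluates this directly to $\big(\sum_{i=1}^m \log^2(\lambda_i)\big)^{1/2}$, as required. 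The positivity of the $\lambda_i$ (from $A \in \SPD$) is what makes the scalar logarithms well defined, closing the argument.
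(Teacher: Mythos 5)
Your proof is correct and follows essentially the same route as the paper: the trace representation of the Frobenius norm plus cyclic invariance for part \emph{(ii)}, and the eigendecomposition combined with the logarithm definition and part \emph{(ii)} for part \emph{(iii)}. The only cosmetic difference is that the paper treats part \emph{(i)} entrywise directly from the definition in \autoref{eq:FrobNorm}, whereas you route it through the trace identity --- both reduce to the same observation that the cross term between a symmetric and a skew-symmetric matrix vanishes.
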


\begin{proof}
	The proof of the first item is straightforward by using the definition of $\norm{\cdot}_F$ in \autoref{eq:FrobNorm}. The second item follows directly by considering the trace representation of the Frobenius norm \cite{Wer92}:
	\begin{equation*}
	\norm{UAV}_F^2 = \mathrm{trace}\left( (UAV)^T UAV \right) 
	=  \mathrm{trace}\left( (V^TA^T AV \right) 
	= \mathrm{trace}\left( ( AV V^TA^T \right) 
	= \norm{A}_F ^2.
	\end{equation*}
	 The third item is a direct consequence of \autoref{rem:SPDProp} \autoref{itm:Decomp}, \autoref{def:MatrixProp} \autoref{itm:Log} and \autoref{eq:OrthogonalInv}.
\end{proof}

The last lemma of this subsection deals with the set $\SPDs_{[\underline{\ve},\bar{\ve}]}$, the set of symmetric, positive definite matrices with bounded spectrum in the interval $[\underline{\ve},\bar{\ve}]$, defined in \autoref{eq:SPDs}. We need this result later in the numerical implementation for defining a suitable projection: 

Given an arbitrary matrix $A \in \R^{m \times m}$ there always exists a unique matrix $M \in \SPDs_{[\underline{\ve},\bar{\ve}]}$ which is closest in the Frobenius norm, i.e.
\begin{equation}\label{eq: SPDBapprox}
M = \argmin_{X \in \SPDs_{[\underline{\ve},\bar{\ve}]}} \|A-X\|_F^2.
\end{equation} 
The minimizing matrix $M$ can be computed explicitly as stated in the following lemma.  
The proof is done in a similar way as in \cite[Theorem 2.1]{Hig88} and included here for completeness.

\begin{lemma}\label{lem:Approx}
	Let $A \in \R^{m \times m}$. Define $B \defeq \frac{1}{2}(A + A^T)$ and $C \defeq \frac{1}{2}(A - A^T)$ as the symmetric and skew-symmetric parts of $A$, respectively. Let $(\lambda_i)_{i=1}^m$ be the eigenvalues of $B$ which can be decomposed into $B = Z \Lambda Z^T$, where $Z$ is a unitary matrix, i.e. $ZZ^T = Z^TZ = \mathbb{1}_m$, and $\Lambda = \diag(\lambda_1, \dots, \lambda_m)$.
	Then the unique minimizer of 
	\begin{equation}\label{eq:minPb}
		\min_{X \in \SPDs_{[\underline{\ve},\bar{\ve}]}} \|A-X\|_F^2,
	\end{equation}
	where $\SPDs_{[\underline{\ve},\bar{\ve}]}$ is defined in \autoref{eq:SPDs}, is 
	$ZYZ^T$ with $Y = \mathrm{diag}(d_1, \dots, d_m)$, where 
	$d_i \defeq \begin{cases}
	\lambda_i &\mbox{ if } \lambda_i \in [\underline{\ve},\bar{\ve}], \\
	\bar{\ve} &\mbox{ if } \lambda_i > \bar{\ve}, \\
	\underline{\ve} &\mbox{ if } \lambda_i < \underline{\ve}.
	\end{cases}$.
\end{lemma}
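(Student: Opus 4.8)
The plan is to read the problem as a metric projection onto a closed convex set, which immediately yields existence and uniqueness, and then to identify the projection explicitly by reducing to a diagonal problem. First I would observe that for symmetric $X$ the spectral constraint is equivalent to two Loewner inequalities, so that
\begin{equation*}
\SPDs_{[\underline{\ve},\bar{\ve}]} = \set{ X \in \SYM : \underline{\ve}\,\mathbb{1}_m \preceq X \preceq \bar{\ve}\,\mathbb{1}_m },
\end{equation*}
which is a closed, convex subset of the Hilbert space $\R^{m \times m}$ endowed with the Frobenius inner product $\inner{\cdot}{\cdot}_F$. Since $X \mapsto \norm{A-X}_F^2$ is strictly convex and coercive, the Hilbert-space projection theorem guarantees a unique minimizer; the remaining work is only to show that this minimizer equals $ZYZ^T$.

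Next I would reduce the ambient matrix to its symmetric part. Writing $A - X = (B-X) + C$ with $B-X$ symmetric and $C$ skew-symmetric, \autoref{eq:PropFrob} gives $\norm{A-X}_F^2 = \norm{B-X}_F^2 + \norm{C}_F^2$. As $\norm{C}_F^2$ does not depend on $X$, the problem is equivalent to minimizing $\norm{B-X}_F^2$ over $\SPDs_{[\underline{\ve},\bar{\ve}]}$. Using $B = Z\Lambda Z^T$ together with the unitary invariance of the Frobenius norm (\autoref{eq:OrthogonalInv}) and the fact that conjugation by the unitary $Z$ preserves both symmetry and the spectrum (hence maps $\SPDs_{[\underline{\ve},\bar{\ve}]}$ onto itself), I may replace $B$ by $\Lambda$ and instead minimize $\norm{\Lambda - X}_F^2$ over $X \in \SPDs_{[\underline{\ve},\bar{\ve}]}$. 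The candidate minimizer then becomes $Y = \diag(d_1,\dots,d_m)$, for which $\norm{\Lambda - Y}_F^2 = \sum_i (\lambda_i - d_i)^2$.

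To prove optimality of $Y$ I would diagonalize a competitor as $X = WMW^T$ with $M = \diag(\mu_1,\dots,\mu_m)$ and $\mu_k \in [\underline{\ve},\bar{\ve}]$, and expand $\norm{\Lambda - X}_F^2 = \sum_i \lambda_i^2 + \sum_k \mu_k^2 - 2\inner{\Lambda}{X}_F$. Setting $P_{ik} \defeq \abs{(Z^TW)_{ik}}^2$, which is doubly stochastic, one has $\inner{\Lambda}{X}_F = \sum_{i,k} \lambda_i \mu_k P_{ik}$. By Birkhoff's theorem the maximum of this linear form over doubly stochastic $P$ is attained at a permutation matrix, and the rearrangement inequality bounds it by the similarly ordered sum, whence $\norm{\Lambda - X}_F^2 \geq \sum_i (\lambda_i - \mu_i)^2$ after matching the orderings. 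Minimizing each summand over $\mu_i \in [\underline{\ve},\bar{\ve}]$ is the scalar projection onto the interval, whose solution is exactly the truncation $d_i$; since truncation is order-preserving this diagonal assignment agrees with $Y$ in the original ordering. Thus $\norm{\Lambda - X}_F^2 \geq \sum_i (\lambda_i - d_i)^2 = \norm{\Lambda - Y}_F^2$, and transporting back by $Z$ shows $ZYZ^T$ attains the minimum.

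The routine parts are the Pythagorean reduction to the symmetric part and the diagonalization by $Z$; the real content, and the step I expect to be the main obstacle, is controlling the misalignment between the eigenbasis $W$ of the competitor $X$ and the eigenbasis $Z$ of $B$. This is precisely where the doubly-stochastic / rearrangement argument (equivalently, the Hoffman--Wielandt inequality for symmetric matrices) is needed: it rules out any gain from rotating the eigenvectors of $X$ away from those of $B$ and forces the optimal $X$ to share the eigenvectors of $B$. I would also be careful with the ordering bookkeeping, verifying that the monotonicity of the truncation map is what makes the optimal diagonal assignment coincide with $Y = \diag(d_1,\dots,d_m)$ rather than some permuted variant. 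Uniqueness, finally, needs no extra argument beyond the convexity observation of the first step.
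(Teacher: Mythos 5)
Your proof is correct, but its core argument is genuinely different from the paper's. The paper (following Higham) performs the same two reductions you do --- the Pythagorean splitting $\norm{A-X}_F^2 = \norm{B-X}_F^2 + \norm{C}_F^2$ via \autoref{eq:PropFrob}, then conjugation by $Z$ via \autoref{eq:OrthogonalInv} --- but it does \emph{not} diagonalize the competitor. Instead it sets $Y = Z^T X Z$ and expands entrywise,
\begin{equation*}
\norm{\Lambda - Y}_F^2 = \sum_{\{i,j : i \neq j\}} y_{ij}^2 + \sum_{i=1}^m (\lambda_i - y_{ii})^2,
\end{equation*}
discards the nonnegative off-diagonal terms, and bounds each diagonal term below by the squared distance of $\lambda_i$ to $[\underline{\ve},\bar{\ve}]$, using (implicitly) that the diagonal entries of a symmetric matrix with spectrum in $[\underline{\ve},\bar{\ve}]$ are Rayleigh quotients and hence themselves lie in $[\underline{\ve},\bar{\ve}]$; uniqueness then comes from tracking the equality cases. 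You instead diagonalize the competitor as $X = WMW^T$ and control the mismatch of eigenbases through the doubly stochastic matrix $P_{ik} = \abs{(Z^TW)_{ik}}^2$, Birkhoff's theorem and the rearrangement inequality --- in effect re-proving Hoffman--Wielandt. This is correct, but it is heavier machinery than the entrywise estimate requires. Conversely, your opening observation that $\SPDs_{[\underline{\ve},\bar{\ve}]} = \set{X \in \SYM : \underline{\ve}\,\mathbb{1}_m \preceq X \preceq \bar{\ve}\,\mathbb{1}_m}$ is closed and convex buys you something the paper's proof does not have: existence and uniqueness are immediate from the Hilbert projection theorem, with no equality-case bookkeeping (and, as a by-product, the projection is nonexpansive in the Frobenius norm, which would also streamline the regularity argument for $\Proj_1$ in \autoref{ssec:projections}). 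One simplification to your final step: once you have the bound $\norm{\Lambda - X}_F^2 \geq \sum_i (\lambda_i - \mu_{\sigma(i)})^2$, each summand can be minimized over $\mu_{\sigma(i)} \in [\underline{\ve},\bar{\ve}]$ independently, so the order-preservation of the truncation map plays no role; the ``ordering bookkeeping'' you flagged as a potential obstacle is vacuous.
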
 

\begin{proof}
	By definition of $B$ and $C$ we can write $A = B+C$ and thus
	\begin{equation*}
	\norm{A-X}_F^2 = \norm{B+C-X}_F^2 = \norm{B-X}_F^2 + \norm{C}_F^2,
	\end{equation*}
	where we used \autoref{eq:PropFrob} in the second equality. The problem in \autoref{eq:minPb} thus reduces to finding
	\begin{equation*}
	\argmin_{X \in \SPDs_{[\underline{\ve},\bar{\ve}]}} \|B-X\|_F^2.
	\end{equation*}
	The matrix $B$ is symmetric and thus we can write $B = Z \Lambda Z^T$, where $Z \in \R^{m \times m}$ is a unitary matrix whose columns are the eigenvectors of $B$ and $\Lambda \in \R^{m \times m}$ is a diagonal matrix whose entries are the eigenvalues of $B$, i.e. $\Lambda = \mathrm{diag}(\lambda_1, \dots, \lambda_m)$. Let $Y = Z^TXZ$ be similar to $X$ so that $\mathrm{spec}(Y) \subset [\underline{\ve},\bar{\ve}]$. Then we obtain by using \autoref{eq:OrthogonalInv}
	\begin{align*}
	\norm{B-X}_F^2 
	&= \norm{\Lambda - Y}_F^2 
	= \sum_{\{i,j : i \neq j\}} y_{ij}^ 2 + \sum_{i=1}^m (\lambda_i-y_{ii})^2 \\
	& = \sum_{\{i,j : i \neq j\}} y_{ij}^ 2 
	+ \sum_{\{i: \lambda_i \in [\underline{\ve},\bar{\ve}]\}} (\lambda_i-y_{ii})^2
	+ \sum_{\{i: \lambda_i > \bar{\ve}\}} (\lambda_i-y_{ii})^2 
	+ \sum_{\{i: \lambda_i < \underline{\ve}\}} (\lambda_i-y_{ii})^2 \\
	& \geq \sum_{\{i: \lambda_i > \bar{\ve}\}} (\lambda_i-\bar{\ve})^2 
	+ \sum_{\{i: \lambda_i < \underline{\ve}\}} (\lambda_i-\underline{\ve})^2.
	\end{align*}
	
	Thus the lower bound is uniquely attained for $Y \defeq \mathrm{diag}(d_i)$ with
	\begin{equation*}
	d_i \defeq \begin{cases}
	\lambda_i &\mbox{ if } \lambda_i \in [\underline{\ve},\bar{\ve}], \\
	\bar{\ve} &\mbox{ if } \lambda_i > \bar{\ve}, \\
	\underline{\ve} &\mbox{ if } \lambda_i < \underline{\ve}.
	\end{cases}
	\end{equation*} 
\end{proof}

\subsection{Existence}
After given the needed definitions from matrix calculus the goal of this subsection is now to apply \autoref{thm:ex} to the regularization 
functional defined in \autoref{eq:reg} with the set $K= \SPDl_z$
defined in \autoref{eq:K} and 
associated log-Euclidean metric defined below in \autoref{eq:LEmetric}.
Therefore we need to prove the equivalence of the log-Euclidean and Euclidean metric to guarantee in particular that \autoref{ass:metric} \autoref{itm:EquivMetric} is fulfilled. Then \autoref{ass:gc} holds true as stated in \autoref{rem:AssFulfilled} and therefore \autoref{thm:ex} is applicable.

We start by defining and reviewing some properties of the log-Euclidean metric.
\begin{definition}[Log-Euclidean metric]
Let $A,B \in \SPD$. The \emph{log-Euclidean metric} is defined as
\begin{equation}\label{eq:LEmetric}
\boxed{
\d_{\SPD}(A,B) \defeq \d(A,B) \defeq \|\Log(A)-\Log(B)\|_F, \quad A,B \in \SPD.
}
\end{equation}
\end{definition}

\begin{lemma}
	The \emph{log-Euclidean metric} satisfies the metric axioms on $\SPD$.
\end{lemma}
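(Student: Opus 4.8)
The plan is to observe that $\d_{\SPD}$ is nothing but the pullback, under the matrix logarithm, of the metric induced by the Frobenius norm on the vector space $\SYM$. Since $\Log: \SPD \rarr \SYM$ is a diffeomorphism by \autoref{def:MatrixProp} \autoref{itm:Log} (in particular a bijection), and since $\norm{\cdot}_F$ is a genuine norm on the linear space $\SYM$, all four metric axioms transfer to $\SPD$ essentially for free. The whole content of the lemma is the standard fact that a norm on a vector space induces a metric, together with the transport of this structure along a bijection.

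Concretely, I would verify the axioms in order. Non-negativity, $\d_{\SPD}(A,B) = \norm{\Log(A)-\Log(B)}_F \geq 0$, and symmetry, $\norm{\Log(A)-\Log(B)}_F = \norm{\Log(B)-\Log(A)}_F$, are immediate from the properties of a norm (using $\norm{-M}_F = \norm{M}_F$). For the identity of indiscernibles, $\d_{\SPD}(A,B) = 0$ forces $\Log(A) = \Log(B)$ because $\norm{\cdot}_F$ is positive definite on $\SYM$; the injectivity of $\Log$ (part of it being a diffeomorphism, \autoref{def:MatrixProp} \autoref{itm:Log}) then yields $A = B$, while the converse direction is trivial.

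The triangle inequality is the only step requiring a short computation: for $A,B,C \in \SPD$ I would insert $\Log(B)$ and write $\Log(A)-\Log(C) = \big(\Log(A)-\Log(B)\big) + \big(\Log(B)-\Log(C)\big)$, then apply the triangle inequality of the Frobenius norm on $\SYM$ to obtain
$$\d_{\SPD}(A,C) \leq \norm{\Log(A)-\Log(B)}_F + \norm{\Log(B)-\Log(C)}_F = \d_{\SPD}(A,B) + \d_{\SPD}(B,C).$$

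I do not expect any genuine obstacle here. The only place where a nontrivial input is actually used is the identity of indiscernibles, which relies on the injectivity of the matrix logarithm guaranteed by \autoref{def:MatrixProp} \autoref{itm:Log}; everything else is formal manipulation of the defining norm. It is worth recording that the same argument shows $\d_{\SPD}$ is well-defined (single-valued) on $\SPD$, since $\Log$ restricted to symmetric positive definite matrices is the unique principal logarithm.
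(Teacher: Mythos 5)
Your argument is correct and is exactly the paper's approach: the paper's proof states in one sentence that the result ``follows directly because $\norm{\cdot}_F$ is a norm and $\Log$ restricted to $\SPD$ is a diffeomorphism,'' and your write-up is simply a detailed verification of that same observation (pulling back the norm-induced metric on $\SYM$ along the bijection $\Log$, with injectivity of $\Log$ handling the identity of indiscernibles). No gaps and no difference in method, only in the level of detail.
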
 
\begin{proof}
This follows directly because $\norm{\cdot}_F$ is a norm and $\Log$ restricted to $\SPD$ is a diffeomorphism.
\end{proof}

The reasons for choosing this measure of distance is stated in the following remark.
\begin{remark}
The log-Euclidean metric arises when considering $\SPD$
not just as convex cone in the vector space of matrices but as a Riemannian manifold. Thus it can be endowed with a Riemannian metric defined by an inner product on the tangent space, see for example \cite{DryKolZho03,PenFilAya06, FilArsPenAya07a}. 
Two widely used geodesic distances are the \emph{affine-invariant metric} 
\begin{equation}\label{eq:AImetric}
\d_{\mathrm{AI}}(A,B)= \|\Log(A^{-1/2}BA^{-1/2})\|_F, \quad A,B \in \SPD,
\end{equation}
and the \emph{log-Euclidean metric} as stated above. 
These measures of dissimilariy are more adequate in DTMRI as pointed out in \cite{FilArsPenAya07a} because zero or negative eigenvalues induce an infinite distance.

The affine-invariant distance measure is computationally much more demanding which is a major drawback. This is not the case for the log-Euclidean distance, which leads to Euclidean distance computations in the matrix logarithmic domain.
\end{remark}

The following statement can be found in \cite[Section 2.4 \& 2.5]{MinMur18}.
\begin{lemma}
Let $\d =\d_{\SPD}$ denote the log-Euclidean metric (as defined in \autoref{eq:LEmetric}), $\d_{\mathrm{AI}}$ the affine-invariant metric (as defined in \autoref{eq:AImetric}) and $\d_{\R^{m \times m}}$ the standard Euclidean distance. Then $(\SPD,\d)$ as well as $(\SPD,\d_{\mathrm{AI}})$ form a complete metric space. This is not the case for $(\SPD,\d_{\R^{m \times m}})$. 
\end{lemma}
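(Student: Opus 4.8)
The plan is to handle the three claims separately, from easiest to hardest: the failure of completeness for the Euclidean distance is a one-line counterexample, the log-Euclidean case reduces to an isometry, and the affine-invariant case is the substantive part.

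First, for $(\SPD,\dRM)$ I would exhibit an explicit Cauchy sequence whose limit leaves $\SPD$. Taking $A_k \defeq \diag(1/k,1,\dots,1)$, each $A_k \in \SPD$, and $\norm{A_k-A_l}_F = \abs{1/k-1/l} \to 0$, so $(A_k)_{k\in\N}$ is Cauchy. In $\R^{m\times m}$ it converges to $\diag(0,1,\dots,1)$, which is only positive \emph{semi}-definite and hence lies outside $\SPD$; since Frobenius limits are unique, $(A_k)_{k\in\N}$ has no limit in $\SPD$, so the space is not complete. This simply reflects that $\SPD$ is an open, non-closed cone in $\SYM$.

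Second, for the log-Euclidean metric I would use that, by \autoref{def:MatrixProp}, $\Log:\SPD \to \SYM$ is a diffeomorphism, and that by \autoref{eq:LEmetric} one has $\d_{\SPD}(A,B) = \norm{\Log(A)-\Log(B)}_F = \dRM(\Log(A),\Log(B))$. Thus $\Log$ is a bijective isometry from $(\SPD,\d_{\SPD})$ onto $(\SYM,\norm{\cdot}_F)$. Since $\SYM$ is a finite-dimensional normed space it is complete, and completeness is an isometry invariant: given a $\d_{\SPD}$-Cauchy sequence $(A_k)_{k\in\N}$, the images $(\Log(A_k))_{k\in\N}$ are Cauchy in $\SYM$ and converge to some $S \in \SYM$, whence $A_k = \Exp(\Log(A_k)) \to \Exp(S) \in \SPD$ by continuity of $\Exp$, with $\d_{\SPD}(A_k,\Exp(S)) = \norm{\Log(A_k)-S}_F \to 0$.

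Third, and this is the main obstacle, the affine-invariant case: here $\d_{\mathrm{AI}}$ is a genuine geodesic distance rather than a pullback of a flat norm, so the isometry trick fails. I see two routes. The Riemannian route observes that $(\SPD,\d_{\mathrm{AI}})$ is a Cartan--Hadamard manifold (simply connected, nonpositive curvature) with explicit geodesics $\gamma(t) = A^{1/2}(A^{-1/2}BA^{-1/2})^t A^{1/2}$ that are defined for all $t \in \R$ and remain in $\SPD$; hence the manifold is geodesically complete and Hopf--Rinow yields metric completeness. The more elementary route, which I would prefer in order to reuse the previous part, combines it with the comparison inequality $\norm{\Log(A)-\Log(B)}_F \leq \d_{\mathrm{AI}}(A,B)$ (non-expansiveness of $\Log$ from the affine-invariant geometry into flat log-coordinates, the ``exponential metric increasing'' property). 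From this, any $\d_{\mathrm{AI}}$-Cauchy sequence is $\d_{\SPD}$-Cauchy, hence converges to some $A^\ast \in \SPD$ in the ordinary topology by the previous step; since $\SPD$ is open, the tail of the sequence sits in a neighborhood of $A^\ast$ on which $\d_{\mathrm{AI}}$ and $\norm{\cdot}_F$ are locally Lipschitz equivalent, giving $\d_{\mathrm{AI}}(A_k,A^\ast) \leq C\norm{A_k-A^\ast}_F \to 0$. The hard part is precisely justifying the comparison inequality $\d_{\SPD} \leq \d_{\mathrm{AI}}$ (or, on the Riemannian route, the nonpositive-curvature and geodesic-completeness facts); everything else is routine, and for the purposes of this paper these curvature and completeness facts can be cited from \cite{MinMur18} as already indicated.
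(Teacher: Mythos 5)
Your argument is correct, but it takes a genuinely different route from the paper, which gives no proof at all: the lemma is imported verbatim, with a citation to \cite[Sections 2.4 \& 2.5]{MinMur18}. What your route buys is self-containedness where it is cheap: the counterexample $\diag(1/k,1,\dots,1)$ for $(\SPD,\dRM)$ and the observation that $\Log$ is a surjective isometry from $(\SPD,\d_{\SPD})$ onto the complete space $(\SYM,\norm{\cdot}_F)$ are complete, elementary proofs of two of the three claims, and they only use machinery the paper already has (\autoref{def:MatrixProp}, \autoref{eq:LEmetric}). For the affine-invariant part both of your routes are sound, with two small caveats: the Hopf--Rinow argument needs that \emph{every} maximal geodesic has the form $A^{1/2}\Exp(tX)A^{1/2}$, not just the two-point geodesics you exhibit (standard, but it is the actual content of geodesic completeness), and the ``Cartan--Hadamard'' label is superfluous for it --- geodesic completeness plus Hopf--Rinow suffices, curvature plays no role. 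On your preferred elementary route, the exponential-metric-increasing inequality $\norm{\Log(A)-\Log(B)}_F \le \d_{\mathrm{AI}}(A,B)$ is exactly the one nontrivial input, and it must itself be cited (it is in Bhatia's book on positive definite matrices and in \cite{MinMur18}); so your proof, too, ultimately rests on an external fact --- the difference is that you isolate precisely which fact is needed and prove everything around it, whereas the paper imports the whole lemma wholesale.
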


Different metrics induce different properties on the corresponding regularizer. We compare $\Reg_{[\d_{\SPD}]}^l, \Reg_{[\d_{\R^{m \times m}}]}^l$ and additionally a Sobolev seminorm regularizer in the following. 
 
\begin{remark}[Invariances]
For the the log-Euclidean metric $\d =\d_{\SPD}$ as defined in \autoref{eq:LEmetric} the following holds true.
\begin{itemize}
	\item \emph{Scale invariance}: Let $c> 0$ and $A,B \in \SPD$ and denote by $\mathbb{1}_m$ the identity matrix in $\R^{m \times m}$. Then
	\begin{equation*}
	\d(cA,cB) = \d(c\mathbb{1}_m A,c\mathbb{1}_m B) = \norm{\Log(c\mathbb{1}_m) + \log(A) - \Log(c\mathbb{1}_m) - \Log(B)}_F = \d(A,B).
	\end{equation*} 
	\item \emph{Invariance under inversion}: Let $A,B \in \SPD$. Because $\Log(A^{-1}) = -\Log(A)$ we directly get that
	\begin{equation*}
	\d(A^{-1},B^{-1}) = \d(A,B).
	\end{equation*}
	\item \emph{Unitary invariance} Let $A,B \in \SPD$ and $U$ unitary. Then because of the unitary invariance of the Frobenius norm
	\begin{equation*}
	\d(UAU^T,UBU^T) = \d(A,B).
	\end{equation*}
\end{itemize}
These properties transfer to our regularizer $\Reg_{[\d]}^l$ over $W^{s,p}(\Omega;\SPD)$. 
Clearly, when considering $\Reg_{[\d_{\R^{m \times m}}]}^l$, where $\d_{\R^{m \times m}}(A,B) = \norm{A-B}_F$ is the standard Euclidean distance the first two properties do \emph{not} hold true in contrast to the unitary invariance which is also valid. \\
Although we only work with fractional derivatives of order $s \in (0,1)$ we consider for comparison purposes the regularization functional (see also \autoref{eq:FS} in \autoref{sec:experiments})
\begin{equation*}
w \in W^{1,p}(\Omega,\R^{m \times m}) \mapsto \Theta(w) \defeq \int \limits_{\Omega} \norm{\nabla w(x)}_F^p \dx.
\end{equation*}

None of the invariances above, i.e. scale invariance, invariance under inversion and unitary invariance, is valid for $\Theta$. 

Instead, 
\begin{equation*}
\Theta(w+C) = \Theta(w), \quad \Theta(w) = \Theta(-w),
\end{equation*}
for some constant matrix $\ C \in \R^{m \times m}$, i.e. it is translation and reflection invariant. This, in turn, does not hold (or is not even well-defined) for our regularizer $\Reg_{[\d]}^l$ with the log-Euclidean metric but as well when considering the standard Euclidean distance, i.e. it does hold for $\Reg_{[\d_{\R^{m \times m}}]}^l$. A comparison is shown in \autoref{fig:table}.

\begin{figure}[h!]
\begin{center}
	\begin{tabular}{ |c|c|c|c| } 
		\hline
		 & $\Reg^l_{[\d]}$ &  $\Reg^l_{[\d_{\R^{m \times m}}]}$ & $\Theta$ \\ 
		\hline
		scale invariant & \ding{51} & \ding{55} & \ding{55}  \\ 
		\hline
		inversion invariant & \ding{51} & \ding{55} & \ding{55} \\ 
		\hline
		unitary invariant & \ding{51} & \ding{51} & \ding{55} \\ 
		\hline
		translation invariant & \ding{55} & \ding{51} & \ding{51} \\ 
		\hline
		reflection invariant & \ding{55} & \ding{51} & \ding{51} \\ 
		\hline
	\end{tabular}
\end{center}
\caption{Comparison of invariance properties of our regularizer $\Reg^l_{[\d]}, \Reg^l_{[\d_{\R^{m \times m}}]}$ and the regularization term $\Theta$.}
\label{fig:table}
\end{figure}
\end{remark}

In order to show that \autoref{thm:ex} is applicable for $\F_{[\d]}^{\alpha,\vd}$ defined in \autoref{eq:reg} with  $K = \SPDl_z$ and associated log-Euclidean metric $\d = \d_{\SPD}$ defined in \autoref{eq:LEmetric} 
we have to show that \autoref{ass:gc} and therefore \autoref{ass:metric}, in particular the equivalence stated in \autoref{itm:EquivMetric}, is valid. 
In order to prove that we need the following corollary.

\begin{corollary}\label{cor:eigenvalueBounds}
	Let $A \in \SPDl_z$ (defined in \autoref{eq:SPDb}) with eigenvalues $(\lambda_i)_{i=1}^m$. Then for each $i=1, \dots, m$
	\begin{equation}
	\lambda_i \in [\e^{-z},\e^z],
	\end{equation}
	i.e. $\SPDl_z \subset \SPDs_{[\e^{-z},\e^z]}$ (for the definition of latter set see \autoref{eq:SPDs}).
\end{corollary}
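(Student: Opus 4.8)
The plan is to reduce the matrix-level constraint defining $\SPDl_z$ to a scalar bound on each individual eigenvalue, exploiting the eigenvalue representation of the Frobenius norm of the matrix logarithm that has already been established. Since $A \in \SPDl_z \subseteq \SPD$, its eigenvalues $(\lambda_i)_{i=1}^m$ are strictly positive, so each $\log(\lambda_i)$ is a well-defined real number and the formula in \autoref{eq:FrobeniusEigenvalues} applies verbatim.

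First I would invoke \autoref{lem:PropFrobNorm}, specifically \autoref{eq:FrobeniusEigenvalues}, to rewrite the defining inequality of $\SPDl_z$ from \autoref{eq:SPDb}. Membership $A \in \SPDl_z$ means $\norm{\Log(A)}_F \leq z$, and the eigenvalue formula turns this into $\sum_{i=1}^m \log^2(\lambda_i) \leq z^2$. Second, because every summand $\log^2(\lambda_j)$ is nonnegative, each individual term is dominated by the full sum, so that $\log^2(\lambda_i) \leq z^2$ for every fixed $i$, which is equivalent to $\enorm{\log(\lambda_i)} \leq z$, i.e. $-z \leq \log(\lambda_i) \leq z$. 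Third, applying the strictly increasing scalar exponential to this chain of inequalities yields $\e^{-z} \leq \lambda_i \leq \e^z$, which is precisely $\lambda_i \in [\e^{-z}, \e^z]$. The inclusion $\SPDl_z \subseteq \SPDs_{[\e^{-z},\e^z]}$ then follows immediately from the definition of the latter set in \autoref{eq:SPDs}, since every eigenvalue lies in the stated interval.

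I do not expect any genuine obstacle here: the statement is essentially a direct corollary of the eigenvalue expression for $\norm{\Log(A)}_F$. The only two points requiring a moment's attention are the passage from the bound on the sum to the bound on a single term, which rests solely on the nonnegativity of squares, and the use of the monotonicity of the exponential to transfer the bound on $\log(\lambda_i)$ to a bound on $\lambda_i$ itself; both are routine.
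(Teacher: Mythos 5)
Your proof is correct and follows exactly the paper's own argument: apply \autoref{eq:FrobeniusEigenvalues} to turn $\norm{\Log(A)}_F \leq z$ into $\sum_{i=1}^m \log^2(\lambda_i) \leq z^2$, then bound each term by the sum and exponentiate. You merely spell out the final routine steps that the paper compresses into ``so the claim follows.''
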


\begin{proof}
	If $A \in \SPDl_z$ it holds that $\norm{\Log(A)}_F \leq z$. Using \autoref{eq:FrobeniusEigenvalues} this is equivalent to
	$\sum_{i=1}^{m} \log^2(\lambda_i) \leq z^2$
	so the claim follows.
\end{proof}

Note that the reverse embedding in the previous lemma does \emph{not} hold true. If $A \in \SPDs_{[\e^{-z},\e^z]}$ such that for each eigenvalue $\lambda_i, i=1, \dots, m$ we have that 
$\lambda_i \in \{\e^{-z},\e^z\}$
then $A \in \SPDl_{z\sqrt{m}} \not \subset \SPDl_z$.

Now we can prove that the Euclidean and the log-Euclidean metric are equivalent on $\SPDl_z$.
In particular, we show that $\Log$ is bi-Lipschitz on $\SPDl_z$ and calculate the constant explicitly. Without explicit computation this would follow from the fact that $\Log$ is a diffeomorphism on symmetric, positive definite matrices and that $\SPDl_z$ is a compact subset.

\begin{lemma}\label{lem: Equivalence}
	Let $A,B \in \SPDl_z$ defined in \autoref{eq:SPDb}.
	Then 
	\begin{equation}\label{eq:EquivNorm}
		\frac{1}{\e^z}\norm{A-B}_F^2 \leq \norm{\Log(A)-\Log(B)}_F^2
		\leq \frac{1}{\e^{-z}}\norm{A-B}_F^2.
	\end{equation}
\end{lemma}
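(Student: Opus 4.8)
The plan is to reduce the matrix estimate to a purely scalar one by exploiting that $A$ and $B$ are symmetric and hence admit eigendecompositions $A=\sum_{i=1}^m\lambda_i u_iu_i^T$ and $B=\sum_{j=1}^m\mu_j v_jv_j^T$ with orthonormal eigenbases $(u_i)$ and $(v_j)$ (\autoref{rem:SPDProp} \autoref{itm:Decomp}). By \autoref{cor:eigenvalueBounds} all eigenvalues satisfy $\lambda_i,\mu_j\in[\e^{-z},\e^z]$. Since $\Log A=\sum_i\log(\lambda_i)\,u_iu_i^T$ and $\Log B=\sum_j\log(\mu_j)\,v_jv_j^T$ act on the \emph{same} eigenprojections, the guiding idea is to express both $\norm{A-B}_F^2$ and $\norm{\Log A-\Log B}_F^2$ through the single family of nonnegative weights $c_{ij}\defeq\inner{u_i}{v_j}^2$, which measures the overlap of the two eigenbases and is doubly stochastic, $\sum_j c_{ij}=\sum_i c_{ij}=1$.

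First I would establish the elementary identity that, for symmetric $A,B$ and any scalar function applied spectrally,
\begin{equation*}
\norm{A-B}_F^2=\sum_{i,j}(\lambda_i-\mu_j)^2\,c_{ij},\qquad
\norm{\Log A-\Log B}_F^2=\sum_{i,j}\bigl(\log\lambda_i-\log\mu_j\bigr)^2 c_{ij}.
\end{equation*}
Both follow by expanding the Frobenius inner product $\inner{A}{B}=\mathrm{trace}(A^TB)$, using $\inner{u_iu_i^T}{v_jv_j^T}=\inner{u_i}{v_j}^2$ together with the orthonormality relations above; the second identity is the same computation since applying $\log$ only rescales the eigenvalues and leaves the projections $u_iu_i^T$ untouched. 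The second step is then scalar: for $\lambda,\mu\in[\e^{-z},\e^z]$ the mean value theorem gives $\log\lambda-\log\mu=(\lambda-\mu)/\xi$ with $\xi$ between $\lambda$ and $\mu$, so $\xi\in[\e^{-z},\e^z]$ and $1/\xi\in[\e^{-z},\e^z]$; hence $\e^{-z}\enorm{\lambda-\mu}\le\enorm{\log\lambda-\log\mu}\le\e^{z}\enorm{\lambda-\mu}$ for every pair of eigenvalues (the case $\lambda=\mu$ being covered by $\log'(\lambda)=1/\lambda$). Squaring these scalar bounds and inserting them termwise into the two identities, using $c_{ij}\ge0$, delivers the desired two-sided estimate.

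The step I expect to be the main obstacle is the non-commutativity: $A$ and $B$ are in general \emph{not} simultaneously diagonalisable, so a naive eigenvalue-by-eigenvalue comparison is unavailable. The device that removes this difficulty is precisely the weight identity of the previous paragraph, which rewrites each squared Frobenius norm as the \emph{same} $c_{ij}$-weighted sum over eigenvalue pairs; verifying this identity (and the doubly stochastic property of the weights) is the only non-routine ingredient, while the remaining estimates are scalar and the use of the mean value theorem is legitimate precisely because \autoref{cor:eigenvalueBounds} confines all eigenvalues to $[\e^{-z},\e^z]$.

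I note that carrying the squared scalar bounds $\e^{-2z}(\lambda_i-\mu_j)^2\le(\log\lambda_i-\log\mu_j)^2\le\e^{2z}(\lambda_i-\mu_j)^2$ through the two identities produces the factors $\e^{\mp 2z}$ in front of $\norm{A-B}_F^2$; equivalently, the estimate holds with constants $\e^{\mp z}$ when stated for the norms $\norm{A-B}_F$ and $\norm{\Log A-\Log B}_F$ themselves rather than their squares, which is how \autoref{eq:EquivNorm} should be read.
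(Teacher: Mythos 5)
Your proposal is correct and follows essentially the same route as the paper's own proof: eigendecomposition of $A$ and $B$, rewriting both squared Frobenius norms as sums over eigenvalue pairs weighted by the squared overlaps of the two eigenbases (the paper reaches the same identity via unitary invariance, reducing to $\norm{C\Lambda_A-\Lambda_B C}_F^2$ with $C=V^TU$, which is exactly what you obtain by expanding the trace), followed by the mean value theorem for $\log$ on $[\e^{-z},\e^z]$.

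Your closing remark is moreover a genuine correction of the paper, not just a reading convention. Squaring the scalar relation $\enorm{\log\lambda-\log\mu}=\enorm{\lambda-\mu}/\xi$ with $\xi\in[\e^{-z},\e^z]$ gives the factor $\e^{-2z}$, whereas the paper's step \autoref{eq:Equiv3} asserts $1/\xi_{ij}^2\ge 1/\e^{z}$; this fails, for instance, when both eigenvalues are close to $\e^z$, where $1/\xi_{ij}^2\approx\e^{-2z}<\e^{-z}$. Accordingly, \autoref{eq:EquivNorm} as stated, with squared norms and constants $\e^{\mp z}$, is false (already for $m=1$: take $A=\e^z$, $B=\e^z-\epsilon$); the constants must be $\e^{\mp 2z}$, or equivalently the inequality holds with $\e^{\mp z}$ for the unsquared norms, exactly as you prove it. The downstream use of the lemma — equivalence of the log-Euclidean and Euclidean metrics on $\SPDl_z$, where the value of the constants is irrelevant — is unaffected.
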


\begin{proof}
		
	Since $A$ and $B$ are symmetric and positive definite they can be factorized using their eigendecomposition, see \autoref{rem:SPDProp} \autoref{itm:Decomp}. Hence, we can write
	\begin{equation}\label{eq:ED}
		A = U\Lambda_AU^T, \quad B = V\Lambda_BV^T,
	\end{equation}
	where $U,V \in \R^{m \times m}$ are unitary matrices and $\Lambda_A,\Lambda_B$ are diagonal matrices whose entries are the corresponding positive eigenvalues $(\lambda_1, \dots, \lambda_m)$ of $A$ and $(\mu_1, \dots, \mu_m)$ of $B$, respectively. 
	By \autoref{cor:eigenvalueBounds} it holds that 
	$\lambda_i,\mu_i \in [\e^{-z},\e^z]$
	for all $i = 1, \dots m$.
	
	We consider two cases: 
	
	\textbf{Case 1:} We assume that \emph{all} eigenvalues of $A$ and $B$ are equal, i.e. they have the same one-dimensional spectrum $\mathrm{spec}(A) = \mathrm{spec}(B) = \set{\lambda}$, meaning that $\Lambda \defeq \lambda \mathbb{1}_m \defeq \Lambda_A = \Lambda_B$.	
	This in turn gives that
	\begin{align*}
		&\norm{A-B}_F^2 = \norm{U \Lambda U^T-V \Lambda V^T }_F^2 = \norm{V^T U \Lambda - \Lambda V^T U}_F^2 = \norm{V^T U \lambda - \lambda V^T U}_F^2 = 0, \\
		&\norm{\Log(A)-\Log(B)}_F^2 = \norm{V^T U\log(\Lambda) - \log(\Lambda)V^T U}_F^2 
		= \norm{V^T U\log(\lambda) - \log(\lambda)V^T U}_F^2= 0,
	\end{align*} 

	using the unitary invariance of the Frobenius norm as stated in \autoref{eq:OrthogonalInv} and the properties of the matrix logarithm in \autoref{def:MatrixProp} \autoref{itm:Log} in the second equation. Thus \autoref{eq:EquivNorm} is trivially fulfilled.  
	
	\textbf{Case 2:}
	We now assume that there exists at least two different eigenvalues $\lambda_i \neq \mu_j, \ i,j \in \{1,\dots,m\}$ of $A$ and $B$.
	
	We show the lower inequality 
	$\frac{1}{\e^z}\norm{A-B}_F^2 \leq \norm{\Log(A)-\Log(B)}_F^2$ in \autoref{eq:EquivNorm}. The upper inequality can be done analogously. \\
	By \autoref{eq:OrthogonalInv} and the properties of the matrix logarithm in \autoref{def:MatrixProp} \autoref{itm:Log} it follows that
	\begin{align}\label{eq:Equiv1}
		\norm{\Log(A)-\Log(B)}_F^2 
		&= \norm{U \Log(\Lambda_A) U^T - V \Log(\Lambda_B) V^T}_F^2
		= \norm{V^T U \Log(\Lambda_A) - \Log(\Lambda_B) V^T U}_F^2 \nonumber\\
		&= \norm{C (\diag(\log(\lambda_1), \dots, \log(\lambda_m))) - \diag(\log(\mu_1), \dots, \log(\mu_m)) C}_F^2,
	\end{align}
	where $C \defeq V^TU$.
	Using the definition of the Frobenius norm in \autoref{eq:FrobNorm} we obtain further that
	\begin{equation}\label{eq:Equiv2}
		\norm{C (\diag(\log(\lambda_1), \dots, \log(\lambda_m))) - \diag(\log(\mu_1), \dots, \log(\mu_m)) C}_F^2 =
		\sum_{i,j=1}^{m} \bigg| c_{ij} \big(\log(\lambda_j) - \log(\mu_i) \big) \bigg|^2. 
	\end{equation}
	Indices $(i,j) \in \{1, \dots,m\}$ for which $\lambda_j = \mu_i$ do not contribute to the sum in \autoref{eq:Equiv2} (and do not change the following calculation) so we define $\mathcal{I} \defeq \{ (i,j) \in \{1, \dots,m\} \ : \ \lambda_j \neq \mu_i \}$ as the set of such indices  $(i,j) \in \{1, \dots,m\}$ for which we have $\lambda_j \neq \mu_i$.
	
	From the mean value theorem it follows that for every $(i,j) \in \mathcal{I}$ 
	there exists some 
	\begin{equation*}
	\xi_{ij} \in
		\begin{cases}
			(\lambda_j,\mu_i) \in [\e^{-z},e^z] & \mbox{ if } \lambda_j < \mu_i, \\
			(\mu_i, \lambda_j) \in [\e^{-z},e^z] & \mbox{ if }
			\mu_i < \lambda_j,
		\end{cases}
	\end{equation*}
    such that 
	\begin{equation}\label{eq:Equiv3}
	\sum_{(i,j) \in \mathcal{I}} \bigg| c_{ij} \big(\log(\lambda_j) - \log(\mu_i) \big) \bigg|^2
	= \sum_{(i,j) \in \mathcal{I}} \bigg| c_{ij} \frac{1}{\xi_{ij}} (\lambda_j - \mu_i) \bigg|^2
	\geq \frac{1}{\e^z} \sum_{(i,j) \in \mathcal{I}} \bigg| c_{ij} (\lambda_j - \mu_i) \bigg|^2.
	\end{equation}
	Further we can write
	\begin{align}\label{eq:Equiv4}
	\frac{1}{\e^z} \sum_{(i,j) \in \mathcal{I}} \bigg| c_{ij} (\lambda_j - \mu_i) \bigg|^2
	&= \norm{C (\diag(\lambda_1, \dots, \lambda_m)) - \diag(\mu_1, \dots,\mu_m) C}_F^2 \nonumber \\
	&= \frac{1}{\e^z}  \| C \Lambda_A -\Lambda_B C \|_F^2.
	\end{align}
	Combining \autoref{eq:Equiv1}, \autoref{eq:Equiv2}, \autoref{eq:Equiv3}, \autoref{eq:Equiv4}, the definition of $C = V^TU$ and \autoref{eq:OrthogonalInv} we obtain that
	\begin{equation*}
	\norm{\Log(A)-\Log(B)}_F^2 \geq \frac{1}{\e^z}  \norm{C \Lambda_A -\Lambda_B C}_F^2 = \frac{1}{\e^z}  \norm{U\Lambda_AU^T - V\Lambda_BV^T}_F^2
	=  \frac{1}{\e^z}  \norm{A-B}_F^2
	\end{equation*}
	which finishes the proof.
\end{proof}	

The previous \autoref{lem: Equivalence} proves that \autoref{ass:metric} \autoref{itm:EquivMetric} is valid. This together with \autoref{rem:AssFulfilled} proves the following theorem:

\begin{theorem}\label{thm:ExStabConv}
	Let $K = \SPDl_z$ and $\d = \d_{\SPD}$ as in \autoref{eq:LEmetric}. 
Then the functional $\F_{[\d]}^{\alpha,\vd}$ as defined in \autoref{eq:reg} over $W^{s,p}(\Omega;\SPDl_z)$ satisfies the assertions of \autoref{thm:ex}. In particular, it attains a minimizer and fulfills a stability and convergence result.
\end{theorem}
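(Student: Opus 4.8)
The plan is to reduce the assertion to the abstract regularization result \autoref{thm:ex} by checking that its hypotheses hold for the concrete choice $K=\SPDl_z$ and $\d=\d_{\SPD}$. By \autoref{rem:AssFulfilled}, the full conclusion of \autoref{thm:ex} (existence, stability, convergence) follows as soon as \autoref{ass:gc} is in force, and this in turn is guaranteed once \autoref{ass:metric} is verified for the present $K$ and $\d$. Hence the entire argument collapses to an item-by-item inspection of \autoref{ass:metric}.

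First I would dispose of the items that are immediate from the standing setup: the conditions on $\Omega$ and $D$, the parameter ranges for $p,s,l$, and the identification of $\dRM$ with the Frobenius distance are all part of the ambient framework and require nothing new. It then remains to confirm \autoref{ass:metric} \autoref{itm:K} (that $K=\SPDl_z$ is nonempty and closed) and \autoref{ass:metric} \autoref{itm:EquivMetric} (that $\d_{\SPD}$ is a metric on $K$ equivalent to $\dRM$).

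For \autoref{itm:K}, nonemptiness is clear since $\mathbb{1}_m\in\SPDl_z$. Closedness I would obtain from \autoref{cor:eigenvalueBounds}: every $A\in\SPDl_z$ has spectrum contained in $[\e^{-z},\e^z]$, so a Frobenius-convergent sequence in $\SPDl_z$ has eigenvalues bounded uniformly away from $0$ and from $\infty$; by continuity of the spectrum the limit is again symmetric and positive definite, and $\norm{\Log(\cdot)}_F\le z$ passes to the limit by continuity of $\Log$ on this compact spectral range. The same corollary shows that $K$ is bounded, which is the property actually exploited for the weak pre-compactness of the level sets underlying \autoref{ass:gc}.

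The crux is \autoref{itm:EquivMetric}, and this is precisely the content of \autoref{lem: Equivalence}, which yields the explicit two-sided estimate $\e^{-z}\norm{A-B}_F^2\le\norm{\Log(A)-\Log(B)}_F^2\le\e^{z}\norm{A-B}_F^2$ on $\SPDl_z$, i.e. that $\Log$ is bi-Lipschitz there. This equivalence is the only nontrivial step and the main obstacle; with it established, \autoref{ass:metric} \autoref{itm:EquivMetric} holds, \autoref{rem:AssFulfilled} delivers \autoref{ass:gc}, and \autoref{thm:ex} then applies verbatim to $\F_{[\d]}^{\alpha,\vd}$ over $W^{s,p}(\Omega;\SPDl_z)$, giving existence of a minimizer together with the stability and convergence statements.
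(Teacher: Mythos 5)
Your proposal is correct and follows essentially the same route as the paper: the metric equivalence established in \autoref{lem: Equivalence} verifies \autoref{ass:metric} \autoref{itm:EquivMetric}, whence \autoref{rem:AssFulfilled} yields \autoref{ass:gc} and \autoref{thm:ex} applies verbatim. Your additional explicit check of \autoref{ass:metric} \autoref{itm:K} (nonemptiness via $\mathbb{1}_m$ and closedness of $\SPDl_z$ via \autoref{cor:eigenvalueBounds} and continuity of $\Log$ on matrices with spectrum in $[\e^{-z},\e^z]$) is sound and simply makes explicit a point the paper leaves implicit.
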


\subsection{Uniqueness}
\label{ssec:Uniqueness}
So far we showed that the functional $\F_{[\d]}^{\alpha,\vd}$ as defined in \autoref{eq:reg} over $W^{s,p}(\Omega;\SPDl_z)$ using the log-Euclidean metric $\d = \d_{\SPD}$ as in \autoref{eq:LEmetric} attains a minimizer. In this subsection we prove that the minimum is unique. \\
To this end we consider the symmetric, positive definite matrices from a differential geometric point of view. 

The following lemma can be found in \cite{FilArsPenAya07a} and also \cite{PenSomFle19}.
\begin{lemma}
	The space $(\SPD,\d)$ where $\d = \d_{\SPD}$ denotes the log-Euclidean metric as defined in \autoref{eq:LEmetric}
	is a complete, connected Riemmanian manifold with zero sectional curvature.
\end{lemma}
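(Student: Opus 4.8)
The plan is to exploit that the matrix logarithm $\Log \colon \SPD \to \SYM$ is a diffeomorphism (\autoref{def:MatrixProp} \autoref{itm:Log}) and to transport the flat Euclidean structure of the vector space $\SYM$ back to $\SPD$. The set $\SYM$, equipped with the Frobenius inner product $\langle X, Y\rangle_F = \mathrm{trace}(X^T Y)$, is isometric to $\R^{N}$ with $N = m(m+1)/2$; that is, it is a flat, complete Euclidean space. The whole statement will then follow from the fact that Riemannian isometries preserve completeness, connectedness and sectional curvature.

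First I would record the smooth manifold structure: since positive definiteness is an open condition, $\SPD$ is an open subset of the vector space $\SYM$ and hence a smooth manifold of dimension $N$. I would then define the Riemannian metric $g$ on $\SPD$ as the pullback $g \defeq \Log^\ast \langle\cdot,\cdot\rangle_F$; by construction $\Log \colon (\SPD, g) \to (\SYM, \langle\cdot,\cdot\rangle_F)$ is a Riemannian isometry. The key verification is that the geodesic distance $d_g$ induced by $g$ coincides with $\d_{\SPD}$: because $\Log$ is an isometry, the geodesics of $(\SPD,g)$ are exactly the $\Log$-preimages of straight line segments in $\SYM$, so $d_g(A,B)$ equals the length of the segment joining $\Log(A)$ and $\Log(B)$, which is $\norm{\Log(A)-\Log(B)}_F = \d_{\SPD}(A,B)$. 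This identifies $(\SPD, \d_{\SPD})$ with the length metric space of the Riemannian manifold $(\SPD, g)$.

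With the isometry in hand the three asserted properties follow. Sectional curvature is a Riemannian isometry invariant, and $(\SYM, \langle\cdot,\cdot\rangle_F)$ has vanishing curvature everywhere, so $(\SPD,g)$ is flat. Connectedness is immediate since $\SPD$ is a convex cone in $\SYM$, hence path-connected (alternatively, because $\Log$ is a homeomorphism onto the connected set $\SYM$). Completeness of $(\SPD, \d_{\SPD})$ was already asserted earlier in this section and re-follows here: $(\SYM, \langle\cdot,\cdot\rangle_F)$ is a complete metric space and $\Log$ is a metric isometry onto it, so Cauchy sequences in $\SPD$ correspond to Cauchy sequences in $\SYM$, which converge; geodesic completeness then also follows via the Hopf--Rinow theorem.

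I expect the only genuine subtlety to be the identification $d_g = \d_{\SPD}$, i.e.\ confirming that the abstract geodesic distance of the pullback metric is precisely the explicitly given log-Euclidean formula rather than merely equivalent to it. Once it is clear that $\Log$ is a global isometry onto the whole (geodesically convex) Euclidean space $\SYM$, this identification and all three manifold properties are formal consequences of isometry invariance, so no hard curvature computation is required.
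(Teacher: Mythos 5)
Your proposal is correct and matches the approach the paper itself relies on: the paper offers no proof of this lemma, merely citing the log-Euclidean literature, and in those sources the result is established exactly by your route, namely transporting the flat, complete Euclidean structure of $\SYM$ to $\SPD$ through the diffeomorphism $\Log$. Your identification of $\d_{\SPD}$ with the geodesic distance of the pullback metric is the one genuinely non-formal step, and you handle it properly (isometries preserve curve lengths, hence geodesic distances), after which flatness, completeness and connectedness follow by isometry invariance as you state.
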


In other words $(SPD,\d)$ is a \emph{flat Hadamard manifold} 
and therefore in particular a \emph{Hadamard space}.
The last property guarantees that the metric $\d$ is \emph{geodesically convex} \cite[Cor. 2.5]{Stu03}, i.e. let $\gamma,\eta: [0,1] \rarr \SPD$ be two geodesics, then
\begin{equation}\label{eq:convex}
\d(\gamma_t,\eta_t) \leq t \d(\gamma_0,\eta_0) + (1-t)\d(\gamma_1,\eta_1).
\end{equation}
Moreover, $\d^p$ is strictly convex in one argument for $p>1$ (\cite[Prop. 2.3]{Stu03} \& \cite [Ex. 2.2.4]{Bac14}), i.e. for $M \in \SPD$ fix and $\gamma_0 \neq \gamma_1$
\begin{equation}\label{eq:strictconvex}
\d^p(\gamma_t,M) < t\d^p(\gamma_0,M) + (1-t)\d^p(\gamma_1,M).
\end{equation}
The following result states that connecting geodesics between two points in $\SPDl_z$ stay in this set.
\begin{lemma}\label{lem:geodesics} Let \autoref{ass:gc} hold.  Let $K = \SPDl_z$ and $\d = \d_{\SPD}$ be the log-Euclidean metric as defined in \autoref{eq:LEmetric}. 
	Let $w^*,w^\diamond \in W^{s,p}(\Omega;\SPDl_z) \subset W^{s,p}(\Omega;\SPD)$. For $\gamma: \Omega \times [0,1] \rarr \SPD$ define
	\begin{equation*}
	\gamma^x \defeq \gamma(x, \cdot) : [0,1] \rarr W^{s,p}(\Omega;\SPD),
	\end{equation*}
	as a connecting geodesic between $\gamma^x(0) = w^*(x)$ and $\gamma^x(1) = w^\diamond(x)$ and
	\begin{equation*}
	\gamma_t \defeq \gamma(\cdot, t) : \Omega \rarr W^{s,p}(\Omega;\SPD), 
	\end{equation*}
	as the evaluation of the geodesic between $w^*(x)$ and $w^\diamond(x)$ at time $t$ for $x \in \Omega$.  
	Then $\gamma_t \in W^{s,p}(\Omega;\SPDl_z)$.
\end{lemma}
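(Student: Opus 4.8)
The plan is to exploit the flatness of $(\SPD,\d)$. Since $\Log:\SPD\to\SYM$ is a diffeomorphism and the log-Euclidean metric is, by \autoref{eq:LEmetric}, the pullback of the Frobenius metric on $\SYM$, the space $(\SPD,\d)$ is isometric to the Euclidean space $(\SYM,\norm{\cdot}_F)$; as it is a flat Hadamard manifold, the connecting geodesic between two points is unique and is a straight line in the logarithmic chart. Hence the unique connecting geodesic between $w^*(x)$ and $w^\diamond(x)$ is
\[
\gamma^x(t)=\Exp\big((1-t)\Log(w^*(x))+t\,\Log(w^\diamond(x))\big),
\]
equivalently $\Log(\gamma_t(x))=(1-t)\Log(w^*(x))+t\,\Log(w^\diamond(x))$. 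First I would record this explicit formula, which reduces the whole statement to linear interpolation in the logarithmic domain and separates the two things to be checked: the pointwise membership $\gamma_t(x)\in\SPDl_z$ and the fractional Sobolev regularity of $\gamma_t$.

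For the pointwise constraint, I would apply the triangle inequality to the linear interpolation above: for a.e.\ $x$,
\[
\norm{\Log(\gamma_t(x))}_F\leq(1-t)\norm{\Log(w^*(x))}_F+t\,\norm{\Log(w^\diamond(x))}_F\leq(1-t)z+tz=z,
\]
using $w^*(x),w^\diamond(x)\in\SPDl_z$ (\autoref{eq:SPDb}), so $\gamma_t(x)\in\SPDl_z$ a.e. Measurability of $x\mapsto\gamma_t(x)$ follows from the continuity of $\Log$ and $\Exp$ together with the measurability of $w^*,w^\diamond$, and $\gamma_t\in L^p(\Omega;\R^{m \times m})$ because $\SPDl_z\subset\SPDs_{[\e^{-z},\e^z]}$ is bounded (\autoref{cor:eigenvalueBounds}) and $\Omega$ is bounded.

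The substantive step is the finiteness of the seminorm $\abs{\gamma_t}_{W^{s,p}(\Omega;\R^{m \times m})}$, and here I would invoke the bi-Lipschitz equivalence of \autoref{lem: Equivalence} twice. Since $\gamma_t(x),\gamma_t(y)\in\SPDl_z$, the upper bound in \autoref{eq:EquivNorm} gives $\norm{\gamma_t(x)-\gamma_t(y)}_F\leq\e^{z/2}\d(\gamma_t(x),\gamma_t(y))$; the triangle inequality applied to the interpolated logarithms yields $\d(\gamma_t(x),\gamma_t(y))\leq(1-t)\d(w^*(x),w^*(y))+t\,\d(w^\diamond(x),w^\diamond(y))$; and the lower bound in \autoref{eq:EquivNorm} converts each distance back, e.g.\ $\d(w^*(x),w^*(y))\leq\e^{z/2}\norm{w^*(x)-w^*(y)}_F$. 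Chaining these estimates,
\[
\norm{\gamma_t(x)-\gamma_t(y)}_F\leq\e^{z}\big[(1-t)\norm{w^*(x)-w^*(y)}_F+t\,\norm{w^\diamond(x)-w^\diamond(y)}_F\big].
\]
Raising to the $p$-th power, using convexity of $r\mapsto r^p$ on the convex combination, dividing by $\enorm{x-y}^{n+ps}$ and integrating over $\Omega\times\Omega$ then gives
\[
\abs{\gamma_t}_{W^{s,p}(\Omega;\R^{m \times m})}^p\leq\e^{zp}\big[(1-t)\abs{w^*}_{W^{s,p}(\Omega;\R^{m \times m})}^p+t\,\abs{w^\diamond}_{W^{s,p}(\Omega;\R^{m \times m})}^p\big]<\infty,
\]
since $w^*,w^\diamond\in W^{s,p}(\Omega;\SPD)$. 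Together with the pointwise membership this yields $\gamma_t\in W^{s,p}(\Omega;\SPDl_z)$.

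I expect the main obstacle to be the seminorm estimate rather than the geometric part: the constraint $\gamma_t(x)\in\SPDl_z$ is immediate from convexity of the norm in the flat logarithmic chart, but transferring fractional Sobolev regularity through the nonlinear maps $\Log$ and $\Exp$ is possible only because \autoref{lem: Equivalence} supplies uniform bi-Lipschitz control on $\SPDl_z$. The care required is to guarantee that all four values $w^*(x),w^\diamond(x),w^*(y),w^\diamond(y)$ — and hence $\gamma_t(x),\gamma_t(y)$ — lie in $\SPDl_z$, so that the equivalence constants are uniform and independent of $x,y$ and of $t\in[0,1]$.
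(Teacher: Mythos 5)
Your proposal is correct and follows essentially the same route as the paper's proof: the same explicit representation of the geodesic as linear interpolation in the logarithmic chart, the same convexity argument for the pointwise bound $\norm{\Log(\gamma_t(x))}_F \leq z$ (the paper phrases this as geodesic convexity of $\d(\cdot,\mathbb{1}_m)$, which is exactly your triangle inequality), and the same two-sided use of \autoref{lem: Equivalence} to transfer the fractional seminorm estimate through $\Log$ and $\Exp$. Your handling of the $L^p$ part by direct boundedness of $\SPDl_z$ is a minor simplification of the paper's Jensen-inequality detour, but the substance and the key lemma are identical.
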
 

\begin{proof}
	We split the proof into two parts. First we show that $\gamma_t$ maps indeed into $\SPDl_z$. Afterwards we prove that it actually lies in $W^{s,p}(\Omega;\SPDl_z)$. \\
	$\gamma_t$ is a geodesic connecting $\gamma_0(x) = w^*(x)$ and $\gamma_1(x) = w^\diamond(x)$ for $x \in \Omega$. Therefore (\cite[Chapter 3.5]{TurSri16} and \cite{ FilArsPenAya07a}) it can be written as
	\begin{equation*}
	\gamma_t(x) = \Exp\big( t \Log(w^*(x)) + (1-t) \Log(w^\diamond(x)) \big)  
	\end{equation*}
	which is equivalent to
	\begin{equation*}
	\Log(\gamma_t(x)) = t \Log(w^*(x)) + (1-t) \Log(w^\diamond(x)). 
	\end{equation*}
	We denote by $\mathbb{1}_m$ the identity matrix of size $m \times m$ and note that $\norm{\Log(\gamma_t(x))}_F  = \d(\gamma_0(x),\mathbb{1}_m)$, where the Log-Euclidean metric $\d$ is as defined in \autoref{eq:LEmetric}. Because of the geodesic convexity, see \autoref{eq:convex}, we obtain that 
	\begin{equation*}
	\norm{\Log(\gamma_t(x))}_F = \d(\gamma_t(x),\mathbb{1}_m) 
	\leq t \d(\gamma_0(x),\mathbb{1}_m) + (1-t) \d(\gamma_1(x),\mathbb{1}_m) 
	\leq t z + (1-t) z = z
	\end{equation*}
	because $w^*,w^\diamond \in W^{s,p}(\Omega;\SPDl_z)$, i.e. $\norm{\Log(w^*)}_F \leq z, \norm{\Log(w^\diamond)}_F \leq z$.
	 This shows that $\gamma_t$ maps into $\SPDl_z$. 
	
	Next need to prove that actually $\gamma_t \in W^{s,p}(\Omega;\SPDl_z)$, i.e. that
	\begin{align*}
	\norm{\gamma_t}^p_{W^{s,p}(\Omega;\R^{m \times m})} &= 
	\int_{\Omega} \norm{\gamma_t(x)}_F^p \dx +
	\int_{\Omega\times \Omega} \frac{\norm{\gamma_t(x)-\gamma_t(y)}_F^p}{\enorm{x-y}^{n+ps}} \dxy \\
	&= \int_{\Omega} \norm{\gamma_t(x)}_F^p \dx  + \Reg_{[\d_{\R^{m \times m}}]}^0(\gamma_t)
	< \infty.
	\end{align*}
	We
	obtain by Jensen's inequality that
	\begin{equation*}
	\norm{\gamma_t}^p_{W^{s,p}(\Omega;\R^{m \times m})}
	\leq 2^{p-1} \bigg(\int_{\Omega} \norm{\gamma_t(x) - \mathbb{1}_m}_F^p \dx +
	\int_{\Omega} \norm{\mathbb{1}_m}_F^p \dx \bigg)
	+ \Reg_{[\d_{\R^{m \times m}}]}^0(\gamma_t).
	\end{equation*}
	Using \autoref{eq:EquivNorm} it follows that
	\begin{align*}
	&2^{p-1} \bigg(\int_{\Omega} \norm{\gamma_t(x) - \mathbb{1}_m}_F^p \dx +
	\int_{\Omega} \norm{\mathbb{1}_m}_F^p \dx \bigg)
	+ \Reg_{[\d_{\R^{m \times m}}]}^0(\gamma_t) \\
	&\leq 2^{p-1} (\e^z)^{p/2} \bigg( \int_{\Omega} \d^p(\gamma_t(x),\mathbb{1}_m) \dx + \Reg_{[\d]}^0(\gamma_t) 
	\bigg) + C,
	\end{align*}
	where $C \defeq 2^{p-1}|\Omega|$. By using the geodesic convexity stated in \autoref{eq:convex} and \autoref{eq:strictconvex} and again the equivalence of the Euclidean and the log-Euclidean metric (see \autoref{lem: Equivalence}) we get that 
	\begin{align*}
	& 2^{p-1} (\e^z)^{p/2} \bigg( \int_{\Omega} \d^p(\gamma_t(x),\mathbb{1}_m) \dx + \Reg_{[\d]}^0(\gamma_t)
	\bigg) + C \\
	&\leq 2^{p-1} (\e^z)^{p/2} \bigg( t \int_{\Omega} \d^p(\gamma_0(x),\mathbb{1}_m) \dx +
	(1-t) \int_{\Omega} \d^p(\gamma_1(x),\mathbb{1}_m) \dx  \\
	& +
	t \Reg_{[\d]}^0(\gamma_0)
	+ (1-t) \Reg_{[\d]}^0(\gamma_1) 
	\bigg) + C \\
	& \leq 2^{p-1} \e^{pz} 
	\bigg( t \int_{\Omega} \norm{\gamma_0(x)-\mathbb{1}_m}_F^p \dx +
	(1-t) \int_{\Omega} \norm{\gamma_1(x)-\mathbb{1}_m}_F^p \dx  \\
	& +
	t \Reg_{[\d_{\R^{m \times m}}]}^0(\gamma_0)
	+ (1-t) \Reg_{[\d_{\R^{m \times m}}]}^0(\gamma_1)
	\bigg) + C. 
	\end{align*}
	The last expression is finite because of the assumption that $w^*,w^\diamond \in W^{s,p}(\Omega;\SPDl_z)$.
\end{proof}

Now we can state the uniqueness result.
\begin{theorem}\label{thm:uniqueness} Let \autoref{ass:gc} hold.
	 Let $K = \SPDl_z$ and $\d = \d_{\SPD}$ the log-Euclidean metric as defined in \autoref{eq:LEmetric}. Then the functional $\F_{[\d]}^{\alpha,\vd}$ as defined in \autoref{eq:reg} on $W^{s,p}(\Omega;\SPDl_z)$ attains a unique minimizer.
\end{theorem}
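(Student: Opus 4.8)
The plan is to argue by contradiction, upgrading the geodesic convexity of the log-Euclidean distance to \emph{strict} convexity of $\F_{[\d]}^{\alpha,\vd}$ along connecting geodesics. Existence of a minimizer is already guaranteed by \autoref{thm:ExStabConv}, so suppose $w^{*},w^{\diamond}\in W^{s,p}(\Omega;\SPDl_z)$ are both minimizers with $w^{*}\neq w^{\diamond}$. Let $\gamma_t$ be the pointwise connecting geodesic from $w^{*}$ to $w^{\diamond}$; by \autoref{lem:geodesics} we have $\gamma_t\in W^{s,p}(\Omega;\SPDl_z)$ for all $t$, so each $\gamma_t$ is an admissible competitor. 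The goal is to show $\F_{[\d]}^{\alpha,\vd}(\gamma_t) < t\,\F_{[\d]}^{\alpha,\vd}(w^{*}) + (1-t)\,\F_{[\d]}^{\alpha,\vd}(w^{\diamond})$ for $t\in(0,1)$; since the right-hand side equals the minimal value, this contradicts minimality and forces $w^{*}=w^{\diamond}$.

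Both terms inherit their convexity from the flat (log-linear) structure. Using the explicit geodesic formula from \autoref{lem:geodesics}, $\Log(\gamma_t(x))$ is an affine function of $t$, so that $\d^{p}(\gamma_t(x),\vd(x))$ and $\d^{p}(\gamma_t(x),\gamma_t(y))$ are each of the form $\norm{a+t\,b}_F^{p}$ for suitable matrices $a,b$; convexity in $t$ then follows from \autoref{eq:convex} together with convexity of $r\mapsto r^{p}$. For the fidelity term I would apply the strict convexity \autoref{eq:strictconvex} with $M=\vd(x)$: the integrand is strictly convex at every $x$ with $w^{*}(x)\neq w^{\diamond}(x)$. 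Hence, in the denoising case ($D=\emptyset$) and more generally whenever $w^{*}\neq w^{\diamond}$ on a positive-measure subset of $\Omega\setminus D$, the fidelity term alone is strictly convex and the contradiction is immediate.

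The remaining and genuinely delicate case is when $w^{*}=w^{\diamond}$ a.e.\ on $\Omega\setminus D$ but $w^{*}\neq w^{\diamond}$ on a positive-measure subset of the inpainting region $D$, where the fidelity term is blind and strict convexity must come from the regularizer. Writing $\phi\defeq\Log(w^{*})-\Log(w^{\diamond})$, the log-linearization turns the regularizer integrand into $t\mapsto\norm{a+t(\phi(x)-\phi(y))}_F^{p}$, which for $p>1$ is strictly convex exactly when $\phi(x)\neq\phi(y)$. Since $\abs{\Omega\setminus D}>0$ and $\phi=0$ a.e.\ there while $\phi\neq0$ on a positive-measure set, $\phi$ cannot be a.e.\ constant. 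It then remains to produce a positive-measure set of pairs $(x,y)$ on which simultaneously $\phi(x)\neq\phi(y)$ and the weight $\rho^{l}(x-y)/\enorm{x-y}^{n+ps}$ is positive; integrating the strict pointwise inequality over this set makes $\Reg_{[\d]}^{l}$ strictly convex along $\gamma_t$, closing the argument.

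This last step is where I expect the main obstacle. For $l=0$ the weight is positive for all $x\neq y$, so non-constancy of $\phi$ immediately yields a positive-measure set of strict-convexity pairs. For $l=1$ the mollifier restricts to $\enorm{x-y}\le\eta$, and one must rule out that $\phi(x)=\phi(y)$ for a.e.\ such nearby pair; here I would combine the positivity of $\rho$ on a ball about the origin (\autoref{def:mol}) with the connectedness of $\Omega$ (\autoref{ass:metric}), since local a.e.\ constancy of $\phi$ on a connected domain would force global a.e.\ constancy, contradicting the previous paragraph. Everything else reduces to the flat Hadamard structure, admissibility from \autoref{lem:geodesics}, and convexity of $r\mapsto r^{p}$.
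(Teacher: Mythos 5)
Your proposal is correct, and its skeleton is exactly the paper's: existence from \autoref{thm:ExStabConv}, a pointwise connecting geodesic $\gamma_t$ made admissible by \autoref{lem:geodesics}, strict convexity of $t\mapsto\F_{[\d]}^{\alpha,\vd}(\gamma_t)$, and a contradiction at $t=1/2$. Where you differ is in rigor rather than route. The paper obtains the strict inequality \autoref{eq:Fconvex} in a single step, citing \autoref{eq:strictconvex} for the fidelity term and \autoref{eq:convex} for the regularizer; taken literally, this justifies strictness only when $w^*\neq w^\diamond$ on a positive-measure subset of $\Omega\setminus D$, which is your first case. The pure inpainting case --- $w^*=w^\diamond$ a.e.\ on $\Omega\setminus D$ but not on $D$, where the fidelity integrand is constant along the geodesic --- is passed over silently in the paper, and your last two paragraphs supply exactly what is needed to close it: non-constancy of $\phi=\Log(w^*)-\Log(w^\diamond)$, strict convexity of $t\mapsto\norm{a+tb}_F^p$ for $p>1$ whenever $b\neq 0$ (valid, since the Frobenius norm is Euclidean), positivity of the weight $\rho^l(x-y)\,\enorm{x-y}^{-n-ps}$ on pairs with $\enorm{x-y}\leq\eta$, and the chaining-plus-connectedness argument upgrading local a.e.\ constancy of $\phi$ to global a.e.\ constancy. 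So your proof is, if anything, more complete than the one in the paper. One caveat you share with the paper: both arguments implicitly require $\abs{\Omega\setminus D}>0$ (you assert it as given, the paper never mentions it); if $D$ exhausts $\Omega$ up to a null set, the statement is actually false, since every constant function in $W^{s,p}(\Omega;\SPDl_z)$ is then a minimizer.
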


\begin{proof}
	Existence of a minimizer is guaranteed by \autoref{thm:ExStabConv}. 
	
	Now, let us assume that there exist two minimizers $w^* \neq w^\diamond \in W^{s,p}(\Omega;\SPDl_z)$ of the functional $\F_{[\d]}^{\alpha,\vd}$. 
	
	Analogously as in \autoref{lem:geodesics} for a geodesic path 
	$\gamma: \Omega \times [0,1] \rarr \SPD$ connecting $w^*$ and $w^\diamond$ 
	we denote by $\gamma_t=\gamma(\cdot,t)$ for $t \in [0,1]$. Thus, in particular, 
	$w^*(x) = \gamma_0(x)$ and $w^\diamond(x) = \gamma_1(x)$ for $x \in \Omega$. Especially,
	$\gamma_t \in W^{s,p}(\Omega;\SPDl_z)$ (see \autoref{lem:geodesics}). \\
	Because $\vd$ is fixed, $\d$ is strictly convex in one argument by \autoref{eq:strictconvex} and convex in both arguments by \autoref{eq:convex} it follows that
	\begin{align}\label{eq:Fconvex}
	\F_{[\d]}^{\alpha,\vd}(\gamma_t) 
	&= \int_\Omega  \chi_{\Omega \setminus D}(x) \d^p(\gamma_t(x),\vd(x)) \dx + 
	\alpha  
	\int_{\Omega\times \Omega} \frac{\d^p(\gamma_t(x), \gamma_t(y))}{\enorm{x-y}^{n+p s}} \rho^l(x-y) \dxy \nonumber \\
	&< t \F_{[\d]}^{\alpha,\vd}(\gamma_0) + (1-t)\F_{[\d]}^{\alpha,\vd}(\gamma_1).
	\end{align}
	Because $w^*$ and $w^\diamond$ are both minimizers we have that 
	$$\F_{[\d]}^{\alpha,\vd}(w^*) = \F_{[\d]}^{\alpha,\vd}(\gamma_0) = 
	\F_{[\d]}^{\alpha,\vd}(\gamma_1) = \F_{[\d]}^{\alpha,\vd}(w^\diamond).$$
	In particular, for $t=1/2$ we obtain by the above equality and by \autoref{eq:Fconvex} that
	\begin{equation*}
	\F_{[\d]}^{\alpha,\vd}(\gamma_{1/2}) < \frac{1}{2} \F_{[\d]}^{\alpha,\vd}(\gamma_0) + \frac{1}{2}\F_{[\d]}^{\alpha,\vd}(\gamma_1)
	= \F_{[\d]}^{\alpha,\vd}(\gamma_0) = \!\!\! \min_{w \in W^{s,p}(\Omega;\SPDl_z)} \F_{[\d]}^{\alpha,\vd}(w),
	\end{equation*}
	which is a contradiction to the minimizing propery of $w^*(x) = \gamma_0(x)$ and $w^\diamond(x) = \gamma_1(x)$ for $x \in \Omega$. Hence, $\gamma_0 $ and $\gamma_1$ must be equal forcing equality in \autoref{eq:Fconvex} and thus giving that the minimum is unique.
\end{proof}

\subsection*{Existence and uniqueness in the case $sp>n$}
If $sp > n$ then existence \emph{and} uniqueness of the minimizer of the functional 
$\F_{[\d]}^{\alpha,\vd}$ even holds on the larger set $W^{s,p}(\Omega;\SPD)$ rather than on  $W^{s,p}(\Omega;\SPDl_z)$, where $\SPD$ is associated with the log-Euclidean
distance $\d = \d_{\SPD}$ as defined in \autoref{eq:LEmetric}. 
Existence in \autoref{thm:ExStabConv} and uniqueness in  \autoref{thm:uniqueness} (with $K = \SPDl_z$) are based on the theory provided in \cite{CiaMelSch19} (see \autoref{thm:ex}) where it is a necessary assumption that the set $K$ is \emph{closed} which is not the case for the set $\SPD$.

Nevertheless it is possible to get existence and uniqueness on this set because 
\begin{center}
for every minimizing sequence $w_k \in W^{s,p}(\Omega;\SPD)$, $k \in \N$, \\
we automatically get that $w_k \in W^{s,p}(\Omega;\SPDl_z)$, 
\end{center}
so that it takes values on the closed subset $\SPDl_z$.
Then, existence of a unique minimizer on $W^{s,p}(\Omega;\SPD)$ follows by the proofs already given, see \cite[Thm. 3.6]{CiaMelSch19} and \autoref{thm:uniqueness}. \\
We now sketch the proof of the assertion. Throughout this sketch $C$ denotes a finite generic constant which, however, can be different from line to line.

	\textbf{Sketch of assertion:} Denote by $\d = \d_{\SPD}$ the log-Euclidean metric (as defined in \autoref{eq:LEmetric}). Let us take a minimizing sequence $w_k \in W^{s,p}(\Omega;\SPD), k \in \N$, of $\F_{[\d]}^{\alpha, \vd}$ so that we can assume that $\F_{[\d]}^{\alpha,\vd}(w_k) \leq C$ for all $w_k, \ k \geq k_0 \in \N$. 

	Computing the log-Euclidean metric leads to evaluations of the Euclidean metric in the matrix logarithmic domain, cf. \autoref{eq:LEmetric}, meaning that
		\begin{equation*} 
		\d(A,B) = \norm{\Log(A) - \Log(B)}_F = \d_{\R^{m \times m}}(\Log(A),\Log(B)),
		\quad A,B \in \SPD.
		\end{equation*}
		This and the fact that $\vd \in L^p(\Omega;\SPD)$ we get that  
		\begin{equation*}
		C \geq \norm{\Log (w_k)}_{L^p(\Omega;\SYM)}^p + 
		       \alpha \Phi_{[d_{\R^{m \times m}}]}^1(\Log (w_k)).
		\end{equation*} 
		Because of \cite[Lemma 2.7]{MelSch20} we can thus bound the $W^{s,p}$-norm of $\Log (w_k)$
		\begin{equation}\label{eq:fractionaSobolevBound}
		C \geq \norm{\Log ( w_k)}_{W^{s,p}(\Omega;\SYM)}^p.
		\end{equation}
		If $sp >n$ the space $W^{s,p}(\Omega;\R^{m \times m})$ is embedded into Hölder-spaces
		$C^{0,\alpha'}(\Omega;\R^{m \times m})$ with $\alpha' \defeq (sp-n)/p$ guaranteed by \cite[Theorem 8.2]{NezPalVal12}. Because of \autoref{eq:fractionaSobolevBound} this gives us that
		\begin{equation*}
		C \geq \norm{\Log (w_k)}_{C^{0,\alpha'}(\Omega;\SYM)}^p,
		\end{equation*}
		yielding in particular that $\norm{\Log (w_k)}_{\infty} < C \defeq z$. 
		
		By the definition of $\SPDl_z$ in \autoref{eq:SPDb} we thus obtain that
		$w_k \in W^{s,p}(\Omega;\SPDl_z)$ for all $k \geq k_0$. 
		Hence, every minimizing sequence $w_k \in W^{s,p}(\Omega;\SPD), k \in \N$, of $\F_{[\d]}^{\alpha, \vd}$ is automatically a minimizing sequence in $W^{s,p}(\Omega;\SPDl_z)$ .
		
\section{Numerics} \label{sec:Numerics}
In this section we go into more detail on the minimization of the regularization functional 
$\F_{[\d]}^{\alpha,\vd}$ defined in \autoref{eq:reg} with the log-Euclidean metric 
$\d = \d_\SPD$ as defined in \autoref{eq:LEmetric} (see \cite{FilArsPenAya07a}) over 
the set $K = \SPDl_z$, the set of symmetric, positive definite $m\times m$ matrices 
with bounded logarithm, as defined in \autoref{eq:K} for denoising and 
inpainting of DTMRI images.

To optimize $\F_{[\d]}^{\alpha,\vd}$ 
we use a projected gradient descent algorithm. The implementation is done in $\mathtt{Matlab}$.
The gradient step is performed by using $\mathtt{Matlabs}$ built-in function $\mathtt{fminunc}$, where the gradient is approximated with a finite difference scheme (central differences in the interior and one-sided differences at the boundary). Therefore, after each step we project the data which are elements of the larger space $\SYM(3)$ back onto $K = \SPDl_z(3)$ by applying the following projection. We remark that by projection we here refer to an idempotent mapping.

\subsection{Projections}
\label{ssec:projections}

\begin{definition}[Projection operators]
	    \quad \\
		$\bullet$ \emph{Projection of $\SYM$ onto $\SPDs_{[\ve,\infty)}$}: 
		Let $M \in \SYM$ be a symmetric matrix with eigendecomposition $M = V \Lambda V^T$ with $\Lambda = \diag(\lambda_1, \dots, \lambda_m)$. Then the projection of $M$ onto the set $\SPDs_{[\ve,\infty)}$ is given by
		\begin{equation}\label{eq:P1}
		\Proj_1:\SYM \rarr \SPDs_{[\ve,\infty)}, \quad
		M \mapsto V\Sigma V^T,
		\end{equation}
		where $\Sigma = \diag(\mu_1, \dots, \mu_m)$ with
		\begin{equation*}
		\mu_i \defeq \begin{cases}
		\lambda_i &\mbox{ if } \lambda_i \geq \ve,\\
		\ve &\mbox{ if } \lambda_i < \ve.
		\end{cases}
		\end{equation*}
		
		$\bullet$ \emph{Projection of $\SPDs_{[\ve,\infty)}$ onto $\SPDl_z$}: 
		Let $M \in \SPDs_{[\ve,\infty)}$ with eigenvalues $(\lambda_i)_{i=1}^m$ and eigendecomposition $M = V\Lambda V^T$. Define 
		$C_{\mathrm{Frob}} \defeq \norm{\Log(M)}_F^2 = \sum_{i=1}^m \log^2(\lambda_i)$ as the squared Frobenius norm of $\Log(M)$. Then the projection of $M$ onto $\SPDl_z$ is given by
		\begin{equation}\label{eq:P2}
		\Proj_2: \SPDs_{[\ve,\infty)} \rarr \SPDl_z, \quad
		M \mapsto V\Sigma V^T,
		\end{equation}
		where $\Sigma = \diag(\mu_1, \dots, \mu_m)$ with
		\begin{equation*}
		\mu \defeq \begin{cases}
		\lambda &\mbox{ if } C_{\mathrm{Frob}} \leq z^2,\\
		\lambda^{z/\sqrt{C_{\mathrm{Frob}}}} &\mbox{ if } C_{\mathrm{Frob}} > z^2,
		\end{cases}
		\end{equation*}
        where $\lambda = (\lambda_1, \dots, \lambda_m)^T$ and $\mu = (\mu_1, \dots, \mu_m)^T$ are the vectors containing all eigenvalues.
        
        $\bullet$ \emph{Projection of $\SYM$ onto $\SPDl_z$}: Let $M \in \SYM$ be a symmetric matrix. We define its projection $\Proj(M)$ onto $\SPDl_z$ as 
        \begin{equation}\label{eq:P}
        \Proj: \SYM \rarr \SPDl_z, \quad
        M \mapsto \Proj_2 (\Proj_1(M)).
        \end{equation}
\end{definition}

For a given matrix $M \in \SYM$ the projection $\Proj_1(M) \in \SPDs_{[\ve,\infty)}$ is the closest approximation in the Frobenius norm, 
i.e. 
\begin{equation}
	\Proj_1(M) = \argmin_{X \in \SPDs_{[\ve,\infty)}} \|M-X\|_F^2,
\end{equation}
as stated in \autoref{lem:Approx} when choosing $\underline{\ve} = \ve$ and $\bar{\ve} = \infty$.

If $M \in \SPDs_{[\ve,\infty)}$ the projection $\Proj_2$  scales the eigenvalues of $M$ in such a way that it is guaranteed that $\norm{\Log(\Proj_2(M))}_F \leq z$, i.e.
$\Proj_2(M) \in \SPDl_z$.

In fact, if $C_{\mathrm{Frob}} = \norm{\Log(M)}_F^2  > z^2$ meaning that 
$M \not \in \SPDl_z$
then
\begin{equation*}
\norm{\Log(\Proj_2(M))}_F^2 = \sum_{i=1}^m \log^2(\lambda_i^{z/\sqrt{C_{\mathrm{Frob}}}})  = \frac{z^2}{C_{\mathrm{Frob}}}\sum_{i=1}^m \log^2(\lambda_i) 
= \frac{z^2}{C_{\mathrm{Frob}}}\underbrace{\norm{\Log(M)}_F^2}_{C_{\mathrm{Frob}}}
= z^2,
\end{equation*}
giving that $\Proj_2(M) \in \SPDl_z$. 

The following lemma shows that the projected functions stay in the same regularity class.
\begin{lemma}\label{lem:inclusion}
	\begin{enumerate}
		\item \label{itm:inclP1} Let $w \in W^{s,p}(\Omega;\SYM)$. Then $\Proj_1 (w) \in W^{s,p}(\Omega;\SPDs_{[\ve,\infty)})$.
		\item \label{itm:inclP2} Let $w \in W^{s,p}(\Omega;\SPDs_{[\ve,\infty)})$. Then $\Proj_2 (w) \in W^{s,p}(\Omega;\SPDl_z)$.
		\item \label{itm:inclP} Let $w \in W^{s,p}(\Omega;\SYM)$. Then $\Proj  (w) \in W^{s,p}(\Omega;\SPDl_z)$.
	\end{enumerate}	
\end{lemma}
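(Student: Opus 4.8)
The plan is to deduce all three inclusions from a single, purely pointwise principle: if $T\colon K_1\to K_2$ is a Lipschitz map between subsets $K_1,K_2\subseteq\R^{m\times m}$ (with respect to the Frobenius norm, with constant $L$) and $w\in W^{s,p}(\Omega;K_1)$, then $T\circ w\in W^{s,p}(\Omega;K_2)$. Indeed, the pointwise estimate $\norm{T(w(x))-T(w(y))}_F\le L\,\norm{w(x)-w(y)}_F$ inserted into \autoref{eq:s_seminorm} gives $\abs{T\circ w}_{W^{s,p}(\Omega;\R^{m \times m})}\le L\,\abs{w}_{W^{s,p}(\Omega;\R^{m \times m})}<\infty$, while fixing any $c\in K_1$ and using $\norm{T(w(x))}_F\le L\,\norm{w(x)-c}_F+\norm{T(c)}_F$ together with the boundedness of $\Omega$ yields $T\circ w\in L^p(\Omega;\R^{m \times m})$. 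Hence it suffices to show that $\Proj_1$, $\Proj_2$ and $\Proj$ are Lipschitz and land in the stated target sets; the membership in the target sets is exactly what was already verified after \autoref{eq:P1}--\autoref{eq:P}.

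For part \autoref{itm:inclP1} I would note that $\SPDs_{[\ve,\infty)}$ is a closed, convex subset of the Hilbert space $(\SYM,\inner{\cdot}{\cdot}_F)$: convexity holds because the smallest-eigenvalue functional $M\mapsto\min_{\norm{x}=1}x^TMx$ is concave, so a convex combination still has all eigenvalues $\ge\ve$. By \autoref{lem:Approx} with $\underline{\ve}=\ve$ and $\bar{\ve}=\infty$, $\Proj_1(M)$ is the nearest point of $\SPDs_{[\ve,\infty)}$ to $M$ in the Frobenius norm, i.e. $\Proj_1$ is the metric projection onto this convex set. Metric projections onto closed convex subsets of a Hilbert space are nonexpansive, so $\Proj_1$ is $1$-Lipschitz, and the general principle gives $\Proj_1(w)\in W^{s,p}(\Omega;\SPDs_{[\ve,\infty)})$.

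For part \autoref{itm:inclP2} the key observation is that $\Proj_2$ factors through the matrix logarithm as $\Proj_2=\Exp\circ R\circ\Log$, where $R$ is the radial (metric) projection of $\SYM$ onto the Frobenius ball $B_z\defeq\set{S\in\SYM:\norm{S}_F\le z}$; this is precisely the content of the eigenvalue rescaling in \autoref{eq:P2}, since a short computation shows $\Log(\Proj_2(M))=R(\Log(M))$ in both branches of the definition. The map $R$ is nonexpansive as a metric projection onto a convex set. It then remains to bound $\Log$ and $\Exp$ in the Frobenius norm: on $\SPDs_{[\ve,\infty)}$ the scalar logarithm has derivative at most $1/\ve$, and on $B_z$ (whose elements have eigenvalues in $[-z,z]$) the scalar exponential has derivative at most $\e^z$. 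Transferring these scalar bounds to the matrix functions — exactly as in the mean-value-theorem computation in the proof of \autoref{lem: Equivalence}, i.e. expanding the Frobenius norm in the joint eigenbasis and bounding each divided difference of the form $\frac{\log\lambda_j-\log\mu_i}{\lambda_j-\mu_i}$ — shows that $\Log$ is $\frac{1}{\ve}$-Lipschitz on $\SPDs_{[\ve,\infty)}$ and $\Exp$ is $\e^z$-Lipschitz on $B_z$. Consequently $\Proj_2$ is $(\e^z/\ve)$-Lipschitz, and the principle yields $\Proj_2(w)\in W^{s,p}(\Omega;\SPDl_z)$. Part \autoref{itm:inclP} is then immediate, since $\Proj=\Proj_2\circ\Proj_1$ and compositions of Lipschitz maps are Lipschitz: apply \autoref{itm:inclP1} followed by \autoref{itm:inclP2}.

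I expect the only genuine obstacle to be the transfer of the scalar Lipschitz constants of $\log$ and $\exp$ to the corresponding matrix functions in the Frobenius norm. This is not automatic from $C^1$-smoothness alone, because $\SPDs_{[\ve,\infty)}$ is unbounded, so one really needs the divided-difference argument, which simultaneously exploits the unitary invariance of the Frobenius norm (\autoref{eq:OrthogonalInv}) and the uniform bound on the scalar derivatives over the relevant spectral intervals. This step mirrors, and can be borrowed almost verbatim from, the estimate already carried out in \autoref{lem: Equivalence}.
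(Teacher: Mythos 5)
Your proof is correct, but it takes a genuinely different route from the paper's. The paper proves item \textit{(i)} by a concrete computation: since $\Proj_1$ only truncates eigenvalues, $w(x)$ and $\Proj_1(w)(x)$ share the same eigenvectors, so both the $L^p$ bound (via a splitting of the eigenvalue sum and Jensen's inequality) and the semi-norm bound reduce to expanding the Frobenius norm in the joint eigenbasis $C(x,y)=R^T(y)R(x)$ and using the term-by-term inequality $\abs{\mu_j(x)-\mu_i(y)}\le\abs{\lambda_j(x)-\lambda_i(y)}$ --- that is, the paper proves the nonexpansiveness of $\Proj_1$ by hand along each pair $(w(x),w(y))$, which is exactly the fact you import abstractly from Hilbert-space theory (metric projections onto closed convex sets are $1$-Lipschitz, with convexity of $\SPDs_{[\ve,\infty)}$ and the identification of $\Proj_1$ as the nearest-point map coming from \autoref{lem:Approx}). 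For item \textit{(ii)} the paper only remarks that the proof is ``similar'' to \textit{(i)}, and this is where your approach genuinely pays off: the eigenvalue rescaling in $\Proj_2$ is coupled through $C_{\mathrm{Frob}}=\norm{\Log(M)}_F^2$, which depends on the whole spectrum, so the per-eigenvalue comparison driving the paper's proof of \textit{(i)} does not transfer verbatim; your factorization $\Proj_2=\Exp\circ R\circ\Log$, with $R$ the nonexpansive radial projection onto the ball $\set{S\in\SYM:\norm{S}_F\le z}$ and with the constants $1/\ve$ for $\Log$ and $\e^z$ for $\Exp$ obtained by the same divided-difference computation as in \autoref{lem: Equivalence}, turns \textit{(ii)} into a transparent composition of three Lipschitz maps. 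In exchange, the paper's argument is elementary and self-contained (no appeal to projection theory or to Lipschitz functional calculus), while yours is modular, supplies explicit Lipschitz constants for $\Proj$, and actually fills in the step the paper leaves implicit.
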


\begin{proof}
	\begin{enumerate}
		\item 
		Let $w \in W^{s,p}(\Omega;\SYM(m))$ and define $v \defeq \Proj_1 (w)$. By the definition of $\Proj_1$
		it follows directly that $v: \Omega \rarr \SPDs_{[\ve,\infty)}(m)$. 
		
		By \autoref{rem:SPDProp} \autoref{itm:Decomp} and the definition of $\Proj_1$ (see \autoref{eq:P1} and also \autoref{lem:Approx}) we can decompose $w(x)$ and $v(x)$ for $ x \in \Omega$ as follows:
		\begin{equation*}
		w(x) = R(x)W(x)R^T(x), \ v(x) = R(x)V(x)R^T(x),
		\end{equation*}
		with orthonormal matrix $R \in \R^{m \times m}$ and diagonal matrices $W,V \in \R^{m \times m}$. Denote the eigenvalues of $w(x)$ as $(\lambda_i(x))_{i=1}^m$. 
		The eigenvalues of $v(x)$ are then defined as
		\begin{equation}\label{eq:eigenvaluesProj}
		\mu_i(x) \defeq \begin{cases}
		\lambda_i(x) &\mbox{ if } \lambda_i(x) \in [\ve,\infty), \\
		\ve &\mbox{ if } \lambda_i(x) < \ve,
		\end{cases} \quad i=1, \dots m.
		\end{equation}
		
		It remains to show that $v \in W^{s,p}(\Omega;\SPDs_{[\ve,\infty)}(m))$, i.e.
		\begin{equation}\label{eq:vWSP}
		\norm{v}^p_{W^{s,p}(\Omega;\R^{m \times m})} = 
		\int_{\Omega} \norm{v(x)}_F^p \dx +
		\int_{\Omega\times \Omega} \frac{\norm{v(x)-v(y)}_F^p}{\enorm{x-y}^{n+ps}} \dxy < \infty.
		\end{equation}

		We start to bound the $L^p$-norm in \autoref{eq:vWSP}. \\
		For each $x \in \Omega$ it holds that 
		\begin{equation}\label{eq:Lp1}
		\norm{v(x)}_F^p = \norm{R(x)V(x)R^T(x)}_F^p = \norm{V(x)}_F^p
		= \left( \sum_{i=1}^{m} \enorm{\mu_i(x)}^2 \right)^{p/2},
		\end{equation}
		using the unitary invariance of the Frobenius norm, see \autoref{eq:OrthogonalInv}. \\ 
		From the definition of the $m$ eigenvalues in \autoref{eq:eigenvaluesProj} and Jensen's inequality it follws that
		\begin{equation}\label{eq:splitting}
		\begin{aligned}
		\left( \sum_{i=1}^{m} \enorm{\mu_i(x)}^2 \right)^{p/2}
		&= \left( \sum_{\{i:\mu_i = \ve\}} \enorm{\mu_i(x)}^2  
		+  \sum_{\{\i:\mu_i> \ve\}} \enorm{\mu_i(x)}^2 \right)^{p/2} \\
		&\leq 2^{p-1} \left(m\ve^2\right)^{p/2} + 2^{p-1} \left( \sum_{i=1}^{m} \enorm{\lambda_i(x)}^2 \right)^{p/2}. 
		\end{aligned}
		\end{equation}
		We thus obtain using \autoref{eq:Lp1} and \autoref{eq:splitting} that
		\begin{align}\label{eq:Lpbound}
		\norm{v}_{L^p(\Omega;\R^{m \times m})}^p 
		& = \int \limits_{\Omega} \left( \sum_{i=1}^{m} \enorm{\mu_i(x)}^2 \right)^{p/2} \dx
		\leq 2^{p-1} \int \limits_{\Omega} \left(m\ve^2\right)^{p/2} + \left( \sum_{i=1}^{m} \enorm{\lambda_i(x)}^2 \right)^{p/2} \dx\\
		&\leq 2^{p-1}\left(m\ve^2\right)^{p/2}|\Omega| + 2^{p-1}\norm{w}_{L^p(\Omega;\R^{m \times m})}^p < \infty,
		\end{align}
		because $\Omega$ is bounded and $w \in W^{s,p}(\Omega;\SYM(m))$, in particular $w \in L^p(\Omega;\SYM(m))$.
		
		The $W^{s,p}$-semi-norm in \autoref{eq:vWSP} can be bounded in a similar way.\\
		Therefore we calculate for $x,y \in \Omega$
		\begin{align}\label{eq:FrobNormEstimate1}
		\norm{v(x)-v(y)}_F^p &= \norm{R(x)V(x)R^T(x)-R(y)V(y)R^T(y)}_F^p
		=  \norm{R^T(y)R(x)V(x)-V(y)R^T(y)R(x)}_F^p \nonumber \\
		&\eqdef \norm{C(x,y)V(x)-V(y)C(x,y)}_F^p = 
		\left( \sum_{i,j=1}^{m} c_{ij}(x,y)\big(\mu_j(x) - \mu_i(y)\big)^2 \right)^{p/2},
		\end{align}
		where the last equality holds true because the matrix $C$ is symmetric.
		The same calculation is valid for $w$ so that we obtain
		\begin{equation}\label{eq:FrobNormEstimate2}
		\norm{w(x)-w(y)}_F^p =
		\left( \sum_{i,j=1}^{m} c_{ij}(x,y)\big(\lambda_j(x) - \lambda_i(y)\big)^2 \right)^{p/2}.
		\end{equation}
		By the definition of the eigenvalues $\mu$, see \autoref{eq:eigenvaluesProj}, and by using a splitting of the sum as in \autoref{eq:splitting} it can be shown that
		\begin{equation}\label{eq:BoundforWsp}
		\left( \sum_{i,j=1}^{m} c_{ij}(x,y) \big(\mu_j(x) - \mu_i(y)\big)^2 \right)^{p/2} \leq
		\left( \sum_{i,j=1}^{m} c_{ij}(x,y) \big(\lambda_j(x) - \lambda_i(y)\big)^2 \right)^{p/2}.
		\end{equation}
		This implies that (using  \autoref{eq:FrobNormEstimate1}, \autoref{eq:FrobNormEstimate2}, \autoref{eq:BoundforWsp} )
		\begin{align*}
		\enorm{v}_{W^{s,p}(\Omega;\R^{m \times m})} 
		&= \int_{\Omega\times \Omega} \frac{\norm{v(x)-v(y)}_F^p}{\enorm{x-y}^{n+ps}} \dxy 
		= \int_{\Omega\times \Omega} \frac{\left( \sum_{i,j=1}^{m} c_{ij}(x,y) \big(\mu_j(x) - \mu_i(y)\big)^2 \right)^{p/2}}{\enorm{x-y}^{n+ps}} \dxy \\
		& \leq \int_{\Omega\times \Omega} \frac{\left( \sum_{i,j=1}^{m} c_{ij}(x,y) \big(\lambda_j(x) - \lambda_i(y)\big)^2 \right)^{p/2}}{\enorm{x-y}^{n+ps}} \dxy 
		= \enorm{w}_{W^{s,p}(\Omega;\R^{m \times m})} < \infty
		\end{align*}
		because of the fact that $w \in W^{s,p}(\Omega;\SYM(m))$.
		\item The proof can be done similar to the previous one in \autoref{itm:inclP1}.
		\item This is a direct consequence of \autoref{itm:inclP1} and \autoref{itm:inclP2}.
	\end{enumerate}
\end{proof}

The next lemma shows that minimizing elements of $\F_{[\d]}^{\alpha,\vd}$ on $W^{s,p}(\Omega;\SPDl_z)$ and minimizing elements of the projected gradient method are connected.
Therefore we define an extension of $\F_{[\d]}^{\alpha,\vd}$ to the larger space $W^{s,p}(\Omega;\SYM)$ as
$\Ft_{[\d]}^{\alpha,\vd}: W^{s,p}(\Omega;\SYM) \rarr [0,\infty)$ given by
\begin{equation*} 
		\begin{aligned} 
			\Ft_{[\d]}^{\alpha,\vd}(u) \defeq
			& \int_\Omega \chi_{\Omega \setminus D}(x) \d^p(\Proj(u(x)),\vd(x)) \dx + 
			\alpha \tilde{\Reg}_{[\d]}^{l} (u), \\
			\text{ with } \tilde{\Reg}^{l}_{[\d]}(u) \defeq & 
			\int_{\Omega\times \Omega} \frac{\d^p(\Proj(u(x)), \Proj(u(y)))}{\enorm{x-y}^{n+p s}} \rho^l(x-y) \dxy, 
	\end{aligned}
\end{equation*} 
with the projection operator $P: W^{s,p}(\Omega;\SYM) \rarr  W^{s,p}(\Omega;K) =  W^{s,p}(\Omega;\SPDl_z)$ defined in \autoref{eq:P}. 

\begin{lemma}
Let $K = \SPDl_z$ and let $\d = \d_{\SPD}$ be the log-Euclidean metric as defined in \autoref{eq:LEmetric}.
\begin{enumerate}
	\item \label{itm:minFt} Let $w^* \in \mathrm{argmin}_{w \in W^{s,p}(\Omega;\SPDl_z)} \F_{[\d]}^{\alpha,\vd}(w)$. Then in particular $w^* \in W^{s,p}(\Omega;\SYM)$ and it is a minimizer of $\Ft_{[\d]}^{\alpha,\vd}$, i.e. $w^* \in \mathrm{argmin}_{u \in W^{s,p}(\Omega;\SYM)} \Ft_{[\d]}^{\alpha,\vd}(u)$. 
	\item \label{itm:minF} 
		Let $u^* \in \mathrm{argmin}_{u \in W^{s,p}(\Omega;\SYM)} \Ft_{[\d]}^{\alpha,\vd}(u)$. Then $w^* \defeq \Proj(u^*) \in W^{s,p}(\Omega;\SPDl_z)$ is a minimizer of $\F_{[\d]}^{\alpha,\vd}$, i.e. $w^* \in \mathrm{argmin}_{w \in W^{s,p}(\Omega;\SPDl_z)} \F_{[\d]}^{\alpha,\vd}(w)$. 
\end{enumerate}
\end{lemma}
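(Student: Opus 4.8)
The plan is to reduce both assertions to two elementary facts about the projection $\Proj$ combined with the regularity preservation established in \autoref{lem:inclusion}. The first fact is that $\Proj$ is idempotent and acts as the identity on $\SPDl_z$, so that $\Proj(w(x)) = w(x)$ for a.e.\ $x$ whenever $w \in W^{s,p}(\Omega;\SPDl_z)$. The second is the pointwise identity obtained directly from the definition of $\Ft_{[\d]}^{\alpha,\vd}$,
\[
\Ft_{[\d]}^{\alpha,\vd}(u) = \F_{[\d]}^{\alpha,\vd}(\Proj(u)) \qquad \text{for all } u \in W^{s,p}(\Omega;\SYM),
\]
whose right-hand side is well-defined because \autoref{lem:inclusion} \autoref{itm:inclP} guarantees $\Proj(u) \in W^{s,p}(\Omega;\SPDl_z)$. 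This identity is immediate: $\Ft_{[\d]}^{\alpha,\vd}$ evaluates $\d^p$ at $\Proj(u(x))$ and $\Proj(u(y))$, which are exactly the values of the function $\Proj(u)$ appearing in $\F_{[\d]}^{\alpha,\vd}(\Proj(u))$.

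For \autoref{itm:minFt} I would argue as follows. Since $\SPDl_z \subset \SYM$, the minimizer $w^*$ lies in $W^{s,p}(\Omega;\SYM)$, and because $\Proj$ is the identity on $\SPDl_z$ we have $\Ft_{[\d]}^{\alpha,\vd}(w^*) = \F_{[\d]}^{\alpha,\vd}(w^*)$. For an arbitrary competitor $u \in W^{s,p}(\Omega;\SYM)$ the displayed identity gives $\Ft_{[\d]}^{\alpha,\vd}(u) = \F_{[\d]}^{\alpha,\vd}(\Proj(u))$, and since $\Proj(u) \in W^{s,p}(\Omega;\SPDl_z)$ the minimizing property of $w^*$ yields $\F_{[\d]}^{\alpha,\vd}(\Proj(u)) \geq \F_{[\d]}^{\alpha,\vd}(w^*) = \Ft_{[\d]}^{\alpha,\vd}(w^*)$. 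Hence $w^*$ minimizes $\Ft_{[\d]}^{\alpha,\vd}$ over $W^{s,p}(\Omega;\SYM)$.

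For \autoref{itm:minF} I would set $w^* \defeq \Proj(u^*)$, which lies in $W^{s,p}(\Omega;\SPDl_z)$ by \autoref{lem:inclusion} \autoref{itm:inclP}. The displayed identity directly gives $\F_{[\d]}^{\alpha,\vd}(w^*) = \Ft_{[\d]}^{\alpha,\vd}(u^*)$. For any $w \in W^{s,p}(\Omega;\SPDl_z) \subset W^{s,p}(\Omega;\SYM)$ the identity-on-$\SPDl_z$ property yields $\Ft_{[\d]}^{\alpha,\vd}(w) = \F_{[\d]}^{\alpha,\vd}(w)$, so the minimizing property of $u^*$ gives $\F_{[\d]}^{\alpha,\vd}(w) = \Ft_{[\d]}^{\alpha,\vd}(w) \geq \Ft_{[\d]}^{\alpha,\vd}(u^*) = \F_{[\d]}^{\alpha,\vd}(w^*)$. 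Thus $w^*$ minimizes $\F_{[\d]}^{\alpha,\vd}$ over $W^{s,p}(\Omega;\SPDl_z)$, as claimed.

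The only genuinely delicate point — and the main thing to pin down carefully — is verifying that $\Proj$ restricts to the identity on $\SPDl_z$, since all else is bookkeeping requiring neither compactness nor convexity. Concretely, this needs $\ve$ chosen with $\ve \le \e^{-z}$, so that by \autoref{cor:eigenvalueBounds} every eigenvalue of a matrix in $\SPDl_z$ lies in $[\e^{-z},\e^z] \subset [\ve,\infty)$ and is therefore left untouched by $\Proj_1$ (see \autoref{eq:P1}), while $\Proj_2$ fixes any $M$ with $\norm{\Log(M)}_F \le z$ by its definition in \autoref{eq:P2}. Together these give $\Proj(M) = M$ for $M \in \SPDl_z$, which is exactly the identity-on-$\SPDl_z$ property used above; the idempotency $\Proj \circ \Proj = \Proj$ then also follows since $\Proj$ maps $\SYM$ into $\SPDl_z$.
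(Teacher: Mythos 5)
Your proof is correct and is essentially the paper's own argument: the paper omits it entirely (stating only ``The proof is straightforward''), and its follow-up remark that $\Ft_{[\d]}^{\alpha,\vd} = \F_{[\d]}^{\alpha,\vd} \circ \Proj$ confirms that your displayed identity, combined with $\Proj$ acting as the identity on $W^{s,p}(\Omega;\SPDl_z)$ and \autoref{lem:inclusion} \autoref{itm:inclP}, is exactly the intended reasoning. Your observation that the identity-on-$\SPDl_z$ property genuinely requires $\ve \leq \e^{-z}$ is a point the paper leaves implicit but satisfies in its numerical setting ($\ve = \mathtt{eps} < \e^{-36}$, cf.\ \autoref{ssec:optimization}).
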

The proof is straightforward. \\
\begin{remark}
Basically, the second item of the previous lemma shows that
\begin{equation*}
\mathrm{argmin}_{u \in W^{s,p}(\Omega;\SYM)} \F_{[\d]}^{\alpha,\vd} \circ \Proj
= \Proj^{-1} \big( \mathrm{argmin}_{w \in W^{s,p}(\Omega;\SPDl_z)} \F_{[\d]}^{\alpha,\vd} \big).
\end{equation*}
\end{remark}
\color{black}

\section{Numerical experiments}
\label{sec:experiments}
After clarifying existence, uniqueness, stability and convergence of variational regularization methods in an infinite dimensional function set setting, we move to the discretized optimization problems, which are finite dimensional optimization problems on manifolds.
In order to present and evaluate our numerical experiments, we need a method of comparison, 
which is outlined in \autoref{ss:cf} and a quality criterion, which is described in 
\autoref{ss:qc}. We present experiments with synthetic and real data in \autoref{ssec:results}. The generation of synthetic data is described in \autoref{ssec:noisyData}. 

When minimizing $\F_{[\d]}^{\alpha,\vd}$ we follow the concept of discretize-then-optimize.
So, in the text below, when we talk about numerical implementation the functional should always be considered as a discretized functional on a finite dimensional subset of $W^{s,p}(\Omega;\SPDl_z)$. Nevertheless, we write the functional as it is defined in the infinite dimensional setting.
However, we recall again the fundamental difference between the infinite dimensional setting and the discretized one: After discretization the functional deals with mappings from 
a vector (with dimension of the numbers of pixel) into a product vector of manifold-valued components, that is an optimization problem on manifolds. Such a formulation is not possible for the infinite dimensional one.

The numerical results build up on the following parameter setting:
\begin{enumerate}
	\item In the concrete examples in \autoref{ssec:results} we take $m=3$ and $n=2$. This means that we manipulate (denoise and inpaint) a 2-dimensional slice of a 
	3-dimensional DTMRI image. 
	\item In the regularization term $\Reg^l_{[\d]}$, defined in \autoref{eq:phi} 
	we choose $l=1$ in order to take advantage of the locally supported mollifier, see \autoref{def:mol}.
\end{enumerate}

\subsection{Optimization} 
\label{ssec:optimization}
As described in the previous \autoref{ssec:projections} when optimizing the functional
$\F_{[\d]}^{\alpha,\vd}$ (defined in \autoref{eq:reg} and $\d =\d_{\SPD}$ defined in \autoref{eq:LEmetric} ) we use a projected gradient descent algorithm by applying the projection 
$\Proj = \Proj_2 \circ \Proj_1$ to each diffusion tensor after each step (as defined in \autoref{eq:P}). \\
$\Proj_1$ first projects onto the set $\SPDs_{[\ve,\infty)}(3)$.
In the implementation we used $\ve = \mathtt{eps}$, where $\mathtt{eps}$ is the floating-point relative accuracy in $\mathtt{Matlab}$. Then $\Proj_2$ projects onto $\SPDl_z(3)$, where we used $z=36$. This is due to the fact that if $A \in \SPDl_{36}(3)$ then its eigenvalues lie in the interval $[\e^{-36},\e^{36}] \approx [\mathtt{eps},\e^{36}]$, see \autoref{cor:eigenvalueBounds}, so that we are able to compute diffusion tensors close to zero without projecting them. A summary of parameters used is shown in \autoref{fig:table2}.

The (discrete) mollifier $\rho$ in \autoref{eq:reg} (we choose $l=1$) is defined such a way that it has non-zero support on \emph{up to nine} neighboring pixels in \emph{each} direction. 
The number of non-zero elements is denoted by $n_\rho$ and we refer to \autoref{fig:rho} for an illustration. 
The function $\rho: \R^n \rarr \R$ satisfies two needs: One the one hand it allows to combine two different concepts. The characterization theory of \cite{BouBreMir01} and a classical theory of Sobolev spaces \cite{Maz85}. On the other hand, there is a practical aspect, which is related to computation time: The smaller the essential support of $\rho$ is, the faster the optimization algorithm can be implemented. In other words, a large support of would be desired for a quasi Sobolev norm regularization implementation but this hinders a very efficient implementation. 

\begin{figure}[h]
	\begin{center}
		\begin{tikzpicture}
			\draw(-3.5,-3.5) grid (3.5,3.5);
			\draw[thick] (0,0) circle (0.2);
			\fill[black] (1,1) + (0, 0.2) arc (90:270:0.2);
			\fill[gray] (1,1) + (0, -0.2) arc (270:450:0.2);
			
			\fill[black] (1,0) + (0, 0.2) arc (90:270:0.2);
			\fill[gray] (1,0) + (0, -0.2) arc (270:450:0.2);
			
			\fill[black] (1,-1) + (0, 0.2) arc (90:270:0.2);
			\fill[gray] (1,-1) + (0, -0.2) arc (270:450:0.2);
			
			\fill[black] (0,-1) + (0, 0.2) arc (90:270:0.2);
			\fill[gray] (0,-1) + (0, -0.2) arc (270:450:0.2);
			
			\fill[black] (-1,-1) + (0, 0.2) arc (90:270:0.2);
			\fill[gray] (-1,-1) + (0, -0.2) arc (270:450:0.2);
			
			\fill[black] (-1,0) + (0, 0.2) arc (90:270:0.2);
			\fill[gray] (-1,0) + (0, -0.2) arc (270:450:0.2);
			
			\fill[black] (-1,1) + (0, 0.2) arc (90:270:0.2);
			\fill[gray] (-1,1) + (0, -0.2) arc (270:450:0.2);
			
			\fill[black] (0,1) + (0, 0.2) arc (90:270:0.2);
			\fill[gray] (0,1) + (0, -0.2) arc (270:450:0.2);
			
			\filldraw[black](0,2)circle[radius=0.2];
			\filldraw[black](1,2)circle[radius=0.2];
			\filldraw[black](2,2)circle[radius=0.2];
			\filldraw[black](2,1)circle[radius=0.2];
			\filldraw[black](2,0)circle[radius=0.2];
			\filldraw[black](2,-1)circle[radius=0.2];
			\filldraw[black](2,-2)circle[radius=0.2];
			\filldraw[black](1,-2)circle[radius=0.2];
			\filldraw[black](0,-2)circle[radius=0.2];
			\filldraw[black](-1,-2)circle[radius=0.2];
			\filldraw[black](2,-1)circle[radius=0.2];
			\filldraw[black](-2,-2)circle[radius=0.2];
			\filldraw[black](-2,-1)circle[radius=0.2];
			\filldraw[black](-2,0)circle[radius=0.2];
			\filldraw[black](-2,1)circle[radius=0.2];
			\filldraw[black](-2,2)circle[radius=0.2];
			\filldraw[black](-1,2)circle[radius=0.2];
		\end{tikzpicture}
		\caption{Support of the discrete mollifier $\rho$ with $n_{\rho} = 1$ (gray) and $n_{\rho} = 2$ (black) when centered at the unfilled point in the middle. In the examples we have chosen $n_{\rho} = 9$ at most.}
		\label{fig:rho}
	\end{center}
\end{figure}
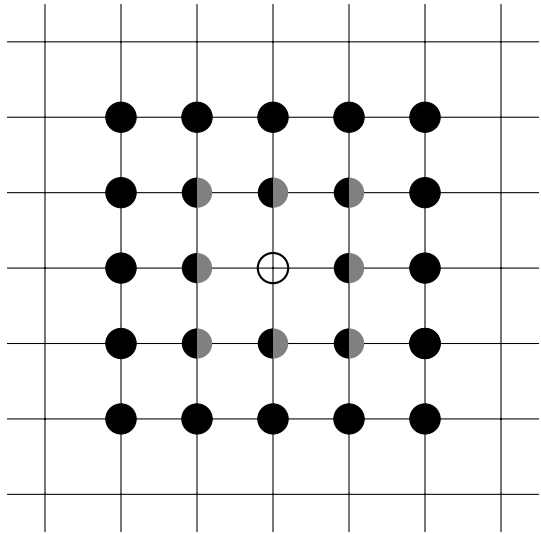

\subsection{Comparison functional} \label{ss:cf}
We compare the results with the ones obtained by optimizing the \emph{comparison functional} $\F_{\text{C}}$ defiend as
\begin{equation}\label{eq:FS}
\boxed{
\F_{\text{C}}(w) \defeq \int_\Omega \chi_{\Omega \setminus D}(x) \norm{w(x)- \vd(x)}_F^p \dx + \beta \int \limits_{\Omega} \norm{\nabla w(x)}_F^p \dx
}
\end{equation}
on $W^{1,p}(\Omega;\SPDl_z) \subset W^{1,p}(\Omega;\R^{m \times m})$ (see \cite[Cor 5.5]{NezPalVal12}). Here, the fidelity term consists of the $L^p$-norm while the regularizer is the vectorial Sobolev semi-norm to the power $p$.
In the implementation we project the data back onto $K = \SPDl_{36}(3)$ after each gradient step as described before.

\subsection{Measure of quality} \label{ss:qc}
As a measure of quality we compute the \emph{signal-to-noise ratio (SNR)} which is defined as 
\begin{equation*}
SNR = \frac{ \norm{w^{\mathrm{orig}}}_F}{\norm{w^{\mathrm{orig}} - w^{\mathrm{rec}}q}_F},
\end{equation*}
where $w^{\mathrm{orig}}$ describes the ground truth and $w^{\mathrm{rec}}$ the reconstructed data. 

\subsection{Noisy data generation} 
\label{ssec:noisyData}
We consider a discretized version of $\Omega \subset \R^2$ as a quadratic grid of size $N \times N, N \in \N$ with equally distributed pixels $(p^{i,j})_{i,j=1}^N$. On each $p^{i,j}$ a symmetric, positive definite diffusion tensor $w^{i,j} \in \R^{3 \times 3}$ (with bounded logarithm) is located describing the underlying diffusion process in the biological tissue. \\
In DTMRI the data that are actually measured are so-called \emph{diffusion weighted images} (DWIs) $(A_{[b,g]}(p^{i,j}))_{i,j=1}^N$. They describe the diffusion in a direction $g \in \R^3$ with given b-value $b \in \R$ at a pixel $p^{i,j}$. The diffusion tensor and the DWIs are related by the \emph{Stejskal-Tanner equation} \cite{Ste65,SteTan65,BasLebMat94a}: 
\begin{equation}\label{eq:ST}
A_{[b,g]}(p^{i,j}) = A_0 \e^{-bg^T w^{i,j} g} 
\end{equation}
for all pixels $p^{i,j}$, where we assume that $A_0 \in \R_{\geq 0}$ is known. For more details and a survey on MRI see for example \cite{Jon11}.

To generate our noisy synthetic data $(\vd)^{i,j}$ we computed 12 DWIs $(A^{1}_{[b,g]}(p^{i,j}), \dots, A^{12}_{[b,g]}(p^{i,j}))$ from our initial (original) synthetic diffusion tensor (a symmetric, positive definite matrix with bounded logarithm) $w^{i,j}$ on each pixel $p^{i,j}$ via \autoref{eq:ST}. Then we imposed Rician noise on them (\cite{GudPat95,BasFleWhi06}) with different values of $\sigma^2$. 
We used a least squares fitting (as described shortly in \cite{TscDer04}) followed by the projection $\Proj$ to obtain a noisy diffusion tensor image on each pixel such that $(\vd)^{i,j} \in \SPDl_z(3)$ for $i,j \in \{1, \dots, N\}$.

In the synthetic examples in \autoref{subsubsec:syndenoising} and \autoref{subsubsec:syninpainting} we chose $A_0 = 1000$ and $b = 800$ to generate the noisy data. 
The real data set in \autoref{subsubsec:realdenoising} is freely accessible (\cite{CabAndBasMai}) and provides corresponding values of $A_0$ and $b$. For an overview of parameters see \autoref{fig:table2}.

\subsection{Visualization} 
\label{ssec:visualization}

On each pixel $(p^{i,j})_{i,j=1}^N$ the diffusion process is described by a a symmetric, positive definite diffusion tensor $w^{i,j} \in \R^{3 \times 3}$ (with bounded logarithm). We visualize it by a 3D ellipsoid. Therefore we take the (normed) eigenvectors $v_1^{i,j},v_2^{i,j},v_3^{i,j}$ and the corresponding eigenvalues $\lambda_1^{i,j},\lambda_2^{i,j},\lambda_3^{i,j}$ and interpret the eigenvectors as axis of an ellipsoid with length $\lambda_1, \lambda_2$ and $\lambda_3$, respectively.\\
We color the ellipsoids corresponding to the value of its \emph{fractional anisotropy FA} defined as
\begin{equation}\label{eq:FA}
	FA^{i,j} \defeq \sqrt{\frac{(\lambda_1^{i,j}-\lambda_2^{i,j})^2 + (\lambda_2^{i,j}-\lambda_3^{i,j})^2 + (\lambda_1^{i,j}-\lambda_3^{i,j})^2 }{2(\lambda_1^{i,j} \lambda_1^{i,j} + \lambda_2^{i,j} \lambda_2^{i,j} + \lambda_3^{i,j} \lambda_3^{i,j})}}, \quad i,j \in \{1, \dots,N\}.
\end{equation}
Fractional anisotropy is an index between $0$ and $1$ for measuring the amount of anisotropy within a pixel.
If there is no anisotropy, i.e. if the ellipsoid is sphere-shaped, then all eigenvalues are equal and the fractional anisotropy is zero, which we color black. The higher the value of $FA$ within a pixel the lighter blue we color the ellipsoid. A colorscale is illustrated in \autoref{fig:colorscale}. 

\begin{figure}[!h]
	\centering
	\includegraphics[width=0.4\linewidth]{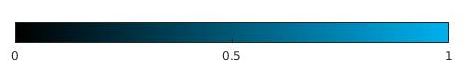}
	\caption{Colorscale used in the numerical results. The value between $0$ and $1$ represent the fractional anisotropy of each ellipsoid. Here, the value zero describes a sphere.}
	\label{fig:colorscale}
\end{figure}

\subsection{Numerical results} 
\label{ssec:results}
Now we present concrete numerical examples for denoising and inpainting of diffusion tensor images. 
The diffusion tensors are represented via ellipsoids as described in \autoref{ssec:visualization}.
The parameters used are summarized in the following table.
\begin{figure}[h!]
	\begin{center}
		\begin{tabular}{ |c|c| } 
			\hline
			parameter&  value\\ 
			\hline
			$\ve$ & $\mathtt{eps}$  \\ 
			\hline
			$z$ & 36 \\ 
			\hline
			$A_0$ & 1000 \\ 
			\hline
			$b$ & 800 \\ 
			\hline
			$l$ & 1 \\ 
			\hline
			$n$ & 2 \\ 
			\hline
			$m$ & 3 \\ 
			\hline
		\end{tabular}
	\end{center}
	\caption{Parameters and corresponding values used in the numerical examples.}
	\label{fig:table2}
\end{figure}
Note that the values of $A_0$ and $b$ are only valid for the synthetic data sets; in the real data set in \autoref{fig:fig4} these values are provided.

\subsubsection{Denoising of synthetic data}
\label{subsubsec:syndenoising}
The \textbf{first example} is represented in \autoref{fig:fig1} and concerns denoising of a synthetic image in $W^{s,p}(\Omega,\SPDl_{36}(3))$. The motivation of the choice $z=36$ was explained in the previous \autoref{ssec:optimization}. 

The noisy image is obtained by adding Rician noise to the corresponding DWIs with $\sigma^2 = 40$ as described in \autoref{ssec:noisyData}.

The original image is shown in \autoref{sfig:sfig1-a}. In a column all ellipsoids have the same shape. In the first column the ellipsoids shown are sphere-shaped, i.e. all eigenvalues are equal with a value of $0.5 \cdot 10^{-3}$.  The fractional anisotropy (see \autoref{eq:FA}) is zero and hence these ellipsoids are colored black, see \autoref{fig:colorscale}. Going from the first column to the last one one eigenvalue is increasing from $0.5 \cdot 10^{-3}$ to $3.5 \cdot 10^{-3}$ while the other two stay constant. This leads to an increasing value of the fractional anisotropy and thus to a light blue coloring, see also \autoref{fig:colorscale}. The averaged value (over the column) of the increasing eigenvalue is plotted in black in \autoref{sfig:sfig1-f}.

The results obtained by using our metric double integral regularization (see \autoref{eq:reg}) can be seen in \autoref{sfig:sfig1-c} while the results using Sobolev-semi-norm regularization (see \autoref{eq:FS}) are illustrated in \autoref{sfig:sfig1-d} and \autoref{sfig:sfig1-e}. 
Our method removes the noise while the size of the ellipsoids stays close to the size of them in the original image.
This is in particular visible in \autoref{sfig:sfig1-f}, where the averaged size of the increasing eigenvalue is plotted in red.
Choosing the parameter $\beta$ in the Sobolev semi-norm regularization term too small results in a quite noisy image while a larger value of $\beta$ smooths the whole image which can be seen particularly on the left-hand-side where the ellipsoids are quite tiny. The smoothing effect is even more visible in \autoref{sfig:sfig1-f}.   

\begin{figure}[!h]
	\centering
\begin{subfigure}[h]{0.45\linewidth}
	\includegraphics[width=1\linewidth]{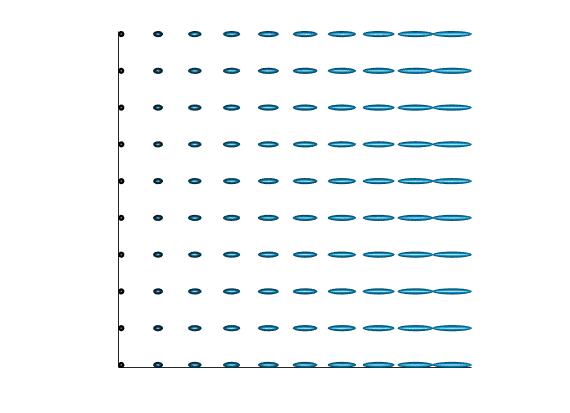}
	\caption{Original data.}
	\label{sfig:sfig1-a}
\end{subfigure}
\hspace{1em}
\begin{subfigure}[h]{0.45\linewidth}
	\includegraphics[width=1\linewidth]{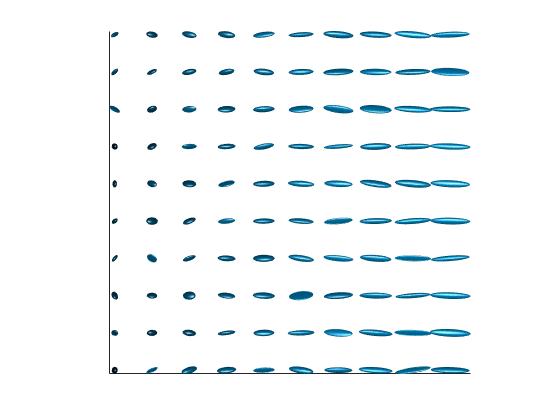}
	\caption{Noisy data using $\sigma^2 = 40$.}
	\label{sfig:sfig1-b}
\end{subfigure}

\begin{subfigure}[h]{0.45\linewidth}
	\includegraphics[width=1\linewidth]{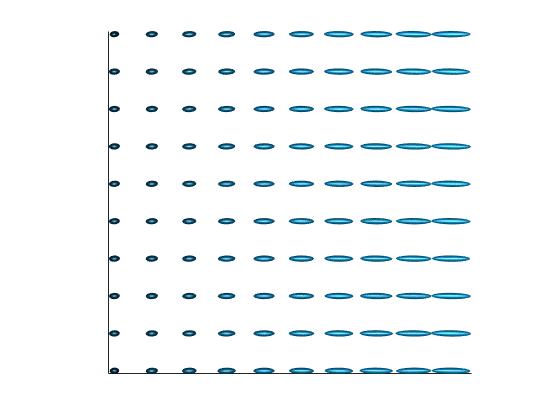}
	\caption{Result with metric double integral regularization with $\alpha=1$, $SNR = 21.02$.}
	\label{sfig:sfig1-c}
\end{subfigure}
\hspace{1em}
\begin{subfigure}[h]{0.45\linewidth}
	\includegraphics[width=1\linewidth]{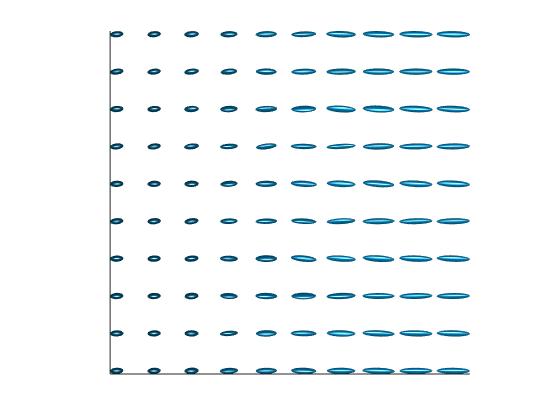}
	\caption{Sobolev semi-norm regularization with $\beta=2$, $SNR = 7.95$.}
	\label{sfig:sfig1-d}
\end{subfigure}

\begin{subfigure}[h]{0.45\linewidth}
	\includegraphics[width=1\linewidth]{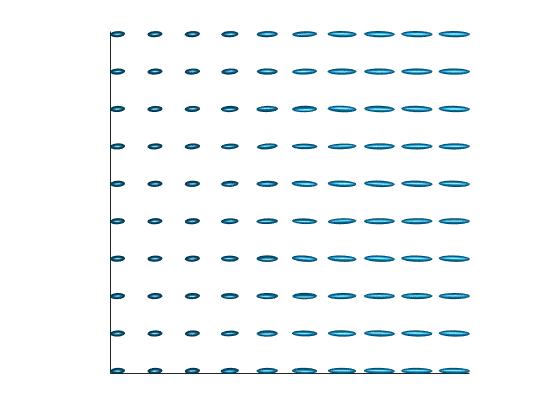}
	\caption{Sobolev semi-norm regularization with $\beta=3$, $SNR = 6.92$.}
	\label{sfig:sfig1-e}
\end{subfigure}
\hspace{1em}
\begin{subfigure}[h]{0.45\linewidth}
	\includegraphics[width=1\linewidth]{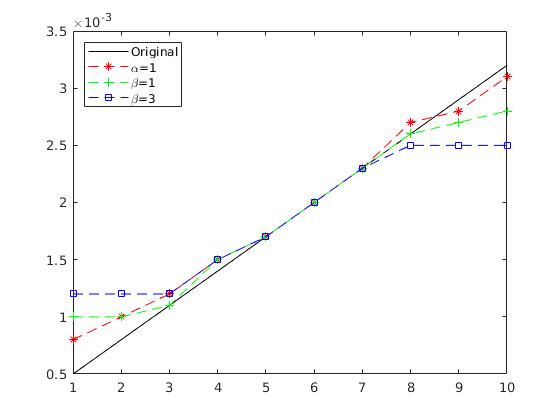}
	\caption{Averaged eigenvalue comparison. \newline}
	\label{sfig:sfig1-f}
\end{subfigure}
\caption{Denoising of a synthetic diffusion tensor image using $p=1.1, s=0.5, n_{\rho} = 3$ and different values of $\alpha$ and $\beta$.}
\label{fig:fig1}
\end{figure}

The \textbf{second denoising example} is shown in \autoref{fig:fig6}. It features one main direction of diffusion. The original image in $W^{s,p}(\Omega,\SPDl_{36}(3))$ is presented in \autoref{sfig:sfig6-a} while the noisy version of it (using $\sigma^2 = 90$) can be seen in \autoref{sfig:sfig6-b}.
Again the size of the ellipsoids in each direction is as before around $10^{-3}$.

Using our regularization method, see \autoref{sfig:sfig6-c}, the noise in all areas is removed while the main direction of diffusion is recognizable. In contrast to this stands the result obtained by using the comparison functional in \autoref{eq:FS}, see \autoref{sfig:sfig6-d}. The main direction is barely visible and noise remains, in particular in regions with tiny ellipsoids. 
Because the size of the ellipsoids is rather small the main contribution in the Sobolev semi-norm regularization is due to the change of size between the larger and smaller diffusion tensors. This leads to the smoothing of the whole image. Furthermore, very tiny ellipsoids barely influence the regularization term which results in the remaining noise.
Compared to that our functional using the log-Euclidean metric results in a completely different behavior. In particular, in this case changes between the small ellipsoids contribute even more than the change of size.

\begin{figure}[!h]
	\centering
	\begin{subfigure}[h]{0.45\linewidth}
		\includegraphics[width=1\linewidth]{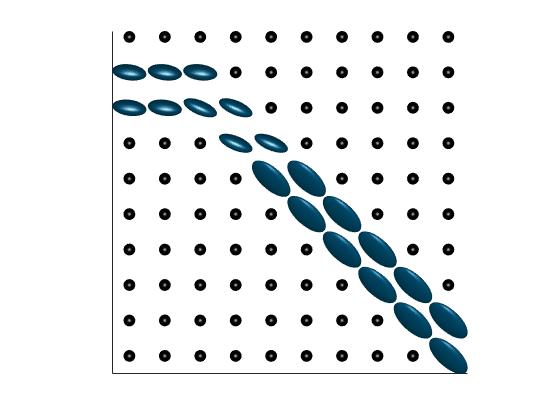}
		\caption{Original data. }
		\label{sfig:sfig6-a}
	\end{subfigure}
	\hspace{1em}
	\begin{subfigure}[h]{0.45\linewidth}
		\includegraphics[width=1\linewidth]{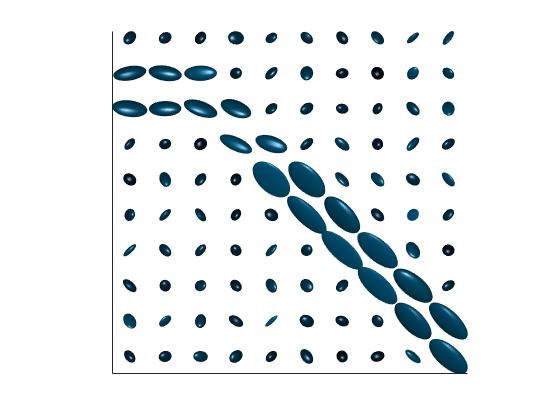}
		\caption{Noisy data using $\sigma^2 = 90$.}
		\label{sfig:sfig6-b}
	\end{subfigure}
	
	\begin{subfigure}[h]{0.45\linewidth}
		\includegraphics[width=1\linewidth]{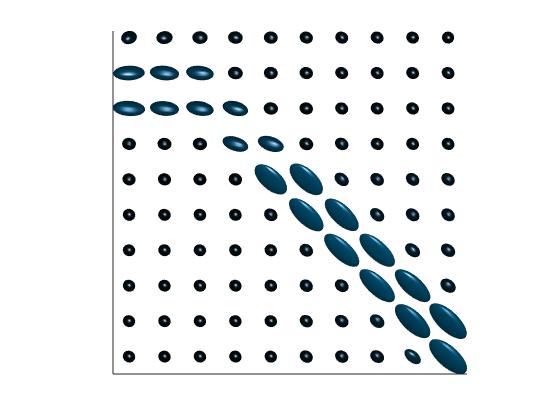}
		\caption{Result with metric double integral regularization with $\alpha=0.3$, $SNR = 7.99$.}
		\label{sfig:sfig6-c}
	\end{subfigure}
	\hspace{1em}
	\begin{subfigure}[h]{0.45\linewidth}
		\includegraphics[width=1\linewidth]{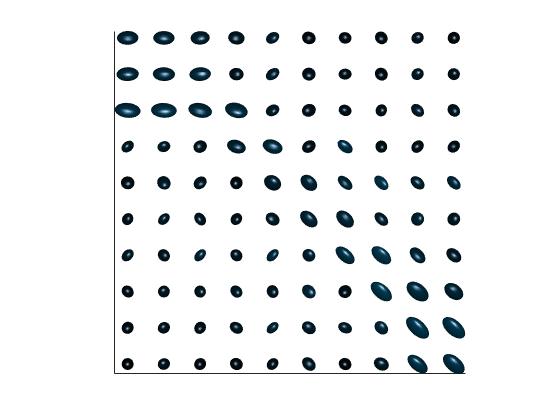}
		\caption{Sobolev semi-norm regularization with $\beta=1$, $SNR = 3.11$.}
		\label{sfig:sfig6-d}
	\end{subfigure}
	\caption{Denoising of a synthetic diffusion tensor image using $p=1.1, s=0.5, n_{\rho} = 2, \alpha = 0.3$ and $\beta = 1$.}
	\label{fig:fig6}
\end{figure}

\subsubsection{Influence of parameters $p$ and $n_\rho$}
\label{subsubsec:influence}
In this section we briefly go into more detail on the influence of the two parameters $p$ and $n_{\rho}$. We again consider the example from the previous section which features one main direction of diffusion. The original image  in $W^{s,p}(\Omega,\SPDl_{36}(3))$ is shown in \autoref{sfig:sfig8-a}. In all images we use $n_{\rho} =9$, i.e. a mollifier that has non-zero support on nine neighbouring pixels. Moreover we chose the same values of $\alpha = 0.3$ and $s=0.5$ as before.\\
In \autoref{sfig:sfig8-b} the parameter $p$ is set to $1.1$, i.e. as in the previous example in \autoref{fig:fig6}. As expected an enlargement of the support of the mollifier leads to a much smoother image in total. In \autoref{sfig:sfig8-c} and \autoref{sfig:sfig8-d} we chose $p=1.001$ and $p=2.1$, respectively. We see that for $p=1.0001$ parts of the main direction of diffusion are still recognizable, i.e. the results stays more close to the original image even when using a mollifier with large support. Increasing the value of $p$ smoothes the whole image, which we also see in \autoref{sfig:sfig8-d}. By an appropriate adaption of the other parameters $s$ and $\alpha$ this effect could possibly be reduced.

\begin{figure}[!h]
	\centering
	\begin{subfigure}[h]{0.45\linewidth}
		\includegraphics[width=1\linewidth]{mainDir/orig.jpg}
		\caption{Original data. }
		\label{sfig:sfig8-a}
	\end{subfigure}
	\hspace{1em}
	\begin{subfigure}[h]{0.45\linewidth}
		\includegraphics[width=1\linewidth]{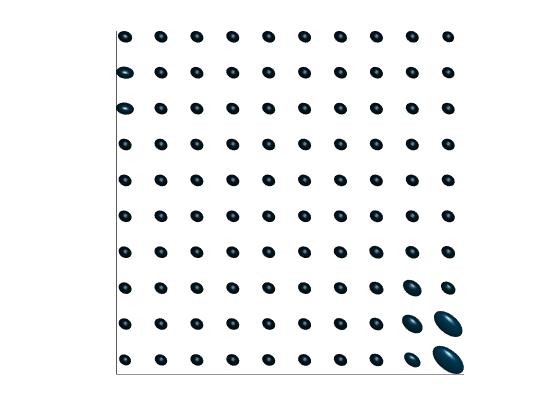}
		\caption{Denoised data using $p=1.1$ and $n_{\rho} = 9$.}
		\label{sfig:sfig8-b}
	\end{subfigure}
	
	\begin{subfigure}[h]{0.45\linewidth}
		\includegraphics[width=1\linewidth]{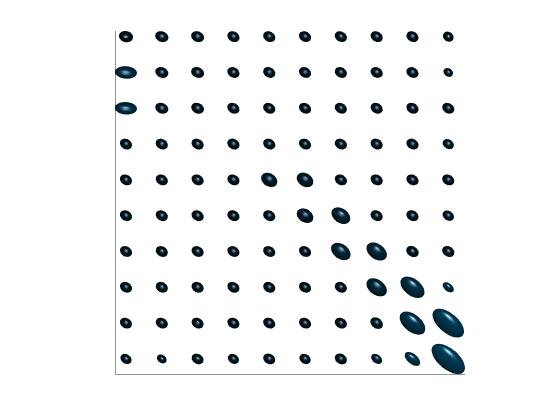}
		\caption{Denoised data using $p=1.0001$ and $n_{\rho} = 9$.}
		\label{sfig:sfig8-c}
	\end{subfigure}
	\hspace{1em}
	\begin{subfigure}[h]{0.45\linewidth}
		\includegraphics[width=1\linewidth]{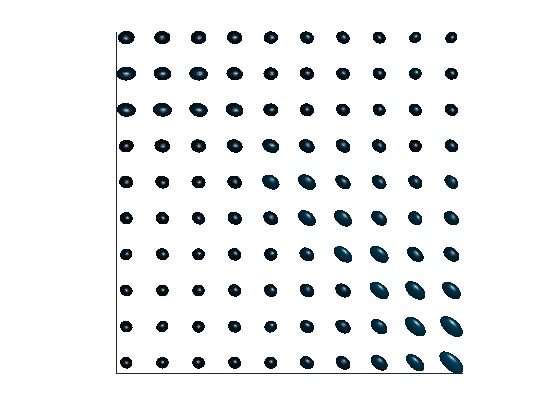}
		\caption{Denoised data using $p=2.1$ and $n_{\rho} = 9$.}
		\label{sfig:sfig8-d}
	\end{subfigure}
	\caption{Influence of different values of $p, s=0.5, \alpha=0.3$ using a mollifier with a broader support with $n_{\rho} = 9$.}
	\label{fig:fig8}
\end{figure}

\subsubsection{Inpainting of synthetic data}
\label{subsubsec:syninpainting}
We now come to two examples of diffusion tensor inpainting for functions in $W^{s,p}(\Omega,\SPDl_{36}(3))$. We thus minimize the functional \autoref{eq:reg}, with $D \neq \emptyset$, which denotes the inpainting domain. 

The \textbf{first example}, where the ground truth is represented in \autoref{sfig:sfig7-a} has one main diffusion direction. 
The noisy image in \autoref{sfig:sfig7-b} is obtained as described in \autoref{ssec:noisyData}  with variance $\sigma^2 = 90$. 
The area $D$ to be inpainted consists of the missing ellipsoids in the noisy data. As input data for our algorithm we use the incomplete noisy data (as shown in \autoref{sfig:sfig7-b}) where we replaced the missing ellipsoids (described by the null matrix $0_n$) by its projection $\Proj(0_n)$, as defined in \autoref{eq:P}.

The result using our metric double integral regularization method can be seen in \autoref{sfig:sfig7-c}. The main diffusion direction is recognizable even though the size of the ellipsoids near the kink is now approximately the same. Noise, which was in particular present in the tiny ellipsoids, is removed because of the use of the log-Euclidean metric in our functional. Small values thus gain a high contribution.
The result using the comparison functional in \autoref{eq:FS} is shown in \autoref{sfig:sfig7-d}. The noise is removed but it is barely possible to recognize the main diffusion direction. The whole image is smoothed. Choosing $\beta$ even smaller the influence of the regularizer tends to zero yielding a result close to the starting image.

\begin{figure}[!h]
	\centering
	\begin{subfigure}[h]{0.45\linewidth}
		\includegraphics[width=1\linewidth]{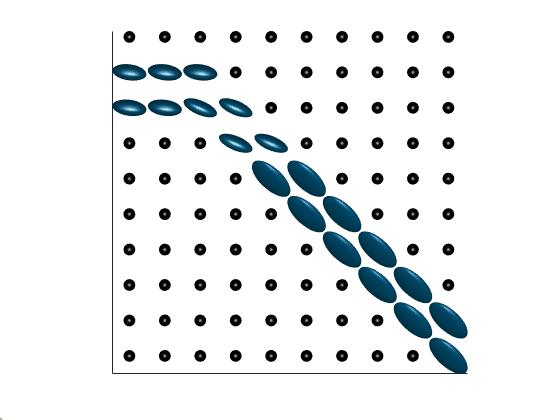}
		\caption{Original data}
		\label{sfig:sfig7-a}
	\end{subfigure}
	\hspace{1em}
	\begin{subfigure}[h]{0.45\linewidth}
		\includegraphics[width=1\linewidth]{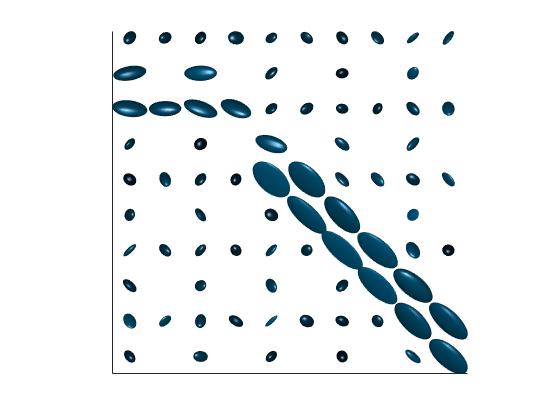}
		\caption{Noisy data to be inpainted. $\sigma^2 = 90$.}
		\label{sfig:sfig7-b}
	\end{subfigure}
	
	\begin{subfigure}[h]{0.45\linewidth}
		\includegraphics[width=1\linewidth]{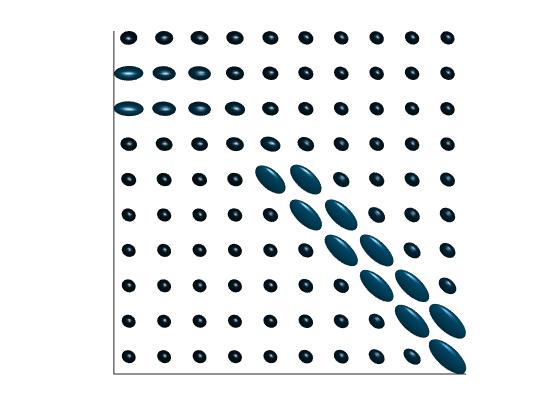}
		\caption{Result with metric double integral regularization with $\alpha=0.5$, $SNR = 4.59$.}
		\label{sfig:sfig7-c}
	\end{subfigure}
	\hspace{1em}
	\begin{subfigure}[h]{0.45\linewidth}
		\includegraphics[width=1\linewidth]{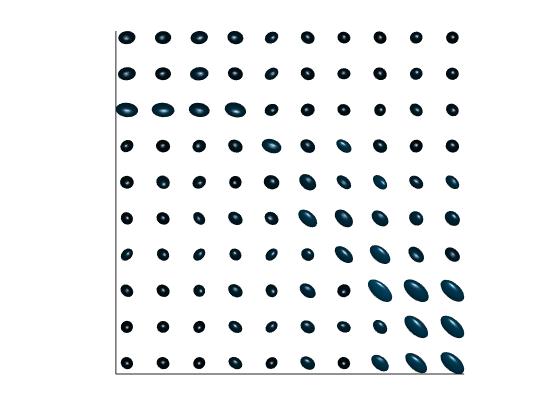}
		\caption{Sobolev semi-norm regularization with $\beta=1$, $SNR = 3.02$.}
		\label{sfig:sfig7-d}
	\end{subfigure}
	\caption{Inpainting of a synthetic diffusion tensor image using $p=1.1, s=0.5, n_{\rho} = 2, \alpha = 0.5$ and $\beta = 1$.}
	\label{fig:fig7}
\end{figure}

As \textbf{second example} we consider the data shown in \autoref{fig:fig3}. The original data is illustrated in \autoref{sfig:sfig3-a}, the noisy one using $\sigma^2 = 40$ in \autoref{sfig:sfig3-b}. This serves as initial data for our minimizing algorithm. The area to be inpainted, $D$, can be seen in \autoref{sfig:sfig3-c}: it consists of the square of missing ellipsoids in the middle.

Using our regularization functional results in \autoref{sfig:sfig3-d}. Using the Sobolev semi-norm regularization with different values of $\beta$ gives \autoref{sfig:sfig3-e} and \autoref{sfig:sfig3-f}. Our result is more balanced concerning noise removal and keeping the inpainted area, in particular the size of the ellipsoids, close to the ground truth data. This is also visible in the value of the $SNR$. When minimizing the comparison functional in \autoref{eq:FS} with a small value of the regularization parameter $\beta$ the size of the ellipsoids is matched well but noise remains. Increasing of $\beta$ leads to a better noise removal with a simultaneous smoothing of the whole image. 

\begin{figure}[!h]
	\centering
	\begin{subfigure}[h]{0.37\linewidth}
		\includegraphics[width=1\linewidth]{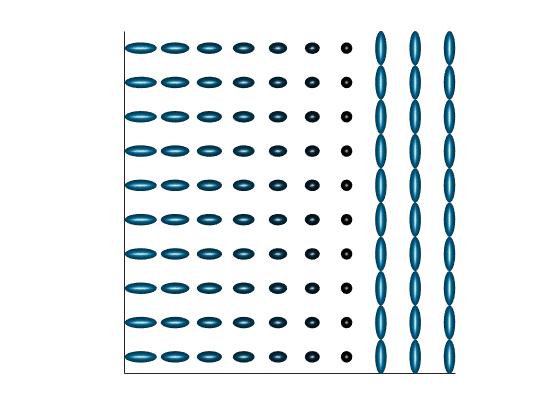}
		\caption{Original data.}
		\label{sfig:sfig3-a}
	\end{subfigure}
	\hspace{1em}
	\begin{subfigure}[h]{0.37\linewidth}
		\includegraphics[width=1\linewidth]{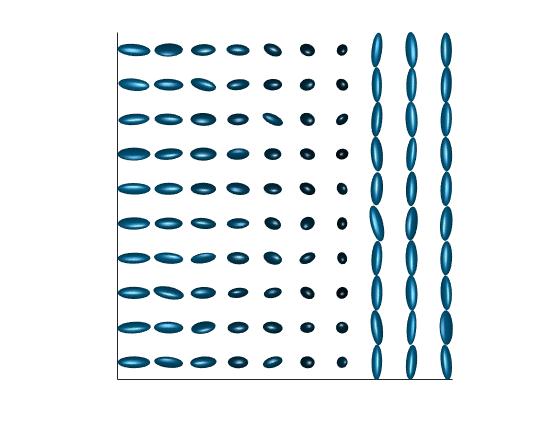}
		\caption{Noisy data. Initial image.}
		\label{sfig:sfig3-b}
	\end{subfigure}
	
	\begin{subfigure}[h]{0.37\linewidth}
		\includegraphics[width=1\linewidth]{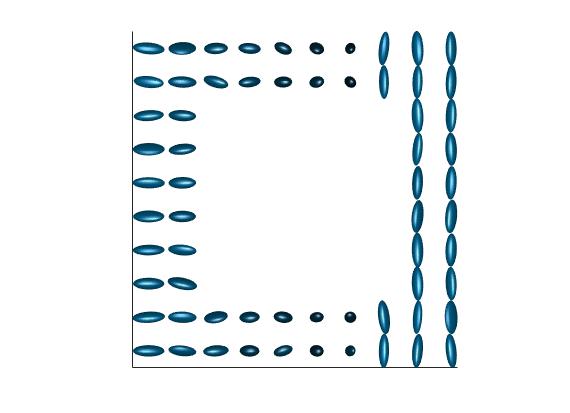}
		\caption{Noisy data to be inpainting.\newline }
		\label{sfig:sfig3-c}
	\end{subfigure}
	\hspace{1em}
	\begin{subfigure}[h]{0.37\linewidth}
		\includegraphics[width=1\linewidth]{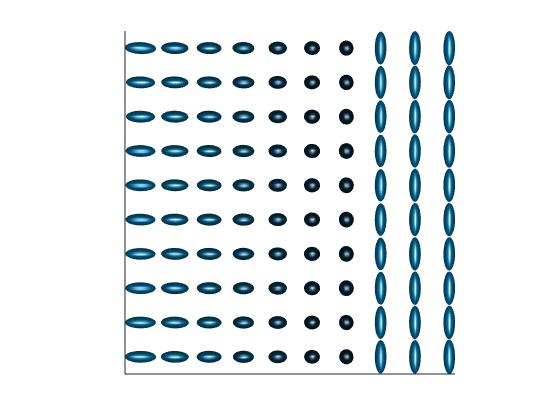}
		\caption{Result with metric double integral regularization with $\alpha=0.5$, $snr = 15.69$.}
		\label{sfig:sfig3-d}
	\end{subfigure}

    \begin{subfigure}[h]{0.37\linewidth}
    	\includegraphics[width=1\linewidth]{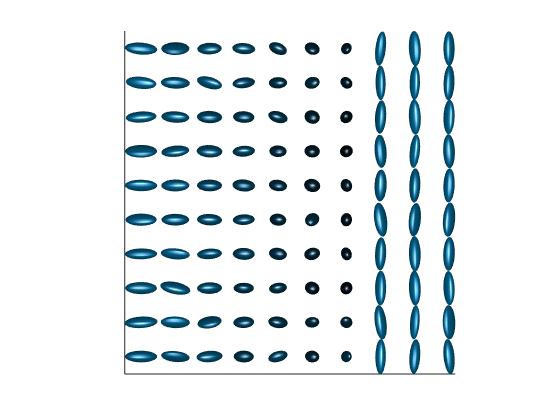}
    	\caption{Sobolev semi-norm regularization with $\beta=0.5$, $SNR = 9.82$.}
    	\label{sfig:sfig3-e}
    \end{subfigure}
    \hspace{1em}
    \begin{subfigure}[h]{0.37\linewidth}
    	\includegraphics[width=1\linewidth]{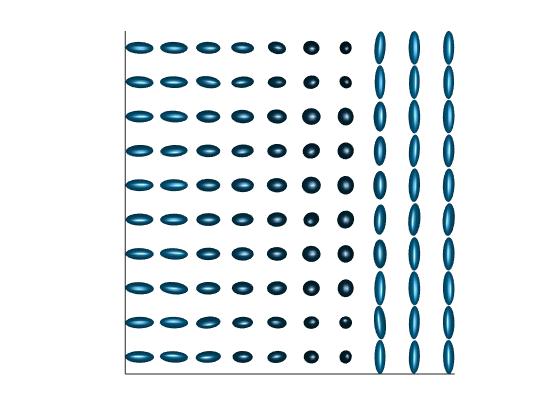}
    	\caption{Sobolev semi norm regularization with $\beta=1$, $SNR = 9.27$.}
    	\label{sfig:sfig3-f}
    \end{subfigure}
	\caption{Inpainting of a synthetic diffusion tensor image using $p=1.1, s=0.5, n_{\rho} = 2, \alpha = 0.5$ and $\beta = 0.5$ and $\beta = 1$, respectively.}
	\label{fig:fig3}
\end{figure}

\subsubsection{Denoising of DTMRI data}
\label{subsubsec:realdenoising}
In this last subsection we present an example for denoising of a real DTMRI image.
The original data are taken from \cite{CabAndBasMai}, which is freely accessible. In this example (parts of) the 39th slice are shown. 
Noise is added with $\sigma^2 = 0.05$. 

In \autoref{sfig:sfig4-c}, \autoref{sfig:sfig4-e} and \autoref{sfig:sfig4-d}, \autoref{sfig:sfig4-f}, respectively, parts of the whole image in \autoref{sfig:sfig4-a} and \autoref{sfig:sfig4-b}, respectively, are shown. The denoised results using our regularization method can be seen in \autoref{sfig:sfig4-g} and \autoref{sfig:sfig4-h}, respectively.
In \autoref{sfig:sfig4-g} we see that the structure and sizes of the ellipsoids are preserved. Nevertheless, noise is still visible in some parts. Increasing the regularization parameter $\alpha$ further leads to more noise removal accompanied by a swelling in particular of those ellipsoids  in the middle of the image which have one eigenvalue close to zero.
In \autoref{sfig:sfig4-h} this effect is visible. Here noise is removed well and the main structures are preserved but there is a swelling of some ellipsoids.

\begin{figure}[!h]
	\centering
	\begin{subfigure}[h]{0.49\linewidth}
		\includegraphics[width=1\linewidth]{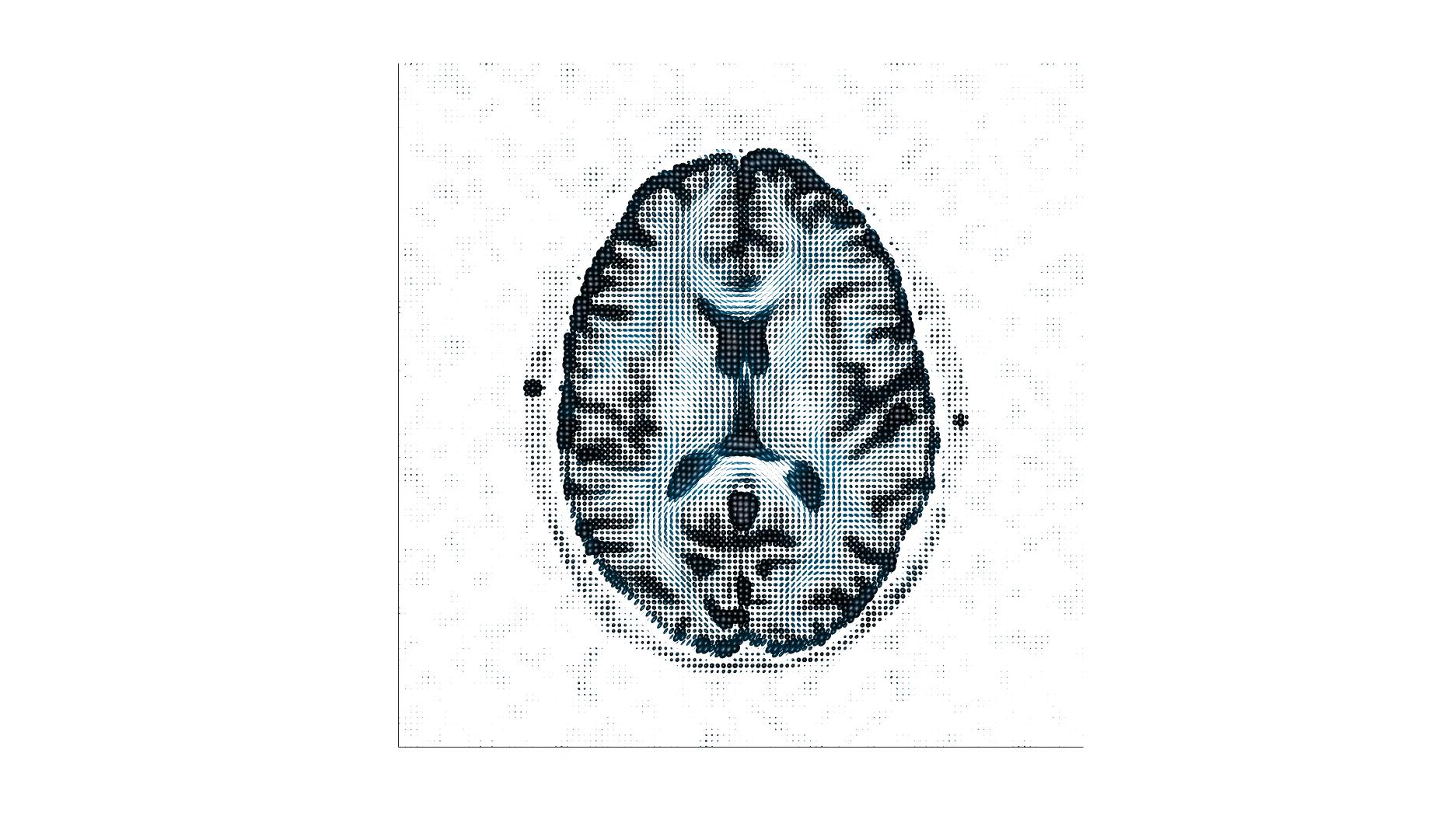}
		\caption{Original data.}
		\label{sfig:sfig4-a}
	\end{subfigure} 
	\begin{subfigure}[h]{0.49\linewidth}
		\includegraphics[width=1\linewidth]{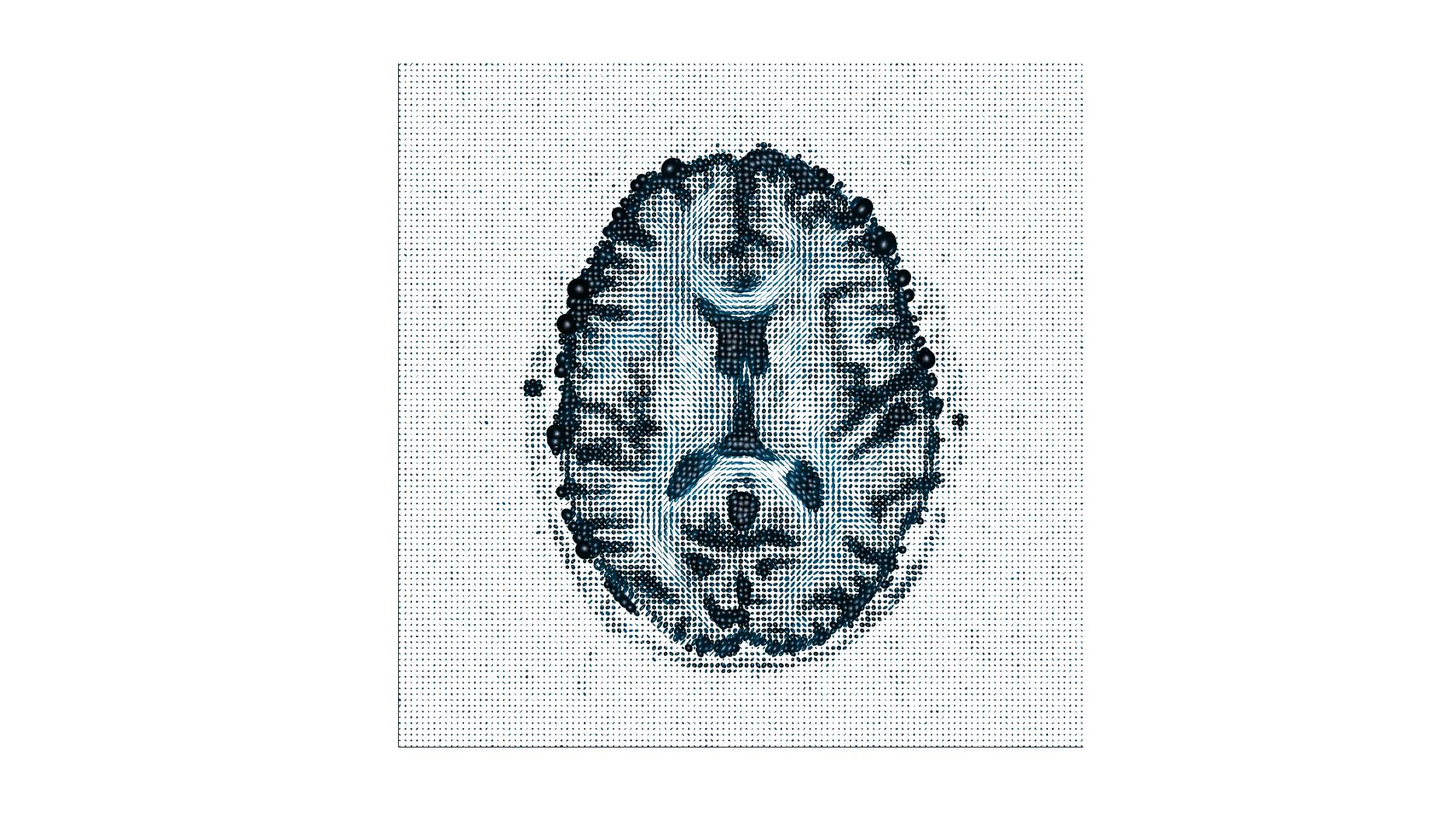}
		\caption{Noisy data.}
		\label{sfig:sfig4-b}
	\end{subfigure}
	
	\begin{subfigure}[h]{0.49\linewidth}
		\includegraphics[width=1\linewidth]{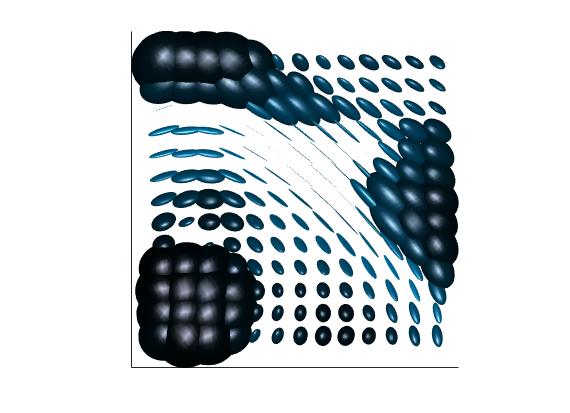}
		\caption{(Part of) original data.}
		\label{sfig:sfig4-c}
	\end{subfigure}
    \begin{subfigure}[h]{0.49\linewidth}
    	\includegraphics[width=1\linewidth]{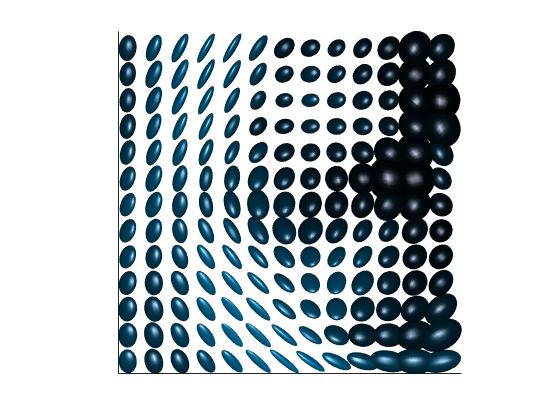}
    	\caption{(Part of) original data.}
    	\label{sfig:sfig4-d}
    \end{subfigure}

	\begin{subfigure}[h]{0.49\linewidth}
		\includegraphics[width=1\linewidth]{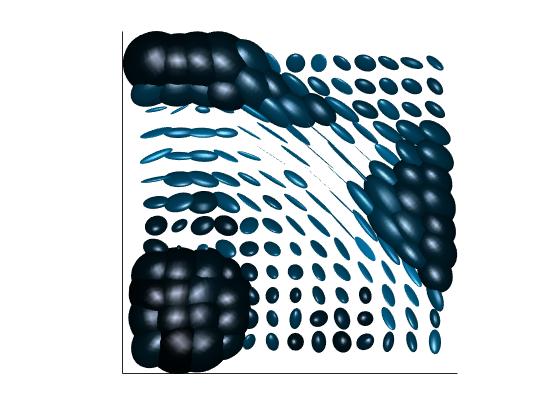}
		\caption{(Part of) noisy data, $\sigma^2 = 0.05, SNR = 7.21$ }
		\label{sfig:sfig4-e}
	\end{subfigure}
    \begin{subfigure}[h]{0.49\linewidth}
    	\includegraphics[width=1\linewidth]{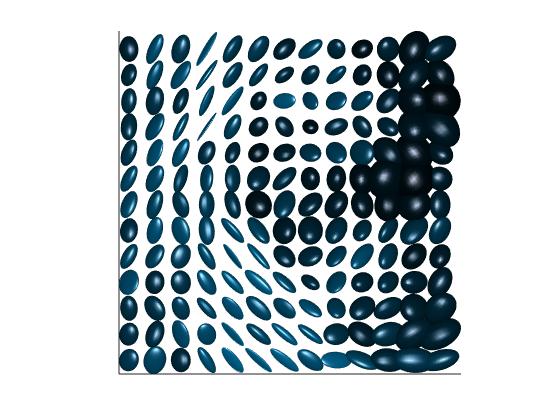}
    	\caption{(Part of) noisy data, $\sigma^2 = 0.05, SNR = 5.85$ }
    	\label{sfig:sfig4-f}
    \end{subfigure}

	\begin{subfigure}[h]{0.49\linewidth}
		\includegraphics[width=1\linewidth]{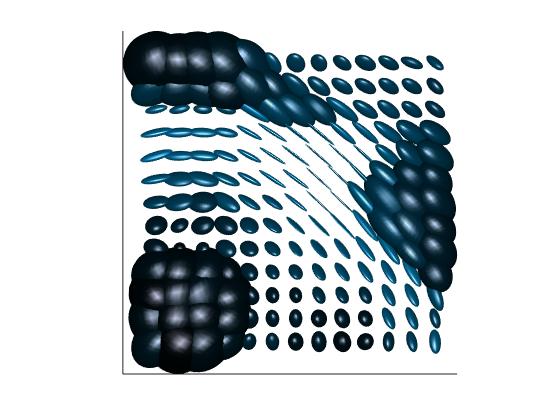}
		\caption{Result with metric double integral regularization with $\alpha=0.7$, $SNR = 8.38$.}
		\label{sfig:sfig4-g}
	\end{subfigure}
    \begin{subfigure}[h]{0.49\linewidth}
    	\includegraphics[width=1\linewidth]{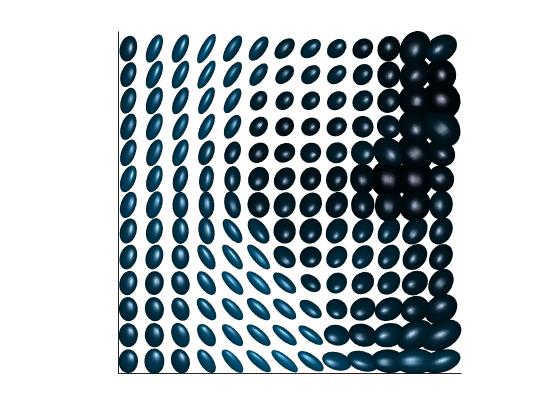}
    	\caption{Result with metric double integral regularization with $\alpha=0.5$, $SNR = 8.64$.}
    	\label{sfig:sfig4-h}
    \end{subfigure}	
	\caption{Denoising of real data taken from \cite{CabAndBasMai} using $p = 1.1, s = 0.1, \alpha = 0.7$ and $\alpha = 0.5$, respectively, and $n_{\rho}=1$.}
	\label{fig:fig4}
\end{figure}

\color{black}
\subsection{Conclusion}
The contribution of this paper is the application of recently developed derivative-free, metric double integral regularization methods for denoising of diffusion tensor imaging data.
The analysis is based on recent work \cite{CiaMelSch19} but completed by a uniqueness result for the minimizer of the regularization functional. In order to derive the analytical 
result we require differential geometric results on sets of positive definite, symmetric matrices. We also demonstrate the effectiveness of the approach by some synthetic and DTMRI data.

\subsection*{Acknowledgements}
MM and OS are supported by the Austrian Science Fund (FWF), with Project 
I3661-N27 (Novel Error Measures and Source Conditions of Regularization Methods 
for Inverse Problems).
Moreover, OS is also by the FWF within the special research initiative SFB F68, 
project F6807-N36 (Tomography with Uncertainties). 

\section*{References}
\renewcommand{\i}{\ii}
\printbibliography[heading=none]


\end{document}